\newtheorem{theorem}{Theorem}[section]
\newtheorem{lemma}[theorem]{Lemma}
\newtheorem{proposition}[theorem]{Proposition}
\newtheorem{corollary}[theorem]{Corollary}
\theoremstyle{definition}
\newtheorem{definition}[theorem]{Definition}
\theoremstyle{remark}
\numberwithin{equation}{section}
\newcommand{\ba}{\begin{array}}
\newcommand{\ea}{\end{array}}
\newcommand{\Om}{\Omega}
\newcommand{\la}{\lambda}
\newcommand{\R}{{\mathbb{R}}}
\newcommand{\ds}{\displaystyle}
\begin{document}
\date{}
\title{ \bf\large{Existence and stability of steady state solutions of reaction-diffusion equations with nonlocal delay effect}\footnote{Partially supported by the NSFC of China (No.11671236), the Natural Science Foundation of Shandong Province of China (No. ZR2019MA006), the Fundamental Research Funds for the Central Universities (No.19CX02055A), China Scholarship Council and US-NSF grants DMS-1715651 and DMS-1853598.}}
\author{Wenjie Zuo\textsuperscript{1}\ \ Junping Shi\textsuperscript{2}\footnote{Corresponding author, Email: jxshix@wm.edu}
 \\
{\small \textsuperscript{1} Department of Mathematics, China University of Petroleum (East China),\hfill{\ }}\\
\ \ {\small Qingdao, Shandong,
266580, P.R.China\hfill{\ }}\\
{\small \textsuperscript{2} Department of Mathematics, William \& Mary,\hfill{\ }}\\
\ \ {\small Williamsburg, Virginia, 23187-8795, USA\hfill {\ }}}
\maketitle
\begin{abstract}
{A general reaction-diffusion equation with spatiotemporal delay and homogeneous Dirichlet boundary condition is considered. The existence and stability of positive steady state solutions  are proved via studying an equivalent reaction-diffusion system without nonlocal and delay structure and applying local and global bifurcation theory. The global structure of the set of steady states is characterized according to type of nonlinearities and diffusion coefficient. Our general results are applied to diffusive logistic growth models and Nicholson's blowflies type models. }

 \noindent{\emph{Keywords}}: Reaction-diffusion equation; Spatiotemporal delay; Dirichlet boundary condition; Stability; Global  bifurcation.
\end{abstract}

\section {Introduction}
Reaction-diffusion models have been used to describe the evolution of population density in biological or chemical problems, and the qualitative behavior of solutions to the models can be used to predict outcomes of natural or engineered biochemical events. Typical long term behavior of the models are the convergence to steady state solutions or time-periodic orbits, or formation of some particular spatiotemporal patterns. The reaction dynamics of the models often depends on the system states of past time, which induces time delays in the model equations. Realistic time delay terms in the model distribute over all past time, and due to the spatial structure and the diffusive nature of population, the time delay is also nonlocal over the space.

In this paper, we consider a general reaction-diffusion model with spatiotemporal nonlocal delay effect and Dirichlet boundary conditions:
\begin{equation}
  \label{1}
  \begin{cases}
    u_t(x,t)=d\Delta u(x,t)+F(\lambda, u(x,t),(g\ast\ast H(u))(x,t)),&x\in\Omega,~t>0,\\
    u(x,t)=0,&x\in\partial\Omega,~t>0,\\
    u(x,t)=\eta(x,t),&x\in\Omega,~t\in(-\infty,0],
  \end{cases}
\end{equation}
where $u(x,t)$ is the population density at time $t$ and location $x\in\Omega\subset \R^n$, $d>0$ is the diffusion coefficient, and the initial condition is assumed to be given for all past time; $F(\la, u, v)$ is a  nonlinear function depending on a parameter $\la$, the local population density $u(x,t)$, and a variable $v(x,t)$ representing past state of population density. Here the
past state of population density $v(x,t)$ is given by a form
\begin{equation}\label{3}
  v(x,t)=(g\ast\ast H(u))(x,t)=\int_{-\infty}^t\int_{\Omega}G(x,y,t-s)g(t-s)H(u(y,s))dyds,
\end{equation}
where the spatial weighing function $G(x,y,t-s)$ means the probability that an individual in location $y$ moves to location $x$ at a past time $t-s$, the temporal weighing function $g(t-s)$ characterizes the weight of past time $t-s$ in the entire past, and $H$ is a function of the state variable $u$. Here $G:\Omega\times \Omega\times (0,\infty)\to \R$ is a (generalized)  function or measure  and $g:[0,\infty)\to \R^+$ is a probability distribution function satisfying
\begin{equation}\label{1aa}
  \int_{\Omega} G(x,y,t) dy=1, \;\; x\in \Omega, \; t>0, \;\;\;\text{ and } \;\; \int_{0}^{\infty} g(t) dt=1.
\end{equation}
The nonlocal distributed delay term $g\ast\ast H(u)$ is a spatiotemporal average of the past state of density function $u$. Such nonlocal delay effect was first introduced in \cite{Britton1990} when $\Omega=\R^n$, and in \cite{Gourley2002}  when $\Omega$ is a bounded domain. See \cite{Gourley2004,Gourley2006,WangLiRuan2006} for more detailed explanation of the nonlocal delay in the population models.

In this paper, we assume that $G(x,y,t)$ is the Green's function of diffusion equation with Dirichlet boundary condition:
\begin{equation}\label{4}
  G(x,y,t)=\sum_{n=1}^{\infty}e^{-d\lambda_nt}\phi_n(x)\phi_n(y),
\end{equation}
where $\la_n$ is the $n$-th eigenvalue of  the following eigenvalue problem
\begin{equation*}\label{16}
  \begin{cases}
    -\Delta \phi(x)=\lambda \phi(x),&x\in\Omega,\\
    \phi(x)=0,&x\in\partial\Omega,
  \end{cases}
\end{equation*}
such that
\begin{equation*}
  0<\lambda_1\leq\lambda_2\leq\cdots\leq\lambda_n\leq\cdots\rightarrow+\infty,~\text{as}~n\rightarrow\infty,
\end{equation*}
and  $\phi_n(x)$ is the corresponding eigenfunction of $\lambda_n$ normalized so that \eqref{1aa} is satisfied. This assumption is consistent with the diffusive behavior of the population in the past time. On the other hand, the temporal distribution function is chosen to be
\begin{equation}\label{eq5}
  g_w(t)=\frac{1}{\tau}e^{-\frac{t}{\tau}},~ g_s(t)=\frac{t}{\tau^2}e^{-\frac{t}{\tau}},
\end{equation}
which are referred as weak kernel and strong kernel. When $G$ and $g$ take the forms in \eqref{4} and \eqref{eq5}, the model \eqref{1} is equivalent to a system of reaction-diffusion equations without nonlocal and delay effect (the precise equivalence is described in Section 2). For example, when the weak kernel is used, the new equivalent system is
\begin{equation}\label{388}
\begin{cases}
    u_t(x,t)=d\Delta u(x,t)+F(\lambda,u(x,t),v(x,t)),&x\in\Omega,~t>0,\\
   \ds v_t(x,t)=d\Delta v(x,t)+\frac{1}{\tau}(H(u(x,t))-v(x,t)),&x\in\Omega,~t>0,\\
   u(x,t)=v(x,t)=0,&x\in\partial\Omega,~t>0.
 \end{cases}
\end{equation}
We use established techniques for classical reaction-diffusion systems such as local and global bifurcation theory, linear stability analysis, nonlinear elliptic equations, and \textit{a priori} estimates to study \eqref{388}, which in turn provides information on
steady state solutions and dynamical behavior of reaction-diffusion equation with nonlocal delay effect \eqref{1}. Our results assume general form of the nonlinear functions $F$ and $H$, hence they can be applied to a wide variety of population growth models in the literature. In particular, we demonstrate our result by applying them to logistic type models \cite{Britton1990}, and Nicholson's blowflies type models \cite{WangLiRuan2006}.

Our results can be compared to a vast body of previous work on \eqref{1} with other choices of $G$ and $g$ as well as other boundary conditions. The spatiotemporal kernel $G$ can take the form: (A) $\delta(x-y)$ (local); (B) $K(x,y)$ (spatial); or (C) the one in \eqref{4} (diffusion). Special examples of (B) include: (B1) Green's function of stationary diffusion operator $-d\Delta +\mu$; or (B2) constant function. The delay distribution function $g$ can take the form: (a) $\delta(t-\tau)$ (discrete delay); or (b) $g_n(t)=\ds \frac{t^n e^{-t/\tau}}{\tau^{n+1}\Gamma(n+1)}$ (Gamma function of order $n$). Note that $g_w$ and $g_s$ defined in \eqref{eq5} are the Gamma function of order $0$ and $1$. Finally the boundary conditions can be: ($\alpha$) Dirichlet $u=0$; ($\beta$) Neumann $\ds\frac{\partial u}{\partial n}=0$; or ($\gamma$) periodic on $\R^n$. Various combinations of $G$, $g$ and boundary conditions have been used for \eqref{1}, and Table \ref{tab1} gives a partial list of references which consider \eqref{1} with these different choices of kernel functions and boundary conditions.

\begin{table}
\centering
\begin{tabular}{|c|c|c||c|c|c||c|c|c|}
  \hline
  ($\alpha$) & (a) & (b) & ($\beta$) & (a) & (b) & ($\gamma$) & (a) & (b) \\
   \hline
  (A) & \cite{BusenbergHuang1996,GreenStech1981,So1998,Su2009,SuWeiShi2012,YanLi2010,Yi2013} & \cite{Huang1998,Parrott1992,Shi2017} &(A)  & \cite{Memory1989,Shi2019,Yang1998,Yi2008,Yi2010,Yoshida1982} & \cite{Deng2015,GourleyRuan2000,ZuoSong2015} & (A)  &  &  \\
  (B) & \cite{ChenShi2012,ChenYu2016JDE,Guo2015,Guo2016,Yi2013}  &  & (B) & \cite{Ni2018} &  & (B) & \cite{Britton1989} &  \\
  (C) &  & \cite{ChenYuJDDE2016,Gourley2002} & (C) &  & \cite{Gourley2002,Su2014,ZuoSong2015b} & (C) &  & \cite{Britton1990}  \\
  \hline
\end{tabular}
\caption{References on dynamics of \eqref{1} with different combinations of $G$, $g$ and boundary conditions.}\label{tab1}
\end{table}

When the spatiotemporal kernel $G$ is a delta function $\delta(x-y)$ as type (A), the system \eqref{1} is spatially local. For discrete type delay (a), it has been shown that for Neumann boundary value problem, the positive steady state solution loses its stability via a Hopf bifurcation when the delay $\tau$ is large \cite{Memory1989,Shi2019,Yoshida1982}, while the same phenomenon is also proved for small amplitude positive steady state for Dirichlet boundary value problem \cite{BusenbergHuang1996,Su2009,SuWeiShi2012,YanLi2010}. A temporally oscillatory solution emerges from the Hopf bifurcation, and this solution is spatially non-homogeneous under Dirichelt boundary condition \cite{BusenbergHuang1996,Su2009,SuWeiShi2012,YanLi2010} or with spatial heterogeneity \cite{Shi2019}. Similar Hopf bifurcation and temporally oscillatory solution are also found when the delay is distributed one as type (b) \cite{GourleyRuan2000,Shi2017,ZuoSong2015}. When the kernel function $G$ is a spatial one as type (B), the system \eqref{1} is a nonlocal one. For discrete delay (a) and Dirichlet boundary condition, Hopf bifurcation and spatially non-homogeneous oscillatory solution bifurcating from small amplitude positive steady state have also been founded \cite{ChenShi2012,ChenYu2016JDE,Guo2015,Guo2016}. The rigorous proof of Hopf bifurcation and spatially non-homogeneous oscillatory solution bifurcating from large amplitude positive steady state remains an open question, although numerically it has been found in many cases.

For the diffusion kernel defined in \eqref{4} (C) and Gamma distribution function (b), it is found under Dirichlet boundary condition that the small amplitude positive steady state does not undergo Hopf bifurcation and it remains stable for $\tau>0$ \cite{ChenYuJDDE2016}. Same result holds for Neumann boundary condition and weak kernel, but Hopf bifurcation occurs for  Neumann boundary condition and strong kernel \cite{ZuoSong2015b}. This paper also considers the Dirichlet diffusion kernel defined in \eqref{4} (C) and weak kernel, and we show that for fixed $\tau>0$, the bifurcating positive steady state solution is usually locally asymptotically stable for $d\in (d^*(\tau)-\epsilon(\tau),d^*(\tau))$, where $d^*(\tau)$ is the bifurcation point and $\epsilon(\tau)$ is a small constant depending on $\tau$. So our results here again confirm the nonoccurrence of Hopf bifurcation for the diffusion kernel case and weak distribution kernel as indicated in \cite{ChenYuJDDE2016,ZuoSong2015b}. The results in this paper take an entirely different approach based on the equivalent system \eqref{388} and theory of semilinear elliptic systems, and it also holds for much general setting compared to the ones in \cite{ChenYuJDDE2016,ZuoSong2015b}. Some of our existence, stability and uniqueness results are of global nature (see Section 5 and 6).

Equation \eqref{1} has also been used to model biological invasion or spreading behavior, and traveling wave solutions of \eqref{1} with varies choices of $G$ and $g$ have been considered in,  for example, \cite{Ai2007,Ashwin2002,Gourley2000,Ma2001,So2001,WangLiRuan2006,WuZou2001}.

The rest of this paper is organized as follows. In Section 2, we prove the equivalence of the system (\ref{1}) with spatiotemporal delay and a system without nonlocal  and delay effect. Sections 3 is devoted to obtain the existence of the local bifurcated spatially nonhomogeneous steady-state solutions, and the stability of bifurcating solutions is shown in Section 4. In Section 5, the global bifurcation structure of positive steady state solutions is shown in two different scenarios, and a uniqueness of positive steady state result for one-dimensional case is shown in Section 6. In Section 7, we apply our main results to the Logistic type models and Nicholson's blowflies type equations.

\section{Equivalence of systems}
In this section we establish the equivalence of the reaction-diffusion system \eqref{1} with spatiotemporal delay given in  \eqref{4} and \eqref{eq5} and reaction-diffusion systems without delays. We will consider the cases of bounded domains and entire space $\R^n$.

\subsection{The bounded domain}
First we recall the following standard result for the linear parabolic equations.
\begin{lemma}
  \label{lem1}
  Let $\Om$ be a bounded domain in $\R^n$ with smooth boundary. Suppose that $f:\overline{\Omega}\times (t_0,+\infty)$ is continuous and $u\in C^{2,1}(\Omega\times[t_0,+\infty))\cap C^0(\bar{\Omega}\times[t_0,+\infty))$ satisfies
   \begin{equation}\label{5}
    \begin{cases}
      u_t(x,t)=d\Delta u(x,t)-ku(x,t)+f(x,t),&x\in\Omega,~t>t_0,\\
      Bu(x,t)=0,&x\in\partial\Omega,~t\geq t_0,\\
      u(x,t_0)=u_0(x),&x\in\Omega,
       \end{cases}
  \end{equation}
  where $Bu=u$, or  $Bu=\ds\frac{\partial u}{\partial n}+a(x)u$ with $a(x)\ge 0$.
Then
  \begin{equation}\label{15}
  u(x,t)=\int_{\Omega}G(x,y,t-t_0)e^{-k(t-t_0)}u_0(y)dy+\int_{t_0}^t\int_{\Omega}G(x,y,t-s)e^{-k(t-s)}f(y,s)dyds,
   \end{equation}
   where for any fixed $y\in\Omega,$ $G(x,y,t)$ is the Green function of the diffusion equation satisfying
\begin{equation*}
\begin{cases}
  G_t(x,y,t)=d\Delta_xG(x,y,t),&x\in \Omega,~t>0\\
  BG(x,y,t)=0,&x\in\partial\Omega,~t>0,\\
  G(x,y,0)=\delta(x-y).
  \end{cases}
\end{equation*}
\end{lemma}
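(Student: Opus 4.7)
The plan is to reduce this problem to the classical Green's-function representation for the pure diffusion equation by an exponential change of variables that absorbs the reaction term $-ku$. Specifically, I would set $w(x,t) := e^{k(t-t_0)}u(x,t)$ and verify by direct differentiation that
\begin{equation*}
w_t(x,t) = d\Delta w(x,t) + e^{k(t-t_0)}f(x,t), \qquad x\in\Omega,\ t>t_0,
\end{equation*}
with the same boundary operator $Bw=0$ on $\partial\Omega$ (since $B$ is linear in $u$ and the transformation only rescales by a factor that is spatially constant at each fixed $t$) and initial data $w(x,t_0)=u_0(x)$. The regularity of $w$ is inherited from that of $u$ stated in the hypotheses.

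Next I would apply the standard Green's function representation for the pure diffusion problem with source, namely
\begin{equation*}
w(x,t)=\int_{\Omega}G(x,y,t-t_0)u_0(y)\,dy+\int_{t_0}^{t}\int_{\Omega}G(x,y,t-s)\,e^{k(s-t_0)}f(y,s)\,dy\,ds.
\end{equation*}
This identity can be established either by expanding $u_0$ and $f(\cdot,s)$ in the eigenbasis $\{\phi_n\}$ (using the explicit series \eqref{4}) and solving the resulting scalar ODEs in $t$, or equivalently by verifying that the right-hand side satisfies the initial-boundary value problem and invoking uniqueness via the maximum principle. Multiplying both sides by $e^{-k(t-t_0)}$ and using the identity $e^{-k(t-t_0)}\cdot e^{k(s-t_0)}=e^{-k(t-s)}$ then yields formula \eqref{15}.

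There is no substantive obstacle: this is a classical Duhamel-type result, and the only technical points are the justification that the volume Duhamel integral can be differentiated under the integral sign (which uses continuity of $f$ on $\overline{\Omega}\times[t_0,\infty)$ and the smoothing properties of $G$ for $t-s>0$) and the verification that the boundary operator $B$ (Dirichlet or Robin with $a\ge 0$) commutes with the exponential rescaling, both of which are routine. Hence the main writing task is simply to record the substitution, cite or briefly derive the Green's-function formula for the source-free heat-type equation, and unwind the substitution.
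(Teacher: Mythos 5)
Your proposal is correct and follows essentially the same route as the paper: the paper also derives \eqref{15} by expanding in the eigenbasis of $-\Delta$ with the given boundary operator, solving the resulting scalar problems (which produces the factor $e^{-(d\mu_n+k)(t-t_0)}$, i.e.\ the kernel $G$ multiplied by $e^{-k(t-t_0)}$), and then invoking Duhamel's principle. Your substitution $w=e^{k(t-t_0)}u$ merely shifts the factor $e^{-k(t-t_0)}$ out of the eigenvalue $d\mu_n+k$ before applying the same eigenfunction-plus-Duhamel argument, so the two proofs coincide in substance.
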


\begin{proof}
  Denote by $\{(\mu_n,\varphi_n(x))\}_{n=1}^{\infty}$ the eigenvalues and the corresponding normalized eigenfunctions of
   \begin{equation*}\label{16}
  \begin{cases}
    -\Delta \varphi(x)=\mu \varphi(x),&x\in\Omega,\\
    B\varphi(x)=0,&x\in\partial\Omega.
  \end{cases}
\end{equation*}
The for the homogeneous equation
    \begin{equation*}\label{17}
   \begin{cases}
         v_t(x,t)=d\Delta v(x,t)-kv(x,t),&x\in\Omega,~t>t_0,\\
     Bv(x,t)=0,&x\in\partial\Omega,~t\geq t_0,\\
     v(x,t_0)=v_0(x),&x\in\Omega,
        \end{cases}
   \end{equation*}
the solution is   given by
  \begin{equation*}
    v(x,t)=\sum\limits_{n=1}^{\infty}c_ne^{-(d\mu_n+k)(t-t_0)}\varphi_n(x), \;\; c_n=\int_{\Omega}\phi_n(y)v_0(y)dy.
  \end{equation*}
This implies that
  \begin{equation*}
    \begin{split}
      v(x,t)&=\sum\limits_{n=1}^{\infty}\left(\int_{\Omega}\varphi_n(y)v_0(y)dy\right)e^{-(d\mu_n+k)(t-t_0)}\varphi_n(x)\\
      &=\int_{\Omega}\left(\sum\limits_{n=1}^{\infty}e^{-d\mu_n(t-t_0)}\varphi_n(x)\varphi_n(y)\right)e^{-k(t-t_0)}v_0(t_0)dy\\
      &=\int_{\Omega}G(x,y,t-t_0)e^{-k(t-t_0)}v_0(y)dy.
    \end{split}
  \end{equation*}
  By the Duhamel principle, it follows that the solution of the initial boundary value problem \eqref{5}
    is given by \eqref{15}.
\end{proof}
Now we have the following result regarding an entire solution $u(x,t)$ defined for $t\in (-\infty,+\infty)$:
\begin{lemma}
  \label{lem3}
Let $\Om$ be a bounded domain in $\R^n$ with smooth boundary. Suppose that $f:\overline{\Omega}\times (-\infty,+\infty)$ is continuous and $u\in C^{2,1}(\Omega\times(-\infty,+\infty))\cap C^0(\overline{\Omega}\times(-\infty,+\infty))$ satisfies
   \begin{equation*}\label{36}
    \begin{cases}
      u_t(x,t)=d\Delta u(x,t)-ku(x,t)+f(x,t),&x\in\Omega,~t\in(-\infty,+\infty),\\
      Bu(x,t)=0,&x\in\partial\Omega,t\in(-\infty,+\infty).
       \end{cases}
  \end{equation*}
  Then
  \begin{equation}\label{15a}
  u(x,t)=\int_{-\infty}^t\int_{\Omega}G(x,y,t-s)e^{-k(t-s)}f(y,s)dyds.
   \end{equation}
\end{lemma}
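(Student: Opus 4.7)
The plan is to apply Lemma \ref{lem1} on the interval $[t_0, t]$ and then send $t_0 \to -\infty$. Fix $t \in \R$; for each $t_0 < t$, the restriction of $u$ to $\overline{\Omega} \times [t_0, \infty)$ satisfies the initial boundary value problem \eqref{5} with initial datum $u_0(\cdot) = u(\cdot, t_0)$, and Lemma \ref{lem1} gives the representation
\begin{equation*}
u(x,t) = \int_{\Omega} G(x,y,t-t_0) e^{-k(t-t_0)} u(y,t_0) dy + \int_{t_0}^{t}\int_{\Omega} G(x,y,t-s) e^{-k(t-s)} f(y,s) dy ds.
\end{equation*}
Formula \eqref{15a} will then follow from showing that the first ``initial data'' term vanishes as $t_0 \to -\infty$ and that the Duhamel integral converges to the improper integral over $(-\infty, t)$.

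For the initial-data term, I would expand $G$ in the eigenbasis $\{\varphi_n\}$ of $-\Delta$ with boundary condition $B$ as in the proof of Lemma \ref{lem1}, to obtain
\begin{equation*}
\int_{\Omega} G(x,y,t-t_0) e^{-k(t-t_0)} u(y,t_0) dy = \sum_{n=1}^{\infty} e^{-(d\mu_n + k)(t-t_0)} \hat{u}_n(t_0) \varphi_n(x),
\end{equation*}
where $\hat{u}_n(t_0) = \int_{\Omega} \varphi_n(y) u(y, t_0) dy$. Under the natural standing hypothesis that $u$ is uniformly bounded on $\overline{\Omega} \times \R$ (the setting in which this lemma is later applied to bounded entire solutions of \eqref{1}), the coefficients $\hat{u}_n(t_0)$ are uniformly bounded in $t_0$, and since $d\mu_1 + k > 0$ each term decays exponentially as $t - t_0 \to \infty$; a dominated-convergence argument in $\ell^2$ then forces the whole sum to zero. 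For the Duhamel integral, the normalization $\int_{\Omega} G(x,y,\sigma) dy \leq 1$ from \eqref{1aa} together with the boundedness of $f$ provides the integrable majorant $\|f\|_{\infty} e^{-k(t-s)}$ on $(-\infty, t)$, so monotone/dominated convergence yields the second limit and hence \eqref{15a}.

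The main obstacle is the vanishing of the initial-data contribution at $-\infty$; this is the only step that genuinely uses the ``entire'' nature of $u$ and requires some growth control of $u$ at $t = -\infty$. Without uniform-in-time boundedness of $u$ (and hence of $f$) the right-hand side of \eqref{15a} need not even converge, so this assumption is essentially implicit in the statement. Everything else reduces to Lemma \ref{lem1} and standard convergence of improper integrals.
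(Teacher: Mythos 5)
Your proposal is correct and takes essentially the same route as the paper: apply Lemma \ref{lem1} on $[t_0,t]$ and let $t_0\to-\infty$, showing the initial-data term vanishes while the Duhamel integral converges via the kernel normalization and the factor $e^{-k(t-s)}$. The only difference is in one step — where you re-expand the initial-data term in the eigenbasis and invoke dominated convergence, the paper disposes of it in one line via $\|h(\cdot,t;t_0)\|\le\|u(\cdot,t_0)\|e^{-k(t-t_0)}$ using $\int_{\Omega}G(x,y,t-t_0)\,dy\le 1$ — and note that this direct bound relies on exactly the same tacit boundedness of $u(\cdot,t_0)$ as $t_0\to-\infty$ that you correctly flag as implicit in the statement.
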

\begin{proof}
  For any fixed $t_0<t$, by Lemma \ref{lem1}, we have
  \begin{equation*}
    u(x,t)=h(x,t;t_0)+\int_{t_0}^t\int_{\Omega}G(x,y,t-s)e^{-k(t-s)}f(y,s)dyds,
  \end{equation*}
  where $h(x,t;t_0)\triangleq\ds\int_{\Omega}G(x,y,t-t_0)e^{-k(t-t_0)}u(y,t_0)dy$. And
  \begin{equation*}
   \|h(x,t;t_0)\|\leq\|u(\cdot,t_0)\|\int_{\Omega} G(x,y,t-t_0)dy e^{-k(t-t_0)}\leq\|u(\cdot,t_0)\|e^{-k(t-t_0)}.
   \end{equation*}
    Then $h(x,t;t_0)\rightarrow 0$ as $t_0\to-\infty$ and from the arbitrariness of $t_0$, we let $t_0\rightarrow-\infty$ and we obtain \eqref{15a}.
\end{proof}

%
By using Lemma \ref{lem3}, we have the following results on the equivalence of the two systems under the weak or strong distribution kernels.
\begin{proposition}
  \label{thm1}
  Suppose that the distributed delay kernel $g(t)$ is given by the weak kernel function $g_w(t)=\ds\frac{1}{\tau}e^{-\frac{t}{\tau}}$, and define
  \begin{equation*}\label{v}
    v(x,t)=(g_w\ast\ast H(u))(x,t)=\int_{-\infty}^t\int_{\Omega}G(x,y,t-s)g_w(t-s)H(u(y,s))dyds.
  \end{equation*}
\begin{enumerate}
   \item If $u(x,t)$ is the solution of \eqref{1}, then $(u(x,t),v(x,t))$ is the solution of
       \begin{equation}
  \label{6}
  \begin{cases}
    u_t(x,t)=d\Delta u(x,t)+F(\lambda,u(x,t),v(x,t)),&x\in\Omega,~t>0,\\
    \ds v_t(x,t)=d\Delta v(x,t)+\frac{1}{\tau}(H(u(x,t))-v(x,t)),&x\in\Omega,~t>0,\\
    Bu(x,t)=Bv(x,t)=0,&x\in \partial\Omega,~t>0,\\
    u(x,0)=\eta(x,0),&x\in\Omega,\\
    \ds v(x,0)=\frac{1}{\tau}\int_{-\infty}^0\int_{\Omega}G(x,y,-s)e^{\frac{s}{\tau}}H(\eta(y,s))dyds,&x\in\Omega.
      \end{cases}
\end{equation}
\item If $(u(x,t),v(x,t))$ is a solution of
 \begin{equation}
  \label{43}
  \begin{cases}
    u_t(x,t)=d\Delta u(x,t)+F(\lambda,u(x,t),v(x,t)),&x\in\Omega,~t\in\R,\\
    \ds v_t(x,t)=d\Delta v(x,t)+\frac{1}{\tau}(H(u(x,t))-v(x,t)),&x\in\Omega,~t\in\R,\\
    Bu(x,t)=Bv(x,t)=0,&x\in \partial\Omega,~t\in\R.\\
          \end{cases}
\end{equation}
   Then $u(x,t)$ satisfies (\ref{1}) such that $\eta(x,s)=u(x,s),~-\infty<s<0$.
In particular, if $(u(x),v(x))$ is a steady state solution of \eqref{6}, then $u(x)$ is a steady state solution of \eqref{1}; and if $(u(x,t),v(x,t))$ is a periodic solution of \eqref{43} with period $T$, then $u(x,t)$ is a periodic solution of \eqref{1} with period $T$.
 \end{enumerate}
\end{proposition}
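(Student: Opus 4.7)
The plan is to reduce both directions to Lemma \ref{lem3} with damping constant $k = 1/\tau$, exploiting the algebraic identity $g_w(t-s) = \frac{1}{\tau}e^{-(t-s)/\tau}$ that absorbs the weak-kernel exponential into the linear dissipation term of a parabolic equation. Both implications are then just the formula \eqref{15a} read in opposite directions.

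For part (1), assume $u$ solves \eqref{1} and define $v = g_w \ast\ast H(u)$. The first equation of \eqref{6} is simply \eqref{1} rewritten. For the $v$-equation, I would rewrite the defining convolution as
\begin{equation*}
  v(x,t) = \int_{-\infty}^{t}\!\int_{\Omega} G(x,y,t-s)\,e^{-(t-s)/\tau}\,\frac{H(u(y,s))}{\tau}\,dy\,ds,
\end{equation*}
recognize this as the representation \eqref{15a} with $k = 1/\tau$ and source $f(x,t) = H(u(x,t))/\tau$, and conclude by the Duhamel computation underlying Lemma \ref{lem3} that $v$ solves
\begin{equation*}
  v_t = d\Delta v - \frac{1}{\tau}v + \frac{1}{\tau} H(u), \qquad Bv|_{\partial\Omega} = 0.
\end{equation*}
The stated formula for $v(x,0)$ then follows by evaluating the integral at $t = 0$ and substituting $u(\cdot,s) = \eta(\cdot,s)$ for $s \le 0$.

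For part (2), given an entire solution $(u,v)$ of \eqref{43}, I would view the $v$-equation as a linear inhomogeneous parabolic problem for $v$ with continuous forcing $H(u)/\tau$ defined for all $t \in \R$, and apply Lemma \ref{lem3} directly to obtain
\begin{equation*}
  v(x,t) = \int_{-\infty}^{t}\!\int_{\Omega} G(x,y,t-s)\,e^{-(t-s)/\tau}\,\frac{H(u(y,s))}{\tau}\,dy\,ds = (g_w \ast\ast H(u))(x,t).
\end{equation*}
Plugging this back into the $u$-equation of \eqref{43} reproduces \eqref{1} with history $\eta(x,s) = u(x,s)$ for $s \le 0$. The steady-state claim is then immediate because a time-independent $u$ inserted into the convolution gives a time-independent $v$; for the periodic claim, the change of variables $s \mapsto s - T$ inside the integral, together with $u(\cdot,s-T) = u(\cdot,s)$, yields $v(x,t+T) = v(x,t)$, so $u$ is indeed a $T$-periodic solution of \eqref{1}.

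The only points requiring care are convergence of the improper integral at $s = -\infty$ and inheritance of the boundary condition by $v$. The former is ensured by the factor $e^{-(t-s)/\tau}$ and uniform boundedness of $H(u)$ on the relevant range of $u$, and the latter is automatic because $G(\cdot,y,t-s)$ satisfies $BG = 0$ by construction of the Green's function in \eqref{4}. I do not anticipate a genuine obstacle: the proposition is essentially a pair of mirror-image applications of Lemma \ref{lem3}.
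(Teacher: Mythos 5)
Your proposal is correct and takes essentially the same route as the paper: the paper dispatches this proposition in one line as ``immediate from Lemma \ref{lem3}'', and your two mirror-image applications of that lemma with $k=1/\tau$ and forcing $H(u)/\tau$ (together with the direct Duhamel differentiation for part (1) and the change of variables $s\mapsto s-T$ for the periodic claim) are exactly the intended argument. The convergence at $s=-\infty$ and the inheritance of $Bv=0$ that you flag are treated at the same informal level in the paper itself.
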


\begin{proposition}
  \label{thm2} Suppose that the distributed delay kernel $g(t)$ is given by the strong kernel function $\ds g_s(t)=\frac{t}{\tau^2}e^{-\frac{t}{\tau}}$, and define
  \begin{equation}\label{v1}
    v(x,t)=(g_s\ast\ast H(u))(x,t)=\int_{-\infty}^t\int_{\Omega}G(x,y,t-s)g_s(t-s)H(u(y,s))dyds.
  \end{equation}
\begin{enumerate}
   \item If $u(x,t)$ is the solution of (\ref{1}), then $(u(x,t),v(x,t),w(x,t))$ is the solution of
       \begin{equation}
  \label{45}
  \begin{cases}
    u_t(x,t)=d\Delta u(x,t)+F(\lambda,u(x,t),v(x,t)),&x\in\Omega,~t>0,\\
   \ds v_t(x,t)=d\Delta v(x,t)+\frac{1}{\tau}(w(x,t)-v(x,t)),&x\in\Omega,~t>0,\\
   \ds  w_t(x,t)=d\Delta w(x,t)+\frac{1}{\tau}(H(u(x,t))-w(x,t)),&x\in\Omega,~t>0,\\
    Bu(x,t)=Bv(x,t)=Bw(x,t)=0,&x\in \partial\Omega,~t>0,\\
    u(x,0)=\eta(x,0),&x\in\Omega,\\
    \ds v(x,0)=\int_{-\infty}^0\int_{\Omega}G(x,y,-s)\frac{-s}{\tau^2}e^{\frac{s}{\tau}}H(\eta(y,s))dyds,&x\in\Omega,\\
    \ds w(x,0)=\int_{-\infty}^0\int_{\Omega}G(x,y,-s)\frac{1}{\tau}e^{\frac{s}{\tau}}H(\eta(y,s))dyds,&x\in\Omega.
      \end{cases}
\end{equation}
\item If $(u(x,t),v(x,t),w(x,t))$ is a solution of
 \begin{equation}
  \label{44}
  \begin{cases}
    u_t(x,t)=d\Delta u(x,t)+F(\lambda,u(x,t),v(x,t)),&x\in\Omega,~t\in\R,\\
   \ds v_t(x,t)=d\Delta v(x,t)+\frac{1}{\tau}(w(x,t)-v(x,t)),&x\in\Omega,~t\in\R,\\
   \ds  w_t(x,t)=d\Delta w(x,t)+\frac{1}{\tau}(H(u(x,t))-w(x,t)),&x\in\Omega,~t\in\R,\\
       Bu(x,t)=Bv(x,t)=Bw(x,t)=0,&x\in \partial\Omega,~t\in\R.
          \end{cases}
\end{equation}
   Then $u(x,t)$ satisfies (\ref{1}) with the strong kernel $g_s(t)$ such that $\eta(x,s)=u(x,s),~-\infty<s<0$.
   In particular, if $(u(x),v(x),w(x))$ is a steady state solution of (\ref{45}), then $u(x)$ is a steady state solution of (\ref{1}); if $(u(x,t),v(x,t),w(x,t))$ is a periodic solution of (\ref{44}) with period $T$, then $u(x,t)$ is  a periodic solution of (\ref{1}) with period $T$.
 \end{enumerate}
\end{proposition}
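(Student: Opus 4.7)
The plan is to reduce the strong-kernel case to an iterated application of the weak-kernel argument from Proposition \ref{thm1} by introducing an auxiliary function. I would first define
\[
w(x,t)=\int_{-\infty}^t\int_{\Omega}G(x,y,t-s)\frac{1}{\tau}e^{-(t-s)/\tau}H(u(y,s))\,dy\,ds,
\]
so that $w$ is precisely the weak-kernel convolution $g_w\ast\ast H(u)$. Applying Lemma \ref{lem3} with $k=1/\tau$ and forcing term $\tau^{-1}H(u)$ gives at once that $w$ satisfies the third equation of \eqref{45}, namely $w_t=d\Delta w+\tau^{-1}(H(u)-w)$ on $\Omega\times\R$ with boundary condition $Bw=0$.

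The key computational step is then to show that $v$, defined in \eqref{v1} by the strong kernel, coincides with the weak-kernel convolution of $w$:
\[
v(x,t)=\int_{-\infty}^t\int_{\Omega}G(x,y,t-s)\frac{1}{\tau}e^{-(t-s)/\tau}w(y,s)\,dy\,ds.
\]
To verify this, I would substitute the definition of $w$ into the right-hand side, interchange the orders of integration (Fubini) in the outer variable $s$ and the inner convolution variable $r$, and invoke the semigroup identity
\[
\int_{\Omega}G(x,y,t-s)G(y,z,s-r)\,dy=G(x,z,t-r),
\]
which follows from the eigenfunction expansion \eqref{4} and the $L^2$-orthonormality of $\{\phi_n\}$. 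The $s$-integral then collapses to $\int_r^t ds=t-r$, which together with the factor $\tau^{-2}$ reproduces exactly the strong kernel weight $(t-r)/\tau^2$. Once this representation is in hand, a second application of Lemma \ref{lem3} (now with $f=\tau^{-1}w$) yields $v_t=d\Delta v+\tau^{-1}(w-v)$, which is the second equation of \eqref{45}.

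For part (1), the values $v(x,0)$ and $w(x,0)$ in \eqref{45} are obtained by setting $t=0$ in the integral representations of $v$ and $w$ and using $u(y,s)=\eta(y,s)$ for $s\le 0$; the condition $u(x,0)=\eta(x,0)$ is immediate. For part (2), given an entire solution $(u,v,w)$ of \eqref{44}, Lemma \ref{lem3} applied to the $w$-equation recovers $w=g_w\ast\ast H(u)$, and applied to the $v$-equation with forcing $\tau^{-1}w$ then identifies $v=g_s\ast\ast H(u)$ via the same Fubini/semigroup computation; hence $u$ solves \eqref{1} with history $\eta(x,s)=u(x,s)$ for $s<0$. The steady-state and periodic corollaries follow because these classes of solutions depend only on the evolution equations and boundary conditions, which are identical to those of \eqref{1} after the convolution identity is established. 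The main obstacle I anticipate is the Fubini plus semigroup manipulation identifying $v$ with the weak-kernel convolution of $w$; the remaining steps are bookkeeping of initial data and routine invocations of Lemma \ref{lem3}.
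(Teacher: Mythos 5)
Your proposal is correct, but it takes a genuinely different route from the paper's. The paper proves Proposition \ref{thm2} by direct differentiation of \eqref{v1} in $t$: since $g_s(0)=0$ and $g_s'(t)=\tau^{-1}\bigl(g_w(t)-g_s(t)\bigr)$, differentiating under the integral sign produces no boundary term for $v$ and yields $v_t=d\Delta v+\tau^{-1}(w-v)$ with $w=g_w\ast\ast H(u)$, while differentiating $w$ (where $g_w(0)=1/\tau$ and $G(x,y,0)=\delta(x-y)$ produce the boundary term) gives the third equation; part (2) then follows from Lemma \ref{lem3} exactly as in the weak-kernel case. You instead factorize the strong kernel as an iterated weak-kernel convolution, $g_s\ast\ast H(u)=g_w\ast\ast\bigl(g_w\ast\ast H(u)\bigr)$, via Fubini and the Chapman--Kolmogorov identity $\int_{\Omega}G(x,y,t-s)G(y,z,s-r)\,dy=G(x,z,t-r)$, and then run the weak-kernel argument twice. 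Your computation checks out: the exponentials combine to $e^{-(t-r)/\tau}$, the Green's functions collapse, and the $s$-integral contributes exactly the factor $t-r$, reproducing $g_s$. This factorization is arguably more structural than the paper's calculation --- it makes transparent why the Gamma kernel of order $n$ yields a chain of $n+1$ reaction-diffusion equations --- at the cost of needing the semigroup identity (which, as you note, rests on the expansion \eqref{4} with $L^2$-orthonormal eigenfunctions) and a Fubini justification requiring the same tacit boundedness of $H(u(\cdot,s))$ as $s\to-\infty$ that the paper leaves implicit in Lemma \ref{lem3}. One point of phrasing to fix: Lemma \ref{lem3} as stated runs only from the PDE to the integral representation, so in part (1) your ``second application of Lemma \ref{lem3}'' is actually the converse step --- differentiating the mild-solution formula to recover the PDE --- and should be stated as a direct (elementary) verification; the paper's ``immediate from Lemma \ref{lem3}'' for Proposition \ref{thm1} silently makes the same move, so this is a presentational issue rather than a gap.
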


The proof of Proposition \ref{thm1} is immediate from Lemma \ref{lem3}, and the proof of Proposition \ref{thm2} follows from differentiating \eqref{v1} with respect to $t$ and elementary calculation. The equivalence of \eqref{1} and \eqref{45} has been first observed in \cite{Gourley2002}.

 \subsection{The whole space $\R^N$}
 Consider a general scalar reaction-diffusion equation with spatiotemporal delay in the entire space:
\begin{equation}\label{37}
u_t(x,t)=d\Delta u(x,t)+F(\lambda,u(x,t),(g**H(u))(x,t)),~x\in \R^N,~t\in \R.
\end{equation}
Here,
\begin{equation*}
 (g**H(u))(x,t)=\int_{-\infty}^t\int_{\R^N}G(x,y,t-s)g(t-s)H(u(y,s))dyds,
 \end{equation*}
where for $y\in\R^N$, $G(x,y,t)$ is a fundamental solution of
\begin{equation*}
\begin{cases}
G_t(x,y,t)=d\Delta_x G(x,y,t),&x\in\R^N,\; t>0,\\
 G(x,y,0)=\delta(x-y),&x\in\R^N, \; t>0.
\end{cases}
\end{equation*}

By using the similar method as Propositions \ref{thm1} and \ref{thm2}, we can prove the following results on equivalence of \eqref{37} and associated systems:
\begin{proposition}\label{lem8}
\begin{enumerate}
\item If $(u(x,t),v(x,t))$ is a solution of
\begin{equation*}\label{38}
\begin{cases}
    u_t(x,t)=d\Delta u(x,t)+F(\lambda,u,v),&x\in\R^N,~t\in\R,\\
   \ds v_t(x,t)=d\Delta v(x,t)+\frac{1}{\tau}(H(u(x,t))-v(x,t)),&x\in\R^N,~t\in \R,
 \end{cases}
\end{equation*}
then $u(x,t)$ is also a solution of (\ref{37}) with the weak kernel $\ds g_w(t)=\frac{1}{\tau}e^{-\frac{t}{\tau}}$.

\item If $(u(x,t),v(x,t),w(x,t))$ is a  solution of
\begin{equation*}
  \label{38}
  \begin{cases}
    u_t(x,t)=d\Delta u(x,t)+F(\lambda,u,v),&x\in\R^N,~t\in\R,\\
   \ds v_t(x,t)=d\Delta v(x,t)+\frac{1}{\tau}(w(x,t)-v(x,t)),&x\in\R^N,~t\in \R,\\
   \ds w_t(x,t)=d\Delta w(x,t)+\frac{1}{\tau}(H(u(x,t))-w(x,t)),&x\in\R^N,~t\in \R,\\
          \end{cases}
\end{equation*}
       then $u(x,t)$ is also a  solution of (\ref{37}) with the strong kernel $\ds g_s(t)=\frac{t}{\tau^2}e^{-\frac{t}{\tau}}$.
 \end{enumerate}
\end{proposition}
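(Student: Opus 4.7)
The plan is to mirror the bounded-domain arguments of Propositions \ref{thm1} and \ref{thm2}, replacing the Dirichlet Green's function by the Gaussian heat kernel on $\R^N$ and the eigenfunction expansion used in Lemma \ref{lem1} by a Fourier-based Duhamel principle. The key identity that drives both parts is the semigroup property $\int_{\R^N} G(x,y,t-s)\, G(y,z,s-r)\, dy = G(x,z,t-r)$.

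For part (1), rewrite the $v$-equation as the linear absorption--diffusion equation
\begin{equation*}
v_t = d\Delta v - \tfrac{1}{\tau} v + \tfrac{1}{\tau} H(u(x,t)), \qquad x\in\R^N,\; t\in\R.
\end{equation*}
The $\R^N$ analog of Lemma \ref{lem3} then yields the Duhamel representation
\begin{equation*}
v(x,t) = \frac{1}{\tau}\int_{-\infty}^t\!\int_{\R^N} G(x,y,t-s)\, e^{-(t-s)/\tau}\, H(u(y,s))\, dy\, ds = (g_w ** H(u))(x,t),
\end{equation*}
since $\tau^{-1} e^{-t/\tau} = g_w(t)$. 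Substituting this back into the first equation recovers \eqref{37} with weak kernel.

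For part (2), the same Duhamel argument applied to the $w$-equation gives $w = g_w ** H(u)$, and then applied to the $v$-equation (viewed as a linear equation with source $\tau^{-1} w$) yields, after inserting the formula for $w$,
\begin{equation*}
v(x,t) = \frac{1}{\tau^2}\int_{-\infty}^t\!\int_{-\infty}^s\! e^{-(t-r)/\tau}\!\int_{\R^N}\!\int_{\R^N}\! G(x,y,t-s)\, G(y,z,s-r)\, H(u(z,r))\, dy\, dz\, dr\, ds.
\end{equation*}
Collapsing the $y$-integral by the semigroup identity and interchanging the order of the $s,r$ integrations (for fixed $r<t$, the variable $s$ ranges over $(r,t)$), the $s$-integration contributes the factor $t-r$, giving
\begin{equation*}
v(x,t) = \int_{-\infty}^t\!\int_{\R^N} G(x,z,t-r)\,\frac{t-r}{\tau^2}\, e^{-(t-r)/\tau}\, H(u(z,r))\, dz\, dr = (g_s ** H(u))(x,t).
\end{equation*}
Substituting into the $u$-equation then produces \eqref{37} with strong kernel.

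The main obstacle is justifying the $\R^N$ analog of Lemma \ref{lem3}: on the whole space there is no discrete spectrum to expand against, so one must impose a mild growth condition on $(u,v,w)$ (e.g.\ boundedness on $\R^N\times\R$, or sub-Gaussian growth) to ensure that the homogeneous contribution $h(x,t;t_0) = \int_{\R^N} G(x,y,t-t_0)\, e^{-(t-t_0)/\tau}\, u(y,t_0)\, dy$ tends to $0$ as $t_0\to-\infty$ and that Fubini's theorem can be invoked to validate the interchanges above. Provided $H(u)$ is uniformly bounded along the solution, these convergence checks are routine, and the proposition follows from the two Duhamel computations above.
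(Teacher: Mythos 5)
Your proof is correct and follows essentially the same route as the paper, which gives no details for Proposition \ref{lem8} beyond saying it follows by ``the similar method as Propositions \ref{thm1} and \ref{thm2}'': your $\R^N$ analog of the entire-solution Duhamel representation of Lemma \ref{lem3}, applied once for the weak kernel and twice (with the Chapman--Kolmogorov identity $\int_{\R^N}G(x,y,t-s)G(y,z,s-r)\,dy=G(x,z,t-r)$ and the time-convolution identity yielding the factor $t-r$, i.e.\ $g_w\ast g_w=g_s$) for the strong kernel, is precisely the elementary calculation the paper alludes to. Your added remark that on $\R^N$ one must impose a boundedness or sub-Gaussian growth condition on the solution --- to validate the heat-kernel representation, force the homogeneous term $h(x,t;t_0)\to 0$ as $t_0\to-\infty$ (using $\int_{\R^N}G(x,y,t)\,dy=1$ in place of the bounded-domain bound), and justify Fubini --- is a genuine point of care that the paper leaves implicit.
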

Note that the equivalence of systems is valid for any solution defined for all $t\in \R$, which include steady state solutions, periodic solutions, and also traveling wave solutions. This equivalence was first observed in \cite{Britton1990}. In this paper, we only consider the bounded domain case.

\section{Existence and local bifurcation of steady state solutions }

In this section, we consider the existence of positive steady state solution of the system  (\ref{1}) with a weak kernel subject to Dirichlet boundary condition. The strong kernel case can be considered similarly but will not be considered here. By Theorem \ref{thm1}, we only need to consider the steady state solutions of the equivalent system (\ref{6}), which are the solutions of system of semilinear elliptic system:
\begin{equation}\label{8a}
  \begin{cases}
    d\Delta u(x)+F(\lambda,u(x),v(x))=0,&x\in\Omega,\\
   \ds d\Delta v(x)+\frac{1}{\tau}(H(u(x))-v(x))=0,&x\in\Omega,\\
      u(x)=v(x)=0,&x\in\partial\Omega.
  \end{cases}
\end{equation}
In the following we always assume that $d>0$, $\tau>0$ and $\lambda\ge 0$. We use bifurcation method with parameter $d$ to prove the existence of positive solutions to \eqref{8a}. Note that a bifurcation analysis can also be conducted using parameter $\lambda$ with a fixed $d$. So in the following we assume $F(\lambda,u,v)\equiv F(u,v)$ as $\lambda$ is fixed, so we consider
\begin{equation}\label{8}
  \begin{cases}
    d\Delta u(x)+F(u(x),v(x))=0,&x\in\Omega,\\
   \ds d\Delta v(x)+\frac{1}{\tau}(H(u(x))-v(x))=0,&x\in\Omega,\\
      u(x)=v(x)=0,&x\in\partial\Omega.
  \end{cases}
\end{equation}
 We assume that the nonlinearities $F(u,v)$ and $H(u)$ in \eqref{8} satisfy
\begin{enumerate}
   \item[\bf(A1)] There exists a $\delta>0$ such that $F:  U_{\delta}\times U_{\delta}\to \R$ and $H:U_{\delta}\to \R$ are $C^2$ functions, where  $U_{\delta}=\{y\in \R: |y|<\delta\}$;

\item[\bf(A2)] $F(0,0)=0$, $H(0)=0$ and $H'(0)>0$.

\end{enumerate}
In the following, the first and second derivatives of $F$ and $H$ are denoted by
    \begin{equation}\label{41}
\begin{split}
 &F_u(0,0)=a,~F_v(0,0)=b,~H'(0)=k>0,\\
 &F_{uu}(0,0)=p,~F_{uv}(0,0)=q,~F_{vv}(0,0)=r,~H''(0)=l.
 \end{split}
\end{equation}
From {\bf (A2)}, it is known that $(u,v)=(0,0)$ is a trivial solution of \eqref{8} for any $d,\tau>0$.
Let $X=W^{2,p}(\Om)\times W^{1,p}_0(\Om)$ for $p>n$, and let $Y=L^p(\Om)$.
For the bifurcation of positive solutions of \eqref{8}, fixing $\tau>0$, we define a nonlinear mapping $W: \R\times X^2\rightarrow Y^2$ by
\begin{equation}\label{W}
  W(d,u,v)=\left(\begin{array}{c}
    d\Delta u+F(u,v)\\
    d\Delta v+\ds\frac{1}{\tau}(H(u)-v)\\
      \end{array}\right).
\end{equation}
Then a solution $(d,u,v)$ of \eqref{8} is equivalent to $W(d,u,v)=(0,0)^T$.

Our main result on the local bifurcation of positive solutions of \eqref{8} is as follows:

\begin{theorem}\label{thm:3.1}
  Suppose that $\tau>0$ is fixed, the conditions {\bf{(A1)}} and {\bf{(A2)}} hold, and also
  \begin{enumerate}
    \item[\bf(A3)] $a+bk>0$.
\end{enumerate}
  Define
  \begin{equation}\label{40}
   d^*(\tau)=\frac{1}{2\lambda_1\tau}(a\tau-1+\sqrt{(a\tau+1)^2+4b\tau k}),
    \end{equation}
    where $\la_1$ is the principal eigenvalue of $-\Delta$ in $H^1_0(\Om)$ with corresponding eigenfunction $\phi_1(x)>0$.
   Then
   \begin{enumerate}
   \item $d=d^*=d^*(\tau)$ is the unique bifurcation point of the system (\ref{8}) where positive solutions of \eqref{8} bifurcate from the line of trivial solutions $\Gamma_0=\{(d,0,0):d>0\}$.
   \item Near $(d,u,v)=(d^*,0,0)$, there exists  $\delta_1>0$ such that all positive solutions of (\ref{8}) near the bifurcation point lie on a smooth curve $ \Gamma_1=\{(d(s),u_d(s,\cdot),v_d(s,\cdot)): s\in(0,\delta_1)\}$
             with $d(s)=d^*+d'(0)s+s^2 z_0(s)$, $(u_d(s,\cdot),v_d(s,\cdot))=s(1,M)\phi_1(\cdot)+s^2(z_1(s,\cdot),z_2(s,\cdot))$, where
           \begin{equation}\label{13}
            M=\frac{2k}{a\tau+1+\sqrt{(a\tau+1)^2+4b\tau k}},
           \end{equation}
      such that $z_0:(0,\delta_1)\to \R$ and $z_1,z_2:(0,\delta_1)\to X$ are smooth functions satisfying $z_i(0)=0$ for $i=0,1,2$. Moreover
      \begin{equation}\label{first}
      \ds d'(0)=\frac{[k(p+2qM+rM^2)+bMl]\ds\int_{\Omega}\phi_1^3(x)dx}{2\lambda_1(k+M^2b\tau)
             \ds\int_{\Omega}\phi^2_1(x)dx}.
      \end{equation}
   \end{enumerate}
\end{theorem}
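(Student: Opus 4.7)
The plan is to apply the Crandall--Rabinowitz local bifurcation theorem to the map $W:\R\times X^2\to Y^2$ in \eqref{W} along the trivial branch $\Gamma_0$, with $d$ as the bifurcation parameter.

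First I compute the linearization $L:=W_{(u,v)}(d,0,0)$, which sends $(\phi,\psi)$ to $(d\Delta\phi+a\phi+b\psi,\;d\Delta\psi+\tau^{-1}(k\phi-\psi))$. Seeking a kernel element of the form $(A,B)\phi_n$ reduces the kernel condition to the vanishing of a $2\times 2$ determinant, equivalent to $(d\lambda_n-a)(d\lambda_n\tau+1)=bk$. Under \textbf{(A3)} this quadratic in $d$ has a unique positive root $d^*_n$, and at $n=1$ this root coincides with $d^*$ in \eqref{40}, with associated null direction $(1,M)\phi_1$ and $M$ as in \eqref{13}. Since $\lambda_n$ is nondecreasing in $n$, $d^*_n<d^*$ for $n\ge 2$, so $d=d^*$ is the only point on $\Gamma_0$ where the kernel contains a positive eigenfunction; at the other $d^*_n$ the null direction involves the sign-changing $\phi_n$, which rules out a positive-solution branch. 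This gives part~(1).

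I then verify the three Crandall--Rabinowitz hypotheses at $d=d^*$. (i) $\dim\ker L=1$: simplicity of $\lambda_1$ together with the strict inequality $d^*_n<d^*$ for $n\ge 2$ reduces the kernel to $\mathrm{span}\{(1,M)\phi_1\}$. (ii) $\mathrm{codim}\,\mathrm{range}(L)=1$: $L$ is Fredholm of index zero by standard elliptic theory, and a direct calculation shows $\ker L^*=\mathrm{span}\{(N,1)\phi_1\}$ with $N=k/(b\tau M)$, where $L^*$ is the $L^2$-adjoint. (iii) Transversality: $W_{d,(u,v)}(d^*,0,0)[(1,M)\phi_1]=(\Delta\phi_1,M\Delta\phi_1)$ has $L^2$-pairing $-\lambda_1(N+M)\int_\Omega\phi_1^2\,dx\ne 0$ with $(N,1)\phi_1$. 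The theorem then produces the smooth curve $\Gamma_1$ in the claimed form.

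Finally, substituting $(u_d(s),v_d(s))=s(1,M)\phi_1+s^2(z_1,z_2)$ and $d(s)=d^*+sd'(0)+O(s^2)$ into \eqref{8} and Taylor-expanding $F$ and $H$ via \eqref{41} gives, at order $s^2$, the linear equation $L(z_1,z_2)^T=(f_1,f_2)^T$ with
\begin{equation*}
f_1=d'(0)\lambda_1\phi_1-\tfrac{1}{2}(p+2qM+rM^2)\phi_1^2,\qquad f_2=d'(0)M\lambda_1\phi_1-\tfrac{l}{2\tau}\phi_1^2.
\end{equation*}
Imposing the Fredholm solvability condition $\int_\Omega(Nf_1+f_2)\phi_1\,dx=0$ and simplifying using the identities $d^*\lambda_1-a=bM$ and $k=M(d^*\lambda_1\tau+1)$ produces \eqref{first}. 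The main obstacle I expect is this last algebraic simplification, where the factor $k+bM^2\tau$ in the denominator only emerges after combining $N=k/(b\tau M)$ with $N+M$ and cancelling carefully; the remainder of the argument is a standard application of bifurcation theory.
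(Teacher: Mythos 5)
Your proposal is correct and follows essentially the same route as the paper: a Crandall--Rabinowitz argument at $d=d^*$ after a mode-by-mode kernel analysis (your determinant condition $(d\lambda_n-a)(d\lambda_n\tau+1)=bk$ is the paper's condition $d\lambda_n=\mu_1$ in disguise), with $d'(0)$ obtained from the second-order solvability condition, which is exactly how the formula the paper cites from its reference is derived, and your expansion correctly reproduces \eqref{first} including the denominator $k+M^2b\tau$ (the paper's own inline computation mistypes it as $k+M^2bl$ before stating the correct version in the theorem). One small caveat: your adjoint-kernel normalization $(N,1)\phi_1$ with $N=k/(b\tau M)$ is undefined when $b=0$, a case permitted by {\bf (A3)} (with $a>0$) and occurring in the paper's logistic application where $F_v(0,0)=0$, so you should use the un-normalized form $(k,\,Mb\tau)\phi_1$ as the paper does; with that replacement your transversality pairing becomes $-\lambda_1(k+M^2b\tau)\int_\Omega\phi_1^2\,dx\ne 0$ and the rest of your computation goes through verbatim in all cases.
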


\begin{proof}
Let $W$ be defined as in \eqref{W}.  Then from {\bf (A1)}, $W$ is twice differentiable in $\R\times X_{\delta}^2$, where $X_{\delta}$ is an open neighborhood of $0$ in $X$.  The Fr\'echet derivative of $W$ in variable $(u,v)$ is
\begin{equation}\label{51}
  W_{(u,v)}(d,u,v)\left(
                    \begin{array}{c}
                      \xi_1 \\
                      \xi_2 \\
                    \end{array}
                  \right)
  =\left(\begin{array}{c}
  d\Delta \xi_1+F_u(u,v)\xi_1+F_v(u,v)\xi_2\\
  d\Delta \xi_2+\ds\frac{1}{\tau}(H'(u)\xi_1-\xi_2)\\
    \end{array}\right),
\end{equation}
and in particular when $(u,v)=(0,0)$, then
\begin{equation}\label{51}
  W_{(u,v)}(d,0,0)\left(
                    \begin{array}{c}
                      \xi_1 \\
                      \xi_2 \\
                    \end{array}
                  \right)
  =\left(\begin{array}{c}
  d\Delta \xi_1\\
  d\Delta \xi_2\\
    \end{array}\right)+A\left(
                    \begin{array}{c}
                      \xi_1 \\
                      \xi_2 \\
                    \end{array}
                  \right),
\end{equation}
where $A$ is defined by
\begin{equation}\label{A}
  A=\left(
      \begin{array}{cc}
        a & b \\
        \ds\frac{k}{\tau} & \ds-\frac{1}{\tau} \\
      \end{array}
    \right).
\end{equation}
The eigenvalues of $A$ satisfy the characteristic equation
\begin{equation*}
  \mu^2-\left(a-\frac{1}{\tau}\right)\mu-\frac{a+bk}{\tau}=0.
\end{equation*}
From {\bf (A3)}, we have $a+bk>0$, then it is easy to see that $A$ has a unique positive eigenvalue $\mu_1>0$
defined by
\begin{equation}\label{mu1}
  \mu_1=\frac{1}{2\tau}(a\tau-1+\sqrt{(a\tau+1)^2+4b\tau k})
\end{equation}
with a positive eigenvector $(1,M)$ where $M$ is defined in \eqref{13}. From the implicit function theorem, if $d>0$ is a bifurcation point for positive solutions of \eqref{8} from the line of trivial solutions, then $W_{(u,v)}(d,0,0)$ is not invertible. That is, the null space $N(W_{(u,v)}(d,0,0))\ne \{0\}$. From Fourier theory, we must have $d=\mu_1/\la_n$, where $\la_n$ is an eigenvalue of $-\Delta$ in $H^1_0(\Om)$. Since $\phi_1$ is the only eigenfunction which does not change sign in $\Omega$, then the only possible bifurcation point for positive solutions is $d=d^*=\mu_1/\la_1$ which is given by \eqref{40}.

At $(d^*,0,0)$, it is easy to compute the kernels of the linearized operator $W_{(u,v)}(d^*,0,0)$ and associated adjoint operator $W^*_{(u,v)}(d^*,0,0)$ respectively:
\begin{equation*}
  N(W_{(u,v)}(d^*,0,0))=span\{(1,M)\phi_1\}, \;\;
  N(W^*_{(u,v)}(d^*,0,0))=span\{(1,Mb\tau/k)\phi_1\}.
\end{equation*}
And the range of the operator $W_{(u,v)}(d^*,0,0)$ is described by the following form:
\begin{equation*}
  R(W_{(u,v)}(d^*,0,0))=\left\{(g_1,g_2)\in Y^2:~\int_{\Omega}\left(k g_1(x)+Mb\tau g_2(x)\right)\phi_1(x)dx=0\right\}.
\end{equation*}
Moreover we have
\begin{equation*}
  W_{d(u,v)}(d^*,0,0)[(1,M)\phi_1]=-\la_1(1,M)\phi_1\not\in  R(W_{(u,v)}(d^*,0,0)),
\end{equation*}
as $\ds \int_{\Omega}\left(k +M^2 b\tau \right)\phi_1^2 dx>0$ since $k +M^2 b\tau=M\sqrt{(a\tau+1)^2+4b\tau k}>0$. Now applying \cite[Theorem 1.7]{crandall1971bifurcation}, we conclude that the set of positive solutions to (\ref{8}) near $(d^*,0,0)$ is a smooth curve $\Gamma_1=\{(d(s),u_d(s,\cdot),v_d(s,\cdot):s\in(0,\delta)\}$ satisfying $d(0)=d^*$ with $d(s)=d^*+d'(0)s+s^2 z_0(s)$, $(u_d(s,\cdot),v_d(s,\cdot))=s(1,M)\phi_1(\cdot)+s^2(z_1(s,\cdot),z_2(s,\cdot))$, $z_0:(0,\delta)\to \R$ and $z_1,z_2:(0,\delta)\to X$ are smooth functions satisfying $z_i(0)=0$ for $i=0,1,2$.  Furthermore, $d'(0)$ can be calculated by (see, for example \cite{Shi1999}),
\begin{equation*}
\begin{split}
\ds d'(0)=&-\frac{\langle \zeta,W_{(u,v)(u,v)}(\lambda,d^*,0,0)((1,M)^T\phi_1(x))^2\rangle}
{2\langle \zeta, W_{d(u,v)}(\lambda,d^*,0,0)(1,M)^T\phi_1(x)\rangle}\\
\ds =&-\frac{\langle \zeta,(p+2qM+rM^2,l/\tau)\phi_1^2(x)\rangle}{2\langle \zeta,-\lambda_1(1,M)\phi_1(x)\rangle}\\
\ds =&\frac{
[k(p+2qM+rM^2)+Mbl]
\ds\int_{\Omega}\phi_1^3(x)dx}{\ds 2\lambda_1(k+M^2bl)\int_{\Omega}\phi^2_1(x)dx},
  \end{split}
\end{equation*}
where $\zeta$ is a linear function on $Y^2$ defined as
\begin{equation*}\langle \zeta,[f_1,f_2]\rangle=\int_{\Omega}\left(f_1(x)+f_2(x)\frac{Mb\tau}{k}\right)\phi_1(x)dx.
\end{equation*}
Obviously, if $d'(0)>0~(\text{resp.}\; d'(0)<0)$, the $d(s)>d^*~(\text{resp.}\; d(s)<d^*)$ for $s\in (0,\delta_1)$, and  nonconstant positive solutions exist for $d\in(d^*,d^*+\epsilon)~(\text{resp.}\; d\in(d^*-\epsilon,d^*))$. 
\end{proof}

We notice that the bifurcation point $d=d^*(\tau)$ depends on the parameter $\tau$ (which  is related to the delay in the original spatiotemporal model). We can characterize the bifurcation point (or threshold diffusion rate) $d=d^*(\tau)$ in more details:

\begin{proposition}\label{lem10}
Suppose that the conditions {\bf{(A1)}}-{\bf{(A3)}} hold, and let $d^*(\tau)$ be the bifurcation point defined in Theorem \ref{thm:3.1}. Then
  \begin{enumerate}
   \item if $b=0$, then $d^*(\tau)=\ds\frac{a}{\lambda_1}$ which is independent of $\tau$;

    \item if $a\leq0$, then $d^*(\tau)$ is strictly decreasing in $\tau$;

     \item if $a>0$ and $b>0$, then $d^*(\tau)$ is strictly decreasing in $\tau$; if $a>0$ and $b<0$, then $d^*(\tau)$ is strictly increasing in $\tau$;
    \item
    \begin{equation*}
    \lim\limits_{\tau\rightarrow0^+}d^*(\tau)=\frac{a+bk}{\lambda_1},
    ~~\lim\limits_{\tau\rightarrow+\infty}d^*(\tau)=\frac{1}{2\lambda_1}(a+|a|)=
    \begin{cases}
      0,&\textrm{ if}\; a<0,\\
      \ds\frac{a}{\lambda_1},&\textrm{ if}\; a>0.
    \end{cases}
    \end{equation*}
  \end{enumerate}
\end{proposition}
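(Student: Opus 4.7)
The plan is to treat $d^*(\tau)=\mu_1(\tau)/\lambda_1$ throughout, where $\mu_1(\tau)$ is the positive root of the characteristic polynomial of $A$, so that monotonicity and limits of $d^*$ reduce to the corresponding properties of $\mu_1$. I would first dispose of part (1) by a direct substitution: when $b=0$, assumption \textbf{(A3)} forces $a>0$, hence $|a\tau+1|=a\tau+1$, the square root collapses, and \eqref{40} simplifies at once to $a/\lambda_1$.

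For the monotonicity claims (2) and (3), rather than differentiating the messy closed form \eqref{40}, I would multiply the characteristic equation through by $\tau$ to obtain
\begin{equation*}
\tau\mu^2 - (a\tau-1)\mu - (a+bk) = 0,
\end{equation*}
differentiate implicitly in $\tau$, and solve for $\mu_1'(\tau)$ to get
\begin{equation*}
\mu_1'(\tau) \;=\; \frac{\mu_1(a-\mu_1)}{2\tau\mu_1 - a\tau + 1}.
\end{equation*}
The key observation is that the denominator equals $\sqrt{(a\tau+1)^2+4b\tau k}$ by the explicit formula for $\mu_1$, hence is strictly positive; since $\mu_1>0$ too, the sign of $\mu_1'$ agrees with the sign of $a-\mu_1$. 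Rewriting
\begin{equation*}
a-\mu_1 \;=\; \frac{1}{2\tau}\Bigl(a\tau+1-\sqrt{(a\tau+1)^2+4b\tau k}\Bigr),
\end{equation*}
the sign is controlled purely by the sign of $b$ once we know $a\tau+1>0$. To secure the latter I would use \textbf{(A3)}: if $b<0$ then $a>0$ (so $a\tau+1>0$ automatically), giving $a-\mu_1>0$ and $d^*$ strictly increasing; if $b>0$ then $\sqrt{(a\tau+1)^2+4b\tau k}>|a\tau+1|\ge a\tau+1$ regardless of the sign of $a\tau+1$, giving $a-\mu_1<0$ and $d^*$ strictly decreasing. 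Case (2), $a\le 0$, falls under $b>0$ by \textbf{(A3)}, so it is already covered.

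For the limits in (4), as $\tau\to 0^+$ I would Taylor expand $\sqrt{(a\tau+1)^2+4b\tau k}=\sqrt{1+2(a+2bk)\tau+O(\tau^2)} = 1+(a+2bk)\tau+O(\tau^2)$ and substitute into \eqref{40} to get $d^*(\tau) \to (a+bk)/\lambda_1$. For $\tau\to+\infty$ with $a\neq 0$ I would factor out $|a|\tau$ inside the radical and expand to two orders,
\begin{equation*}
\sqrt{(a\tau+1)^2+4b\tau k} \;=\; |a|\tau + \frac{a+2bk}{|a|} + O(\tau^{-1}),
\end{equation*}
then split on the sign of $a$: for $a>0$ the leading cancellation leaves $d^*(\tau)\to a/\lambda_1$, while for $a<0$ the two $a\tau$ terms cancel and the numerator stays bounded while the denominator grows, giving $d^*(\tau)\to 0$. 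The remaining edge case $a=0$ is immediate since then $d^*(\tau)=(-1+\sqrt{1+4bk\tau})/(2\lambda_1\tau)\sim \sqrt{bk/\tau}/\lambda_1\to 0$, matching $(a+|a|)/(2\lambda_1)=0$.

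The only step requiring real care is the $\tau\to\infty$ asymptotics, because the two leading $a\tau$ terms in the numerator of \eqref{40} cancel and one must go to the next order of the expansion to recover the correct limit, with a sign switch between $a>0$ and $a<0$; the rest of the argument is routine algebra once the implicit differentiation identity above is in hand.
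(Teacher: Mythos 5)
Your proof is correct, but your route to the monotonicity statements (2)--(3) is genuinely different from the paper's. The paper rationalizes the closed form, writing $d^*(\tau)=\frac{2(a+bk)}{\lambda_1 L(\tau)}$ with $L(\tau)=1-a\tau+\sqrt{(a\tau+1)^2+4b\tau k}$, and then differentiates $L$ directly, rationalizing once more to get $L'(\tau)=\frac{4bk(a+bk)}{\sqrt{(a\tau+1)^2+4b\tau k}\,[a(a\tau+1)+2bk+a\sqrt{(a\tau+1)^2+4b\tau k}]}$, so that the sign of $L'$ is read off from the sign of $b$ (using $a+bk>0$ and $k>0$). You instead differentiate the characteristic polynomial $\tau\mu^2-(a\tau-1)\mu-(a+bk)=0$ implicitly, obtaining $\mu_1'(\tau)=\mu_1(a-\mu_1)/\sqrt{(a\tau+1)^2+4b\tau k}$, where the denominator is the discriminant square root (strictly positive since the discriminant equals $(a\tau-1)^2+4\tau(a+bk)>0$ under \textbf{(A3)}), so that $\operatorname{sign}\mu_1'=\operatorname{sign}(a-\mu_1)$, which is then decided by comparing $\sqrt{(a\tau+1)^2+4b\tau k}$ with $a\tau+1$; this buys a cleaner sign criterion with no messy quotient-rule algebra, and it makes strictness transparent ($a=\mu_1$ forces $b=0$). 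Your case bookkeeping is sound: \textbf{(A3)} forces $a>0$ when $b=0$ (part 1) and $b>0$ when $a\le 0$ (part 2), and for $b>0$ your inequality $\sqrt{(a\tau+1)^2+4b\tau k}>|a\tau+1|\ge a\tau+1$ correctly avoids any assumption on the sign of $a\tau+1$. One small remark on part (4): your closing caution about the $\tau\to+\infty$ limit is overstated, and your two-term expansion is more work than needed. As in the paper, rewriting
\begin{equation*}
d^*(\tau)=\frac{1}{2\lambda_1}\left(a-\frac{1}{\tau}+\sqrt{\Bigl(a+\frac{1}{\tau}\Bigr)^2+\frac{4bk}{\tau}}\right)
\end{equation*}
reduces the limit to continuous substitution, yielding $\frac{1}{2\lambda_1}(a+|a|)$ with no indeterminate cancellation for either sign of $a$ (and this also absorbs your separately treated edge case $a=0$); your $\tau\to 0^+$ expansion matches the paper's direct computation via $L(\tau)$.
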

\begin{proof}
1. It is easy to verify that $d^*(\tau)=\ds \frac{a}{\lambda_1}$ when $b=0$ from \eqref{40}.

2. Note that $d^*(\tau)=\ds\frac{2(a+bk)}{\lambda_1L(\tau)}$ where $L(\tau)=1-a\tau+\sqrt{(a\tau+1)^2+4b\tau k}$.
Then \begin{equation}\label{L}
    L'(\tau)=\frac{a(a\tau+1)+2bk-a\sqrt{(a\tau+1)^2+4b\tau k}}{\sqrt{(a\tau+1)^2+4b\tau k}}>0,
  \end{equation}
  as $a\leq0$, $b>0$, $k>0$ and $a+bk>0$ from assumptions and {\bf{(A2)}}, {\bf{(A3)}}. Thus $d^*(\tau)$ is decreasing in $\tau$.

3. If $a>0$ and $b>0$, from \eqref{L} we have
    \begin{equation}\label{L2}
    L'(\tau)=\frac{4bk(a+bk)}{\sqrt{(a\tau+1)^2+4b\tau k}[a(a\tau+1)+2bk+a\sqrt{(a\tau+1)^2+4b\tau k}]}>0,
  \end{equation}
so $d^*(\tau)$ is decreasing in $\tau$. On the other hand, if $a>0$, $b<0$ and $a+bk>0$, then $L'(\tau)<0$ from \eqref{L2} and $d^*(\tau)$ is increasing in $\tau$.

4. \begin{equation*}
\begin{split}
\lim\limits_{\tau\rightarrow0^+}d^*(\tau)&=\lim\limits_{\tau\rightarrow0^+}\frac{2(a+bk)}{\lambda_1L(\tau)}=\frac{a+bk}{\lambda_1},\\
\lim\limits_{\tau\rightarrow+\infty}d^*(\tau)&
=\lim\limits_{\tau\rightarrow+\infty}\frac{1}{2\lambda_1}\left(a-\frac{1}{\tau}+\sqrt{(a+\frac{1}{\tau})^2+\frac{4bk}{\tau}}\right)
=\frac{1}{2\lambda_1}(a+|a|).
   \end{split}
 \end{equation*}
 \end{proof}


\section{Stability of bifurcating steady states }
In Section 3, we have shown that for fixed $\tau$, non-constant steady state solutions $(d(s),u(s),v(s))\in \Gamma_1$ bifurcate from the line of trivial solutions near $d=d^*$ under the conditions {\bf (A1)}-{\bf (A3)}. In this section, we investigate the local stability of the bifurcating steady state solutions by applying the method in \cite{crandall1973}.

Consider an equation:
\begin{equation*}\label{48}
  W(d,u,v)=0,
\end{equation*}
where $W:\mathscr{S}\times V\rightarrow Y$ is a twice continuously Fr$\acute{e}$chet differentiable mapping and $X,Y$ are Banach spaces; $V$ is an open neighborhood of $(0,0)$ in $X$, $\mathscr{S}=(a,b)\subset \R$. We first recall some necessary definitions and results in \cite{crandall1973}.
\begin{definition}\cite[Definition 1.2]{crandall1973}
  Let $T,K\in B(X,Y)$, where $B(X,Y)$ denotes the set of bounded linear maps from $X$ to $Y$. Then $\mu\in\R$ is a $K-$simple eigenvalue of $T$ if
  \begin{equation*}
    dim N(T-\mu K)=codim R(T-\mu K)=1,
  \end{equation*}
  and if $N(T-\mu K)=span\{x_0\}$, $Kx_0\not\in R(T-\mu K)$.
\end{definition}
In our case, for $X=W^{2,p}(\Om)\cap W^{1,p}_0(\Om)$ and $Y=L^p(\Om)$, the mapping $K:X\to Y$ is simply the inclusion map $K(u)=u$.
Then the Theorem of Exchange of Stability in \cite[Theorem 1.16]{crandall1973} can be stated as follows adapting to \eqref{8}.
\begin{theorem}\label{49}
Assume the conditions in Theorem \ref{thm:3.1} are satisfied, and let $\Gamma_0,\Gamma_1$ be the line of trivial solutions and the curve of non-constant solutions of \eqref{8}.
Then the following results are true:
  \begin{enumerate}
    \item There exist open neighbourhoods $\tilde{\mathscr{I}},~\tilde{\mathscr{J}}$ of $d^*$ and $0$ and continuously differentiable functions $r:\tilde{\mathscr{I}}\rightarrow\R,~\mu:\tilde{\mathscr{J}}\rightarrow\R,~z:\tilde{\mathscr{I}}\rightarrow X,~w:\tilde{\mathscr{J}}\rightarrow X$ satisfying
        \begin{equation*}
        \begin{split}
          W_{(u,v)}(d,0,0)z(d)=r(d)K z(d),~~&d\in\tilde{\mathscr{I}},\\
          W_{(u,v)}(d(s),u(s,\cdot),v(s,\cdot))w(s)=\mu(s)Kw(s),~~&s\in\tilde{\mathscr{J}},
          \end{split}
        \end{equation*}
    where $r(d^*)=\mu(0)=0$, $K:X\to Y$ is defined by $K(u)=u$.
    \item $r'(d^*)\neq0$ and near $s=0$, $\mu(s)$ and $-sd'(s)r'(d^*)$ have the same zeros and the same sign whenever $\mu(s)\neq0$. More precisely,
        \begin{equation*}
          \lim\limits_{s\rightarrow0}\frac{-sd'(s)r'(d^*)}{\mu(s)}=1.
        \end{equation*}
  \end{enumerate}
\end{theorem}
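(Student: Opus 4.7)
The plan is to apply the abstract Exchange of Stability theorem \cite[Theorem 1.16]{crandall1973} to the nonlinear mapping $W$ defined in \eqref{W}, with bifurcation parameter $d$ at the point $d=d^*(\tau)$ and the inclusion $K:X\hookrightarrow Y$, $K(u)=u$, playing the role of the auxiliary operator appearing in that abstract framework.

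To invoke \cite[Theorem 1.16]{crandall1973} three hypotheses are required, and all of them are byproducts of the local bifurcation analysis already completed in the proof of Theorem \ref{thm:3.1}. First, $W$ is twice continuously Fr\'echet differentiable on $\R\times X_\delta^2$ by assumption \textbf{(A1)}. Second, $0$ is a $K$-simple eigenvalue of $W_{(u,v)}(d^*,0,0)$: the kernel is spanned by $(1,M)\phi_1$, the range has codimension one, and since $K[(1,M)\phi_1]=(1,M)\phi_1$ and
\begin{equation*}
\langle \zeta, (1,M)\phi_1\rangle = \frac{1}{k}\int_\Omega (k+M^2 b\tau)\phi_1^2\,dx \neq 0,
\end{equation*}
with $\zeta$ the range-annihilating functional already introduced in Section 3, the vector $K[(1,M)\phi_1]$ does not lie in $R(W_{(u,v)}(d^*,0,0))$. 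Third, the transversality condition $W_{d(u,v)}(d^*,0,0)[(1,M)\phi_1]=-\lambda_1(1,M)\phi_1\notin R(W_{(u,v)}(d^*,0,0))$ was already checked in Theorem \ref{thm:3.1}. The first conclusion of \cite[Theorem 1.16]{crandall1973} then produces the smooth $K$-relative eigenpairs $(r(d),z(d))$ along $\Gamma_0$ and $(\mu(s),w(s))$ along $\Gamma_1$, with $r(d^*)=\mu(0)=0$.

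Next, to establish $r'(d^*)\neq 0$, I would differentiate the defining identity $W_{(u,v)}(d,0,0)z(d)=r(d)Kz(d)$ in $d$ at $d=d^*$, substitute $r(d^*)=0$ and $z(d^*)=(1,M)\phi_1$, and pair the resulting equation with $\zeta$ to obtain
\begin{equation*}
r'(d^*)\langle\zeta,(1,M)\phi_1\rangle = \langle \zeta, W_{d(u,v)}(d^*,0,0)[(1,M)\phi_1]\rangle = -\lambda_1 \langle\zeta,(1,M)\phi_1\rangle,
\end{equation*}
so $r'(d^*)=-\lambda_1<0$. The second conclusion of \cite[Theorem 1.16]{crandall1973} then yields directly the asymptotic relation $\lim_{s\to 0}\mu(s)/[-sd'(s)r'(d^*)]=1$, from which $\mu(s)$ and $-sd'(s)r'(d^*)=\lambda_1 sd'(s)$ share the same zeros and the same sign in a neighborhood of $s=0$.

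The proof poses no serious conceptual obstacle: the Crandall-Rabinowitz machinery does the heavy lifting once the $K$-simple eigenvalue structure is identified. The only subtle point is the role of $K$, because $X\subsetneq Y$ with different topologies forces the eigenvalue equations to be formulated relative to the inclusion rather than as ordinary eigenvalue problems on a single space. The spectral quantity $\mu(s)$ that governs the linear stability of the bifurcating branch is thus the leading $K$-relative eigenvalue of the linearization of $W$ along $\Gamma_1$ at parameter $d=d(s)$, and its sign, via the asymptotic formula above together with $r'(d^*)<0$, is determined by the sign of $d'(s)$ and hence ultimately by the direction of bifurcation $d'(0)$ computed in \eqref{first}.
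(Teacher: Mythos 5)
Your proposal is correct and takes essentially the same route as the paper: Theorem \ref{49} is presented there as a direct adaptation of \cite[Theorem 1.16]{crandall1973}, with the required $K$-simple eigenvalue structure (kernel $\mathrm{span}\{(1,M)\phi_1\}$, codimension-one range annihilated by $\zeta$, and $\langle\zeta,(1,M)\phi_1\rangle\neq 0$ since $k+M^2b\tau>0$) and the transversality condition exactly as verified in the proof of Theorem \ref{thm:3.1}, just as you invoke them. Your duality derivation of $r'(d^*)=-\lambda_1$ by differentiating the eigenvalue identity and pairing with $\zeta$ is a harmless variant of the paper's computation, which instead notes in the proof of Theorem \ref{50} that $z(d)=(1,M)\phi_1$ is an exact eigenfunction along the trivial branch with $r(d)=-d\lambda_1+a+bM$, yielding the same value.
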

Then we have the following stability result for \eqref{6} by applying Theorem \ref{49}:
\begin{theorem}\label{50}
Assume the conditions in Theorem \ref{thm:3.1} are satisfied. Then
  \begin{enumerate}
   \item when $k(p+2qM+rM^2)+Mbl<0$, the positive steady state solution $(u(s,\cdot),v(s,\cdot))$ obtained in Theorem \ref{thm:3.1} is locally asymptotically stable with respect to \eqref{6} for $s\in (0,\delta_2)$ and $d(s)\in(d^*-\epsilon,d^*)$;

    \item when $k(p+2qM+rM^2)+Mbl>0$, the positive steady state solution $(u(s,\cdot),v(s,\cdot))$ obtained in Theorem \ref{thm:3.1} is unstable with respect to \eqref{6} for $s\in (0,\delta_2)$ and $d(s)\in(d^*,d^*+\epsilon)$.
        \end{enumerate}
\end{theorem}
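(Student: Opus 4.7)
The plan is to apply the Crandall--Rabinowitz exchange of stability framework (Theorem \ref{49}) by identifying the signs of two scalar quantities: the derivative $r'(d^*)$ of the simple eigenvalue of the linearization along the trivial branch $\Gamma_0$, and the tangent slope $d'(0)$ of the bifurcating branch $\Gamma_1$. Once both signs are in hand, the identity $\lim_{s\to 0}\frac{-s\,d'(s)\,r'(d^*)}{\mu(s)}=1$ in Theorem \ref{49}(2) pins down the sign of $\mu(s)$ for small $s>0$, and the principle of linearized stability for the parabolic system \eqref{6} then yields the two claimed conclusions.

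To compute $r'(d^*)$, I would differentiate the defining relation $W_{(u,v)}(d,0,0)z(d)=r(d)Kz(d)$ at $d=d^*$. Since $z(d^*)=(1,M)\phi_1$ and $\Delta\phi_1=-\lambda_1\phi_1$, this gives $W_{(u,v)}(d^*,0,0)z'(d^*)=(r'(d^*)+\lambda_1)(1,M)\phi_1$. Testing against the adjoint null function $(1,Mb\tau/k)\phi_1$ already identified in the proof of Theorem \ref{thm:3.1} and using $k+M^2b\tau>0$, the Fredholm solvability condition forces $r'(d^*)=-\lambda_1<0$. Combining this with the explicit formula \eqref{first} for $d'(0)$, whose denominator is strictly positive, we see that $-s\,d'(s)\,r'(d^*)=s\lambda_1 d'(s)$ has for small $s>0$ the same sign as $k(p+2qM+rM^2)+Mbl$. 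Hence $\mu(s)<0$ in case (1), where moreover $d'(0)<0$ so $d(s)<d^*$, and $\mu(s)>0$ in case (2), where $d'(0)>0$ so $d(s)>d^*$, matching the intervals stated in the theorem.

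The main obstacle is justifying that the sign of $\mu(s)$ really controls stability, i.e.\ verifying that the remainder of the spectrum of the full linearization at $(u(s,\cdot),v(s,\cdot))$ lies in the open left half-plane. To handle this I would first analyze $W_{(u,v)}(d^*,0,0)$ by expanding in the Dirichlet eigenbasis $\{\phi_n\}$: on each Fourier mode the reduced $2\times 2$ matrix has trace $a-2d^*\lambda_n-1/\tau$ and a determinant quadratic in $d^*\lambda_n$ vanishing precisely at $d^*\lambda_1=\mu_1$. Using $\mu_1>0$, which follows from \textbf{(A3)}, one checks that for $n\geq 2$ the trace is negative and the determinant is positive, while the second eigenvalue in the $n=1$ block is $-\sqrt{(a\tau+1)^2+4b\tau k}/\tau<0$. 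A standard perturbation/compactness argument then transfers this spectral gap to $W_{(u,v)}(d(s),u(s,\cdot),v(s,\cdot))$ for small $s$, so only $\mu(s)$ crosses the imaginary axis; the principle of linearized stability for semilinear parabolic systems then delivers the asymptotic stability in case (1) and the instability in case (2).
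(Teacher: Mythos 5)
Your proposal is correct and follows essentially the same route as the paper: both apply the exchange-of-stability result (Theorem \ref{49}) together with the sign of $d'(0)$ from \eqref{first}, the paper obtaining $r'(d^*)=-\lambda_1$ by observing that $z(d)=(1,M)\phi_1$ is an exact eigenfunction of $W_{(u,v)}(d,0,0)$ with $r(d)=-d\lambda_1+a+bM$ linear in $d$, while you reach the same value by differentiating the eigenvalue relation and invoking Fredholm solvability against the adjoint null function $(1,Mb\tau/k)\phi_1$ with $k+M^2b\tau>0$. Your additional mode-by-mode verification that the rest of the spectrum of $W_{(u,v)}(d^*,0,0)$ lies strictly in the open left half-plane (and persists under perturbation for small $s$) is correctly carried out and fills in a step the paper leaves implicit in its appeal to \cite{crandall1973}.
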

\begin{proof} From \eqref{first}, $k+M^2 b\tau>0$ and that $\phi_1>0$, we have ${\rm Sign}(d'(0))={\rm Sign}[k(p+2qM+rM^2)+Mbl]$. On the other hand, it is easy to see that from (\ref{51}), for $z(d)=s(1,M)^T\phi_1(x)$, we have \begin{equation*}
    W_{(u,v)}(d,0,0)z(d)=s\left(\begin{array}{c}d\Delta\phi_1(x)+a\phi_1(x)+bM\phi_1(x)\\Md\Delta\phi_1(x)+\ds\frac{1}{\tau}(k\phi_1(x)-M\phi_1(x))\end{array}\right)=
    r(d)s\left(\begin{array}{c}1\\M\end{array}\right)\phi_1(x).
\end{equation*}
That is, $r(d)=-d\lambda_1+a+bM$ hence $r'(d^*)=-\lambda_1<0$. Therefore from Theorem \ref{49} part 2, we have ${\rm Sign} (\mu(s))={\rm Sign}(d'(s))={\rm Sign}(d'(0))={\rm Sign}[k(p+2qM+rM^2)+Mbl]$ for $s\in (0,\delta_2)$. In particular, when $k(p+2qM+rM^2)+Mbl<0$, $\mu(s)<0$  and $(u(s,\cdot),v(s,\cdot))$  is locally asymptotically stable with respect to \eqref{6}; and when $k(p+2qM+rM^2)+Mbl>0$, $\mu(s)>0$ and  $(u(s,\cdot),v(s,\cdot))$  is unstable.
\end{proof}

The stability result in Theorem \ref{50} implies the non-occurrence of Hopf bifurcations when the parameter $(d,\tau)$ is in the range described in Theorem \ref{50}.
\begin{corollary}\label{cor:44}
Suppose that $\tau>0$ is fixed and the conditions {\bf (A1)}-{\bf (A3)} are satisfied, and let $d^*(\tau)$ be defined as in \eqref{40}. Then  there is no Hopf bifurcation occurring for the positive steady state $d\in(d^*(\tau)-\epsilon(\tau),d^*(\tau))$ when $k(p+2qM+rM^2)+Mbl<0$.
\end{corollary}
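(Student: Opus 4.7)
The plan is to deduce Corollary \ref{cor:44} directly from Theorem \ref{50} via a short contradiction argument. The guiding principle I will invoke is that a Hopf bifurcation at a steady state requires the linearization there to possess a pair of purely imaginary complex-conjugate eigenvalues, and this is incompatible with the local asymptotic stability of that same steady state.

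Concretely, I would suppose toward contradiction that a Hopf bifurcation occurs at some $d_H \in (d^*(\tau) - \epsilon(\tau), d^*(\tau))$ along the branch $\Gamma_1$ of positive steady states produced by Theorem \ref{thm:3.1}, say at the state $(u_d(s_H, \cdot), v_d(s_H, \cdot))$. By the standard definition of Hopf bifurcation for semilinear parabolic systems, the linearization $W_{(u,v)}(d_H, u_d(s_H,\cdot), v_d(s_H,\cdot))$ must then carry a pair of nonzero purely imaginary eigenvalues crossing the imaginary axis transversally. On the other hand, Theorem \ref{50} part 1 asserts that under the hypothesis $k(p+2qM+rM^2)+Mbl<0$ the steady state $(u_d(s_H,\cdot), v_d(s_H,\cdot))$ is locally asymptotically stable with respect to \eqref{6}, which is to say every eigenvalue of this same linearization has strictly negative real part. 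This is the desired contradiction, and the corollary follows.

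The point that requires care is that Theorem \ref{50} is proved through the Crandall--Rabinowitz exchange-of-stability principle, which on its face tracks only the simple principal eigenvalue $\mu(s)$ that crosses zero at $d = d^*(\tau)$. To upgrade this to the full spectral statement demanded by the phrase \emph{locally asymptotically stable}, I would note that at $(d^*(\tau), 0, 0)$ the spectrum of $W_{(u,v)}(d^*(\tau), 0, 0)$ is discrete (the operator being $d^*(\tau)\Delta$ plus a bounded multiplicative perturbation, hence sectorial with compact resolvent) and the eigenvalues other than the simple zero eigenvalue are bounded away from the imaginary axis. By continuous dependence of the spectrum on $d$ and on the reference state $(u,v)$, these remaining eigenvalues stay in the open left half-plane for all $d$ close enough to $d^*(\tau)$ and all $s$ close enough to $0$; this is exactly the neighborhood that implicitly fixes the constants $\delta_2$ and $\epsilon(\tau)$ in Theorem \ref{50}. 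With this observation in place, no eigenvalue of the linearization along the branch can approach the imaginary axis inside the asserted interval, and so no Hopf bifurcation can take place there. I do not anticipate any further technical obstacle.
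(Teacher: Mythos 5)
Your proposal is correct and takes essentially the same route as the paper: the corollary is presented there as an immediate consequence of Theorem \ref{50}, i.e.\ precisely the observation that local asymptotic stability of $(u_d(s,\cdot),v_d(s,\cdot))$ for $d\in(d^*(\tau)-\epsilon(\tau),d^*(\tau))$ rules out the purely imaginary eigenvalues a Hopf bifurcation would require. Your additional remark upgrading the Crandall--Rabinowitz tracking of the single critical eigenvalue $\mu(s)$ to a full spectral statement (the remaining eigenvalues, being $-d\lambda_n+\mu_1$ and $-d\lambda_n+\mu_2$ at the trivial state, are bounded away from the imaginary axis and persist in the left half-plane by perturbation) is a sound filling-in of a detail the paper leaves implicit.
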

One should be cautious that the results in Corollary \ref{cor:44} is obtained for a fixed $\tau>0$ and $d^*(\tau)$, $\epsilon(\tau)$ both depend on the value of $\tau$. In other situations especially the discrete delay case, the steady state is independent of delay and Hopf bifurcation could occur when the delay value increases \cite{BusenbergHuang1996,ChenShi2012,Su2009}.

\section{Global bifurcation of steady states }
In Section 3, we only consider the existence of positive steady state solutions of (\ref{8}) near the bifurcation points using local bifurcation theory. Next we consider the global bifurcation of positive steady states of (\ref{8}) in two different cases. Here we assume $F$ and $H$ satisfy the following condition not restricted to neighborhoods of zeros:
\begin{enumerate}
   \item[\bf(A1')]  $F:  \R^+\times \R^+ \to \R$ and $H:\R^+\to \R$ are $C^2$ functions.
\end{enumerate}

\subsection{Case 1: $a=F_u(0,0)>0$.}
Here we further assume the following condition holds:
\begin{enumerate}
   \item[\bf(A4)] There exist a continuous function $F_1:\bar{\R}_+\rightarrow \R$ and positive constants $K_0>0$ and $u^*>0$ such that $F(u,v)\leq F_1(u)u$ for $(u,v)\in\bar{\R}_+\times\bar{\R}_+$, and $F_1$ satisfies $F_1(u^*)=0$ and $0<F_1(u)<K_0$ for $u\in(0,u^*)$ and $F_1(u)<0$ for $u>u^*$.
%
\end{enumerate}

First we have the following \textit{a priori} bound for the steady state solutions when {\bf (A4)} is satisfied.
\begin{lemma}\label{lem5}
Suppose the conditions {\bf(A1'), (A2), (A4)} hold and $(u,v)$ is a nonnegative solution of (\ref{8}). Then
\begin{equation}\label{H*}
    0\leq u(x)\leq u^*,\;\; 0\leq v(x)\leq \max_{0\leq u\leq u^*} H(u):=H^*,
\end{equation}
where $u^*$ is defined in {\bf (A4)}.
\end{lemma}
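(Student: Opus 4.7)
The lower bounds $0\le u$ and $0\le v$ hold by hypothesis, so only the upper bounds $u\le u^*$ and $v\le H^*$ need to be established. The plan is to decouple the system and apply the weak maximum principle to each equation in turn: first to the $u$-equation to derive $u\le u^*$ using the structural inequality \textbf{(A4)}, then to the $v$-equation, where the already-obtained $L^\infty$ bound on $u$ controls the source term $\tau^{-1}H(u)$.

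For the first step, I would evaluate the $u$-equation at a point $x_0\in\cOm$ where $u$ attains its maximum. If $x_0\in\partial\Om$ then $u\equiv 0$ on $\cOm$ by the Dirichlet condition together with nonnegativity, and $u\le u^*$ is trivial. Otherwise $x_0$ is an interior point, so $\Delta u(x_0)\le 0$ and the first equation of \eqref{8} gives $F(u(x_0),v(x_0))\ge 0$. Combining with \textbf{(A4)} yields $F_1(u(x_0))\,u(x_0)\ge 0$, and the prescribed sign pattern of $F_1$ (positive on $(0,u^*)$, zero at $u^*$, negative beyond) forces $u(x_0)\le u^*$. The regularity needed to run this pointwise argument at an interior maximum is $C^1(\cOm)$, which is automatic from the Sobolev embedding $W^{2,p}(\Om)\hookrightarrow C^1(\cOm)$ for $p>n$ built into the functional setting.

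For the second step, I would rewrite the $v$-equation as $-d\Delta v+\tau^{-1}v=\tau^{-1}H(u)$, which is a scalar linear elliptic equation with strictly positive zero-order coefficient. Because $u$ now ranges in $[0,u^*]$ and $H$ is continuous on that compact interval, the right-hand side is bounded above pointwise by $\tau^{-1}H^*$; evaluating at an interior maximizer of $v$ (or invoking the standard weak maximum principle for the operator $-d\Delta+\tau^{-1}I$ directly) yields $v\le H^*$. One should note in passing that $H^*\ge 0$, since $H(0)=0$ and $0\in[0,u^*]$ by \textbf{(A2)}, so the stated bound is meaningful.

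There is no real obstacle here; condition \textbf{(A4)} is engineered precisely so that the one-line maximum-principle argument for $u$ closes, and once $u\le u^*$ is in hand the bound on $v$ is a textbook application of the weak maximum principle. The only points requiring mild care are the boundary-versus-interior case split for the location of the maximum and the verification that the solution has enough regularity to apply the pointwise form of the maximum principle, both of which are immediate from the $W^{2,p}$ framework already fixed in Section~3.
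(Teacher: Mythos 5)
Your proposal is correct and takes essentially the same approach as the paper's proof: evaluate the first equation at a maximizer of $u$, use the sign structure of $F_1$ from \textbf{(A4)} to conclude $u\le u^*$, and then bound $v$ via the maximum principle applied to $-d\Delta v+\tau^{-1}v=\tau^{-1}H(u)$ with the source controlled by $H^*$. The only cosmetic difference is that the paper dispatches the trivial case $u\equiv 0$ up front and uses the strong maximum principle to place the maximizer of $u$ in the interior, whereas you achieve the same via your boundary-versus-interior case split.
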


\begin{proof}
If $u(x)\equiv 0$, then $v(x)\equiv 0$ and the result is obviously true. Hence we assume that $u(x)>0$ for $x\in \Omega$ from the maximum principle. Let $x_0\in\Omega$ such that $u(x_0)=\max\limits_{x\in\bar{\Omega}}u(x)>0$. Then  from the maximum principle,  the first equation of (\ref{8}) and {\bf (A4)}, we have that
\begin{equation*}
0\leq-d\Delta u(x_0)=F(u(x_0),v(x_0))\leq F_1(u(x_0))u(x_0).
\end{equation*}
This implies that $F_1(u(x_0))\geq0$, and from {\bf (A4)}, we have $0<u(x_0)\leq u^*$ and consequently $0<u(x)<u^*$ in $\Omega$ from the strong maximum principle.

Since $u(x)>0$, then $v(x)=(-d\Delta+\tau^{-1})^{-1}(\tau^{-1}H(u))>0$ for $x\in \Omega$. Let $x_1\in\Omega$ such that $v(x_1)=\max\limits_{x\in\bar{\Omega}}v(x)>0$. Then  from the maximum principle and  the second equation of (\ref{8}),
we have that
\begin{equation*}
0\leq-d\Delta v(x_1)=\frac{1}{\tau}[H(u(x_1))-v(x_1)],
\end{equation*}
which implies that $\ds v(x_1)\leq H(u(x_1))\leq \max_{0\leq u\leq u^*} H(u):=H^*$ as $0\leq u(x_1)\leq u^*$.
\end{proof}

Denote the set of positive solutions of (\ref{8}) by
\begin{equation*}
  \Sigma=\{(d,u,v)\in \R\times X^2: d>0, u>0, v>0, W(d,u,v)=(0,0)\},
\end{equation*}
where $W$ is defined in \eqref{W}.
We have  the following result on the global bifurcation of positive solutions of \eqref{8} when $a>0$.
\begin{theorem}\label{thm3}
Suppose that the conditions {\bf (A1')}, {\bf (A2)}-{\bf (A4)} hold and $a>0$. Then the following results are true:
  \begin{enumerate}
 \item (\ref{8}) has no positive solution when $d>d^{**}:=K_0/\lambda_1$;

 \item there exists a connected component $\Sigma_1$ of $\Sigma$ such that $\Gamma_1\subseteq\Sigma_1$, the projection $P_d\Sigma_1$ of $\Sigma_1$ into the $d-$component satisfies $P_d\Sigma_1=(0,d_0)$ for some $d_0\in[d^*,d^{**})$, and for every $(d,u,v)\in\Sigma_1$, $||u||_{\infty}+||v||_{\infty}\leq C$ for some $C>0$ independent of $d$.
\end{enumerate}

\end{theorem}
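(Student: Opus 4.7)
For Part 1 the plan is a direct eigenfunction test. Suppose $(u,v)$ is a positive solution of \eqref{8}. Multiplying the first equation by the principal eigenfunction $\phi_1>0$, integrating over $\Omega$, and using $u|_{\partial\Omega}=\phi_1|_{\partial\Omega}=0$ yield
\begin{equation*}
d\lambda_1\int_\Omega u\phi_1\,dx=\int_\Omega F(u,v)\phi_1\,dx\le \int_\Omega F_1(u)u\phi_1\,dx<K_0\int_\Omega u\phi_1\,dx,
\end{equation*}
where the strict inequality uses \textbf{(A4)} together with the a priori bound $0<u\le u^*$ from Lemma \ref{lem5}. Dividing by $\int_\Omega u\phi_1>0$ forces $d<K_0/\lambda_1=d^{**}$.

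For Part 2 the plan is to invoke the Rabinowitz global bifurcation theorem in the positive-cone form of Shi--Wang, applied to the operator $W$ of \eqref{W} regarded as a compact perturbation of the Fredholm linearization $W_{(u,v)}(d,0,0)$ on $X^2$. Theorem \ref{thm:3.1} identifies $d=d^*$ as a simple bifurcation point from the trivial branch $\Gamma_0$, produces the local curve $\Gamma_1$ of positive solutions, and asserts that $d^*$ is the \emph{only} point on $\Gamma_0$ from which positive solutions bifurcate. Let $\Sigma_1$ denote the connected component of $\overline{\Sigma}$ in $\R\times X^2$ containing $\Gamma_1$. The global alternative says $\Sigma_1$ is either unbounded in $\R\times X^2$ or returns to $\Gamma_0$ at a second bifurcation point, and the uniqueness in Theorem \ref{thm:3.1} excludes the second possibility, so $\Sigma_1$ must be unbounded.

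Next I would locate the unboundedness. Lemma \ref{lem5} gives the uniform bounds $\|u\|_\infty\le u^*$ and $\|v\|_\infty\le H^*$ throughout $\Sigma$, and Part 1 gives $d<d^{**}$ on $\Sigma$. Rewriting \eqref{8} as $-\Delta u=d^{-1}F(u,v)$ and $-\Delta v+(d\tau)^{-1}v=(d\tau)^{-1}H(u)$ and applying standard $W^{2,p}$ regularity, $\|(u,v)\|_{X^2}$ is controlled by the $L^\infty$ norms times a constant depending on $d^{-1}$. Hence if $d$ were bounded below by a positive constant along $\Sigma_1$, the whole component would be bounded in $\R\times X^2$, contradicting unboundedness. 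Therefore there is a sequence $(d_n,u_n,v_n)\in\Sigma_1$ with $d_n\to 0^+$. Since $P_d\Sigma_1$ is a connected subset of $(0,d^{**})$ containing $d^*$ and having $0$ as infimum, it is of the form $(0,d_0)$ for some $d_0\in[d^*,d^{**})$, and the uniform bound $\|u\|_\infty+\|v\|_\infty\le u^*+H^*=:C$ is immediate from Lemma \ref{lem5}.

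The main obstacle will be the positive-cone bookkeeping, namely verifying that positivity is preserved along $\Sigma_1$ so that Lemma \ref{lem5} and the uniqueness of the bifurcation point remain available at every point of the component. This will be handled by the strong maximum principle applied separately to each equation of \eqref{8}: any nonnegative nontrivial $u$ or $v$ is strictly positive in $\Omega$, so $\Sigma_1$ can only leave the nonnegative cone through the trivial branch, which by Theorem \ref{thm:3.1} only meets positive solutions at $d^*$ itself.
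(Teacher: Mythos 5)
Your Part 1 is exactly the paper's argument: test the first equation against $\phi_1$, use {\bf (A4)} to bound $F(u,v)\le F_1(u)u\le K_0 u$, and read off $d\le d^{**}$. The \textit{a priori} bound from Lemma \ref{lem5} is not actually needed for this step, and your strict-inequality refinement is a harmless strengthening.

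In Part 2 there is one genuine logical slip. For a component of \emph{positive} solutions the global alternative is not the two-way dichotomy you quote (unbounded, or return to $\Gamma_0$ at a second bifurcation point); it is a trichotomy, the third alternative being that $\overline{\Sigma_1}$ meets the boundary $\partial S$ of $S=\{(d,u,v)\in\R\times X^2: d>0,\,u>0,\,v>0\}$ --- and this third alternative is precisely the one the paper shows to occur. Your closing ``positive-cone bookkeeping'' paragraph attempts to dispose of it via the strong maximum principle, but that argument requires $d>0$: it cannot exclude an exit through the face $\{d=0\}$ of $\partial S$, i.e.\ a point $(0,\hat u,\hat v)\in\overline{\Sigma_1}$ with $(\hat u,\hat v)$ possibly nontrivial while $\Sigma_1$ itself stays bounded in $\R\times X^2$ (for instance if the solutions converge in $X^2$ as $d\to 0^+$). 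In that scenario your assertion that $\Sigma_1$ ``must be unbounded'' is false, and the contradiction you later draw from unboundedness is unavailable. The paper instead rules out alternatives (i) and (ii) --- unboundedness via the $L^\infty$ bounds of Lemma \ref{lem5} together with $0\le d\le d^{**}$, and a second bifurcation point via Theorem \ref{thm:3.1} --- then analyzes the boundary point: the strong maximum principle forces $\hat u\equiv 0$, hence $\hat v\equiv 0$, and $\hat d>0$ would recreate case (ii), so $\hat d=0$. Your gap is easily repaired, because the case you overlooked hands you your desired conclusion for free: an exit through $\{d=0\}$ directly produces $d_n\to 0^+$, after which your endgame ($P_d\Sigma_1$ connected, infimum $0$, containing $d^*$, bounded above by $d^{**}$, sup-norm bound $u^*+H^*$ from Lemma \ref{lem5}) goes through as written. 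One point in your favor: your regularity remark --- that the $X^2$-norm is controlled by the $L^\infty$ bound only with a constant that degenerates like $d^{-1}$ --- is exactly what is needed to make precise the paper's brief claim that the sup-norm bounds exclude alternative (i); it shows that any unboundedness of the component could occur only as $d\to 0^+$, which again feeds into the same conclusion $P_d\Sigma_1=(0,d_0)$ with $d_0\in[d^*,d^{**})$.
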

\begin{proof}
  1.  Suppose that $(d,u,v)$ is a positive solution of \eqref{8}. Multiplying the first equation of (\ref{8}) by $\phi_1$ and integrating on $\Omega$, we obtain
  \begin{equation*}
  \begin{split}
        \lambda_1d\int_{\Omega}u(x)\phi_1(x)dx=&-d\int_{\Omega}\Delta u(x)\phi_1(x)dx=\int_{\Omega}\phi_1(x)F(u(x),v(x))dx\\
        \leq& \int_{\Omega}\phi_1(x)F_1(u(x))u(x)dx \leq K_0\int_{\Omega}u(x)\phi_1(x)dx.
          \end{split}
  \end{equation*}
  That is, $(d\lambda_1-K_0)\ds\int_{\Omega}u(x)\phi_1(x)dx\leq0$. Thus, the system (\ref{8}) have no positive solution if $d>K_0/\la_1$.


2. According to Krasnoselskii-Rabinowitz global bifurcation theorem (see \cite{Rabinowitz1971,shi2009global}), a connected component $\Sigma_1$ of $\Sigma$  that contains $\Gamma_1$ (defined in Theorem \ref{thm:3.1}) satisfies one of the following: (i) $\Sigma_1$ is unbounded; or (ii) $\overline{\Sigma_1}$ contains $(\tilde{d},0,0)$, where $(\tilde{d},0,0)$ is another bifurcation point from $\Gamma_0$ such that $\tilde{d}>0$ (the line of trivial solutions); or (iii) $\overline{\Sigma_1}$ contains $(\hat{d},\hat{u},\hat{v})$ which is on the boundary $\partial S$ of $S=\{(d,u,v)\in \R\times X^2: d>0, u>0, v>0\}$.

From Theorem \ref{thm:3.1}, we know the case (ii) cannot occur as $d=d^*$ is the only bifurcation point for positive solutions of \eqref{8}. From Lemma \ref{lem5}, any positive solution $(u,v)$ of \eqref{8} satisfies $||u||_{\infty}+||v||_{\infty}\leq u^*+H^*$ which is independent of $d$; and from part 1 of Theorem \ref{thm3}, any solution $(d,u,v)$ of \eqref{8} must satisfy $0\le d\le d^{**}$. Hence the alternative (i) cannot occur either. Therefore (iii) occurs, and $\overline{\Sigma_1}$ contains $(\hat{d},\hat{u},\hat{v})$ which is on  $\partial S$. From the strong maximum principle, if $\hat{u}(x)=0$ for some $x\in \Om$, then $\hat{u}(x)\equiv 0$ for $x\in \Omega$. If $\hat{u}(x)\equiv 0$, it is easy to see $\hat{v}\equiv 0$. If we also have $\hat{d}>0$ then this returns to the case (ii). Thus we must have $\hat{d}=0$.  This shows that $P_d\Sigma_1\supset (0,d^*)$. Let $d_0=\sup \{d>0: (d,u,v)\in \Sigma_1\}$. Then $d_0\geq d^*$, and from part 1 of Theorem \ref{thm3}, we also have $d_0<d^{**}$. This completes the proof.
\end{proof}

\subsection{Case 2: $a=F_u(0,0)<0$.}

In this subsection we further assume  the following condition holds:
\begin{enumerate}
\item[\bf(A5)] There exist positive constants $K_1,~K_2,~K_3$ and a continuous function $F_2:\bar{\R}_+\rightarrow \R$
   such that $F(u,v)\leq -K_1u+F_2(v)$ for $(u,v)\in \bar{\R}_+\times\bar{\R}_+$,  $F_2(v)\leq K_2 v$ for $v\in\bar{\R}_+$, and $H(u)\leq K_3 u$ for $u\in\bar{\R}_+$.
   \item[\bf(A6a)]  There exists a positive constant $K_4$ such that  $F_2(v)\leq K_4$ for $v\in\bar{\R}_+$; or
   \item[\bf(A6b)] There exists a positive constants $K_5$ such that  $H(u)\leq K_5$ for $u\in\bar{\R}_+$.
\end{enumerate}
We remark that {\bf (A5)} implies that
\begin{equation}\label{H6}
  a=F_u(0,0)\leq -K_1, \;\; b=F_v(0,0)\leq K_2, \;\; k=H'(0)\leq K_3.
\end{equation}

Similar to Lemma \ref{lem5} we have the following \textit{a priori} estimates under {\bf (A5a)} or {\bf (A5b)}.
\begin{lemma}\label{lem4}
   Suppose the conditions {\bf(A1')}, {\bf (A2)}, {\bf(A3)}, {\bf (A5)} hold  and $a<0$, $(u,v)$ is a nonnegative solution of (\ref{8}).
   \begin{enumerate}
     \item When {\bf (A6a)} is also satisfied, then
  \begin{equation}
    0\leq u(x)\leq\frac{K_4}{K_1},\;\; 0\leq v(x)\leq \max_{0\leq u\leq K_4/K_1} H(u):=H^{**}.
  \end{equation}
     \item When {\bf (A6b)} is also satisfied, then
  \begin{equation}
    0\leq u(x)\leq\frac{1}{K_1}\max_{0\leq v\leq K_5} F_2(v):=H^{***},\;\; 0\leq v(x)\leq K_5.
  \end{equation}
   \end{enumerate}
  \end{lemma}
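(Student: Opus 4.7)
The plan is to mimic the maximum-principle argument used in Lemma \ref{lem5}, but now split according to which of \textbf{(A6a)} or \textbf{(A6b)} provides the initial sup bound. In both cases the idea is: evaluate an equation at a maximizer, use $-\Delta$ nonnegativity there, combine with the one-sided linear growth assumption in \textbf{(A5)}, and then solve for the maximum.

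For part 1, assuming \textbf{(A6a)}, I would first handle $u$. If $u \not\equiv 0$, pick $x_0 \in \Omega$ with $u(x_0) = \max_{\bar\Omega} u$ (the max is attained in $\Omega$ by the Dirichlet boundary condition). Then $-d\Delta u(x_0) \geq 0$, and the first equation of \eqref{8} combined with \textbf{(A5)} and \textbf{(A6a)} gives
\[
0 \leq F(u(x_0),v(x_0)) \leq -K_1 u(x_0) + F_2(v(x_0)) \leq -K_1 u(x_0) + K_4,
\]
so $u(x_0) \leq K_4/K_1$. Next let $x_1 \in \Omega$ maximize $v$; the second equation at $x_1$ yields $0 \leq H(u(x_1)) - v(x_1)$, so $v(x_1) \leq H(u(x_1)) \leq \max_{0 \leq u \leq K_4/K_1} H(u) = H^{**}$, using the uniform bound on $u$ just obtained. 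This gives the stated estimates.

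For part 2, assuming \textbf{(A6b)}, the order is reversed: I would bound $v$ first by the same maximizer argument on the second equation, which yields $v(x_1) \leq H(u(x_1)) \leq K_5$, hence $\|v\|_\infty \leq K_5$. Then at an interior maximizer $x_0$ of $u$, the first equation and \textbf{(A5)} give
\[
0 \leq F(u(x_0),v(x_0)) \leq -K_1 u(x_0) + F_2(v(x_0)) \leq -K_1 u(x_0) + \max_{0 \leq v \leq K_5} F_2(v),
\]
which rearranges to $u(x_0) \leq H^{***}$.

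There is no real obstacle here: nonnegativity of $u,v$ comes from the assumption that we start with a nonnegative solution (with the trivial case $u \equiv 0 \Rightarrow v \equiv 0$ handled separately, exactly as in Lemma \ref{lem5}), the Dirichlet condition forces any positive maximum to be attained in the interior, and \textbf{(A5)} together with \textbf{(A6a)} or \textbf{(A6b)} provides the absorption term that closes the estimate. The only mild care needed is to verify that the maximum of $H$ over $[0,K_4/K_1]$ (respectively of $F_2$ over $[0,K_5]$) is finite, which follows from continuity of $H$ and $F_2$ assumed in \textbf{(A1')} and \textbf{(A5)}.
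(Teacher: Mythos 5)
Your proposal is correct and follows essentially the same argument as the paper's proof: evaluate each equation at an interior maximizer, use $-d\Delta \geq 0$ there together with \textbf{(A5)}, and order the two estimates according to whether \textbf{(A6a)} bounds $F_2$ first (so $u$ then $v$) or \textbf{(A6b)} bounds $H$ first (so $v$ then $u$), with the trivial case $u\equiv 0$ handled separately. No gaps.
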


\begin{proof}
If $u(x)\equiv 0$, then $v(x)\equiv 0$ and the result is obviously true. Thus we assume that $u(x)>0$ for $x\in \Omega$ from the maximum principle. First we assume that {\bf (A6a)} is  satisfied. Let $x_0\in\Omega$ such that $u(x_0)=\max\limits_{x\in\bar{\Omega}}u(x)>0$. Then  from the maximum principle,  the first equation of (\ref{8}) and {\bf (A5)}, we have that
\begin{equation}\label{H1}
 0\leq-d\Delta u(x_0)=F(u(x_0),v(x_0))\leq-K_1u(x_0)+F_2(v(x_0)),
\end{equation}
which together with {\bf (A6a)} implies that $u(x_0)\leq \ds \frac{F_2(v(x_0))}{K_1}\leq \frac{K_4}{K_1}$ and hence $0<u(x)\leq \ds\frac{K_4}{K_1}$ for  $x\in \Omega$ from the strong maximum principle.

Since $u(x)>0$,  $v(x)=(-d\Delta+\tau^{-1})^{-1}(\tau^{-1}H(u))>0$ for $x\in \Omega$. Let $x_1\in\Omega$ such that $v(x_1)=\max\limits_{x\in\bar{\Omega}}v(x)>0$. Then  from the maximum principle and  the second equation of (\ref{8}), we have that
\begin{equation}\label{H2}
0\leq-d\Delta v(x_1)=\frac{1}{\tau}[H(u(x_1))-v(x_1)],
\end{equation}
which implies that for any $x\in \Omega$, $\ds v(x)\leq v(x_1)\leq H(u(x_1))\leq \max_{0\leq u\leq K_4/K_1} H(u):=H^{**}$ as $0\leq u(x_1)\leq \ds\frac{K_4}{K_1}$.

Next we  assume that {\bf (A6b)} is satisfied. Let $x_0$ and $x_1$ be the same definition as above. Then from \eqref{H2} and {\bf (A6b)}, we have $\ds v(x)\leq v(x_1)\leq H(u(x_1))\leq K_5$ for any $x\in \Omega$, and from \eqref{H1}, we have  $u(x)\leq u(x_0)\leq \ds \frac{F_2(v(x_0))}{K_1}\leq \frac{1}{K_1}\max_{0\leq v\leq K_5} F_2(v):=H^{***}$.
%
%
%
%
%
\end{proof}

Now we have  the following results on the global bifurcation of positive solutions of \eqref{8}.
\begin{theorem}\label{thm5}
Suppose that the conditions {\bf(A1')}, {\bf (A2)}, {\bf(A3)}, {\bf(A5)}, {\bf (A6a)} or {\bf (A6b)} hold and $a<0$. Then the following results are true:
  \begin{enumerate}
 \item (\ref{8}) has no positive solution when $d>d^{***}$ which is defined as
  \begin{equation}\label{H7}
     d^{***}(\tau)=\frac{1}{2\lambda_1\tau}(-K_1\tau-1+\sqrt{(-K_1\tau+1)^2+4\tau K_2 K_3}),
  \end{equation}
  and $K_1,~K_2,~K_3$ are defined in {\bf (A5)};

 \item there exists a connected component $\Sigma_1$ of $\Sigma$ such that $\Gamma_1\subseteq\Sigma_1$, the projection $P_d\Sigma_1$ of $\Sigma_1$ into the $d-$component satisfies $P_d\Sigma_1=(0,d_0)$ for some $d_0\in[d^*,d^{***})$, and for every $(d,u,v)\in\Sigma_1$, $||u||_{\infty}+||v||_{\infty}\leq C$ for some $C>0$ independent of $d$.
\end{enumerate}
 \end{theorem}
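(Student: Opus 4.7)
The proof will closely parallel that of Theorem \ref{thm3}, with the one-sided bound \textbf{(A4)} replaced by the pair of linear estimates in \textbf{(A5)} and with the $L^\infty$ bounds now supplied by Lemma \ref{lem4} under either \textbf{(A6a)} or \textbf{(A6b)}. I would address the nonexistence claim of part 1 first and then feed it, together with the \textit{a priori} bounds of Lemma \ref{lem4}, into the global bifurcation argument for part 2.

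For part 1, my plan is to test both equations of \eqref{8} against the principal eigenfunction $\phi_1$. Writing $U=\int_\Omega u\phi_1\,dx$ and $V=\int_\Omega v\phi_1\,dx$, integration by parts together with $F(u,v)\leq -K_1 u+F_2(v)\leq -K_1 u+K_2 v$ from \textbf{(A5)} yields $(d\lambda_1+K_1)U\leq K_2 V$. Testing the second equation in the same manner and using $H(u)\leq K_3 u$ produces $(d\lambda_1\tau+1)V\leq K_3 U$. Since any positive solution has $U,V>0$ by the strong maximum principle, multiplying the two inequalities and cancelling the positive factor $UV$ gives the scalar condition $(d\lambda_1+K_1)(d\lambda_1\tau+1)\leq K_2 K_3$. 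This is a quadratic inequality in $d$ whose larger root is precisely $d^{***}(\tau)$ in \eqref{H7}, so no positive solution can exist for $d>d^{***}(\tau)$.

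For part 2, I would apply the Krasnoselskii--Rabinowitz global bifurcation theorem exactly as in the proof of Theorem \ref{thm3}. The connected component $\Sigma_1$ of $\Sigma$ through the local branch $\Gamma_1$ from Theorem \ref{thm:3.1} must satisfy one of three alternatives: (i) $\Sigma_1$ is unbounded in $\mathbb{R}\times X^2$; (ii) $\overline{\Sigma_1}$ contains a second bifurcation point $(\tilde d,0,0)$ with $\tilde d>0$; or (iii) $\overline{\Sigma_1}$ meets the boundary $\partial S$ of the positive cone. Alternative (ii) is excluded because Theorem \ref{thm:3.1} shows $d=d^*$ is the \emph{only} bifurcation point for positive solutions. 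Lemma \ref{lem4} supplies a uniform bound $\|u\|_\infty+\|v\|_\infty\leq C$ on $\Sigma_1$ that is independent of $d$, while part 1 constrains $d\leq d^{***}$; together these rule out (i). Thus (iii) occurs, and the strong maximum principle then forces the limiting boundary point to be of the form $(\hat d,0,0)$ with $\hat d=0$, giving $P_d\Sigma_1\supset(0,d^*)$, and setting $d_0=\sup\{d:(d,u,v)\in\Sigma_1\}$ finishes the proof with $d_0\in[d^*,d^{***})$.

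The main obstacle is the algebraic verification in part 1 that the threshold extracted from the coupled integral inequalities really equals the announced $d^{***}(\tau)$. Expanding $(d\lambda_1+K_1)(d\lambda_1\tau+1)=K_2K_3$ gives $\lambda_1^2\tau d^2+\lambda_1(1+K_1\tau)d+(K_1-K_2K_3)=0$, whose discriminant simplifies to $\lambda_1^2\bigl[(1-K_1\tau)^2+4\tau K_2K_3\bigr]$ and whose positive root matches \eqref{H7} precisely. Structurally this is the formula for $d^*(\tau)$ in \eqref{40} under the substitutions $a\mapsto -K_1$, $b\mapsto K_2$, $k\mapsto K_3$, which is exactly the inequality \eqref{H6} guaranteed by \textbf{(A5)}; so once this correspondence is noticed, the remaining arguments for both parts are close mirrors of the proofs of Lemma \ref{lem5} and Theorem \ref{thm3}.
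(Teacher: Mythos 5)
Your proposal is correct and follows essentially the same route as the paper: part 1 is the identical test against $\phi_1$ yielding the pair of inequalities $(d\lambda_1+K_1)U\leq K_2V$ and $(d\lambda_1\tau+1)V\leq K_3U$, whose product gives the paper's inequality \eqref{H8} with $d^{***}$ as the larger root (you even carry out the quadratic computation that the paper leaves implicit, and correctly note that $K_2K_3-K_1\geq a+bk>0$ via \eqref{H6} and \textbf{(A3)} is what makes the threshold positive), and part 2 is the same Krasnoselskii--Rabinowitz trichotomy, with (ii) excluded by uniqueness of the bifurcation point $d^*$, (i) excluded by Lemma \ref{lem4} plus the $d$-bound from part 1, and (iii) forcing $\hat d=0$. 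No gaps.
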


\begin{proof}
1.  Assume that $(d,u,v)$ is a positive solution of (\ref{8}). Multiplying the second equation of (\ref{8}) by $\phi_1(x)$ and integrating on $\Omega$, using {\bf (A5)} we have
  \begin{equation*}
  \begin{split}
   &\lambda_1d\int_{\Omega}v(x)\phi_1(x)dx=-d\int_{\Omega}\Delta v(x)\phi_1(x)dx
   =\frac{1}{\tau}\int_{\Omega}H(u(x))\phi_1(x)dx-\frac{1}{\tau}\int_{\Omega}v(x)\phi_1(x)dx\\
   \leq&\frac{K_3}{\tau}\int_{\Omega}u(x)\phi_1(x)dx-\frac{1}{\tau}\int_{\Omega}v(x)\phi_1(x)dx,
   \end{split}
  \end{equation*}
  which implies
  \begin{equation}\label{H3}
    \left(\lambda_1d+\frac{1}{\tau}\right)\int_{\Omega}v(x)\phi_1(x)dx\leq\frac{K_3}{\tau}\int_{\Omega}u(x)\phi_1(x)dx.
  \end{equation}
Similarly  multiplying the first equation of (\ref{8}) by $\phi_1(x)$ and integrating on $\Omega$, using {\bf (A5)} we have
  \begin{equation*}
  \begin{split}
       & \lambda_1d\int_{\Omega}u(x)\phi_1(x)dx=-d\int_{\Omega}\Delta u(x)\phi_1(x)dx=\int_{\Omega}F(u(x),v(x))\phi_1(x)dx\\
        \leq& -K_1\int_{\Omega}u(x)\phi_1(x)dx +K_2\int_{\Omega}v(x)\phi_1(x)dx,
          \end{split}
  \end{equation*}
   which implies
  \begin{equation}\label{H4}
    \left(\lambda_1d+K_1\right)\int_{\Omega}u(x)\phi_1(x)dx\leq K_2\int_{\Omega}v(x)\phi_1(x)dx.
  \end{equation}
  Combining \eqref{H3} and \eqref{H4}, we obtain that
  \begin{equation}\label{H8}
    (\lambda_1d+K_1)\left(\lambda_1d+\frac{1}{\tau}\right)\leq \frac{K_2 K_3}{\tau}.
  \end{equation}
  It is easy to calculate that \eqref{H8} holds when $0<d\leq d^{***}$, since $K_2 K_3-K_1\geq bk+a>0$ from {\bf (A3)} and \eqref{H6}.
   Therefore, system (\ref{8}) have no positive solution if $d>d^{***}$.

2. According to Krasnoselskii-Rabinowitz global bifurcation theorem (see \cite{Rabinowitz1971,shi2009global}), a connected component $\Sigma_1$ of $\Sigma$   that contains $\Gamma_1$ (defined in Theorem \ref{thm:3.1}) satisfies one of the following: (i) $\Sigma_1$ is unbounded; or (ii) $\Sigma_1$ contains $(\tilde{d},0,0)$, where $(\tilde{d},0,0)$ is another bifurcation point from $\Sigma_0$; or (iii) $\overline {\Sigma_1}$ contains $(\hat{d},\hat{u},\hat{v})$, which is on the boundary $\partial S$ of $S=\{(d,u,v)\in \R\times X^2: d>0, u>0, v>0\}$.

From Theorem \ref{thm:3.1}, we know the case (ii) cannot occur as $d=d^*$ is the only bifurcation point for positive solutions of \eqref{8}. From Lemma \ref{lem4}, any positive solution $(u,v)$ of \eqref{8} satisfies $||u||_{\infty}+||v||_{\infty}\leq \ds\frac{K_4}{K_1}+H^{**}$ or $H^{***}+K_5$, which is independent of $d$; and from part 1 of Theorem \ref{thm5}, any solution $(d,u,v)$ of \eqref{8} must satisfy $0\leq d\leq d^{***}$. Hence the alternative (i) cannot occur either. Therefore (iii) occurs, and $\overline{\Sigma_1}$ contains $(\hat{d},\hat{u},\hat{v})$ which is on  $\partial S$. Similar to the proof of Theorem \ref{thm3} we must have $\hat{d}=0$ thus $P_d\Sigma_1\supset (0,d^*)$. Let $d_0=\sup \{d>0: (d,u,v)\in \Sigma_1\}$. Then $d_0\geq d_*$, and from part 1 of Theorem \ref{thm5}, we also have $d_0<d^{***}$. This completes the proof.
\end{proof}

\section{Uniqueness of the steady state}

In Sections 3 and 5, the existence of a positive steady state solution of \eqref{1} for all small diffusion coefficient case $d\in (0,d_0)$ has been proved under proper conditions on the nonlinear functions $F$ and $H$. In general the positive steady state solution is not necessarily unique for all $d\in (0,d_0)$, except near the bifurcation point $d=d^*$. Here we show that when the spatial domain is one-dimensional and the nonlinearity is in a more special form, the positive steady state solution of \eqref{1} is unique for all $d\in (0,d_0)$ due to its ``consumer-resource" type structure. 

This section focuses on the one-dimensional steady state problem with the nonlinearity being in a form of $F(u,v)=uf(u,v)$:
\begin{equation}\label{52}
  \begin{cases}
    -du''(x)=u(x) f(u(x),v(x)),&x\in(0,L),\\
   \ds -dv''(x)=\frac{1}{\tau}(H(u(x))-v(x)),&x\in(0,L),\\
    u(0)=u(L)=v(0)=v(L)=0,
  \end{cases}
\end{equation}
where $L>0$ and $':=\ds\frac{d}{dx}$. The linearized equation  at a positive solution $(u_d(x),v_d(x))$ of  (\ref{52})  can be written as
\begin{equation}
  \label{53}
  \begin{cases}
    -d\phi''-[u_df_u(u_d,v_d)+f(u_d,v_d)]\phi=u_df_v(u_d,v_d)\psi,&x\in(0,L),\\
    \ds -d\psi''+\frac{1}{\tau}\psi=\frac{1}{\tau}H'(u_d)\phi,&x\in(0,L),\\
    \phi(0)=\phi(L)=\psi(0)=\psi(L)=0.
  \end{cases}
  \end{equation}
The coexistence state $(u_d(x),v_d(x))$ of  (\ref{52}) is non-degenerate if the only solution of \eqref{53}  is $(\phi,\psi)=(0,0)$. The key of establishing the uniqueness of positive solution of \eqref{52} is the following non-degeneracy property of positive solution.
\begin{proposition}\label{pro:non}
Suppose that the conditions {\bf(A1')}, {\bf (A2)} hold for $F(u,v)=uf(u,v)$ and $f(u,v)$, $H(u)$ also satisfy
  \begin{enumerate}
\item[\bf(A7)] $f_u(u,v)< 0$ and $f_v(u,v)<0$ for $(u,v)\in \R^+\times\R^+$, and $H'(u)>0$ for $u\in \R^+$.
\end{enumerate}
  If $(u_d(x),v_d(x))$ is a positive solution of (\ref{52}),  then $(u_d(x),v_d(x))$ is non-degenerate.
\end{proposition}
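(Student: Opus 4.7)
The plan is to eliminate $\psi$ from (5.3) and analyze the resulting scalar equation. The operator $L_2:=-d\partial_{xx}+\tau^{-1}$ with Dirichlet boundary conditions has principal eigenvalue $d\lambda_1+\tau^{-1}>0$, so it is invertible with a positivity-preserving inverse. Solving the second equation of (5.3) for $\psi=L_2^{-1}(\tau^{-1}H'(u_d)\phi)$ and substituting into the first yields the scalar problem $(L_1+M)\phi=0$, where
\[
L_1\phi := -d\phi''-[u_df_u+f](u_d,v_d)\,\phi, \qquad M\phi := u_d(-f_v)(u_d,v_d)\,L_2^{-1}\!\bigl(\tau^{-1}H'(u_d)\phi\bigr).
\]
By \textbf{(A7)}, $M$ is a compact, strongly positivity-improving operator on $L^p(0,L)$, so the proposition reduces to showing that $0$ is not an eigenvalue of $L_1+M$.

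The key identity is $L_1u_d=-u_d^2f_u(u_d,v_d)$, which is immediate from $-du_d''=u_df(u_d,v_d)$. By \textbf{(A7)}, $-u_df_u>0$ in $(0,L)$, so $u_d$ is a positive strict supersolution of $L_1$. Testing the eigenvalue equation $L_1\phi_1=\mu_1(L_1)\phi_1$ (with principal eigenfunction $\phi_1>0$) against $u_d$ and using the self-adjointness of $L_1$ with Dirichlet data gives $\mu_1(L_1)\int_0^L u_d\phi_1\,dx=\int_0^L u_d^2(-f_u)\phi_1\,dx>0$, whence $\mu_1(L_1)>0$. For $c>0$ sufficiently large, $(L_1+M+c)^{-1}$ is a compact, positivity-preserving operator on $L^p(0,L)$, and the Krein--Rutman theorem furnishes a real principal eigenvalue $\lambda^*$ of $L_1+M$ with a strictly positive eigenfunction $\phi^*\in W^{2,p}(0,L)\cap W_0^{1,p}(0,L)$, with the additional property that every real eigenvalue of $L_1+M$ is at least $\lambda^*$.

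It remains to verify $\lambda^*>0$. Testing $(L_1+M)\phi^*=\lambda^*\phi^*$ against $u_d$ and using $L_1u_d=-u_d^2f_u$ together with the self-adjointness of $L_1$ produces
\[
\int_0^L u_d^2(-f_u)(u_d,v_d)\,\phi^*\,dx + \int_0^L u_d\,M\phi^*\,dx \;=\; \lambda^*\int_0^L u_d\,\phi^*\,dx.
\]
Both integrals on the left are strictly positive by \textbf{(A7)} and $M\phi^*>0$ in $(0,L)$, while $\int_0^L u_d\,\phi^*\,dx>0$, forcing $\lambda^*>0$. Hence $0$ is not an eigenvalue of $L_1+M$, giving $\phi\equiv 0$ and then $\psi=L_2^{-1}(0)\equiv 0$. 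The main obstacle in this plan is the strict positivity $\lambda^*>0$: the identity $L_1u_d=-u_d^2f_u>0$ converts the sublinearity $f_u<0$ from \textbf{(A7)} into a strictly positive contribution on the left-hand side, while $f_v<0$ and $H'>0$ ensure that the nonlocal perturbation $M$ contributes positively rather than destructively.
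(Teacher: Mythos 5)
Your reduction to $(L_1+M)\phi=0$ and your verification that $\rho_1(L_1)>0$ agree with the paper's opening moves (the paper obtains $\rho_1(L_1)>0$ and $\rho_1(L_2)>0$ from \eqref{54} and monotonicity of the principal eigenvalue). The genuine gap is the Krein--Rutman step: the claim that $(L_1+M+c)^{-1}$ is positivity-preserving for large $c$ is false, because $M$ is a \emph{nonlocal} positive operator entering with a plus sign. The system \eqref{56} is not cooperative --- the coupling $u_df_v(u_d,v_d)\psi$ is negative --- so after eliminating $\psi$ the nonlocal term carries the anti-maximum-principle sign: you get $L_1+M$, whereas a positive resolvent (and hence a principal eigenvalue with positive eigenfunction) is what one gets for operators of the form $L_1-M$ with $M\ge 0$. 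Concretely, $(L_1+M+c)^{-1}=(L_1+c)^{-1}\bigl(I+M(L_1+c)^{-1}\bigr)^{-1}$ and the Neumann series of the second factor alternates in sign; taking $g\geq 0$ concentrated near a point $x_0$, the term $(L_1+c)^{-1}g$ is exponentially small (like $e^{-\sqrt{c/d}\,|x-x_0|}$) away from $x_0$, while the correction $-(L_1+c)^{-1}M(L_1+c)^{-1}g$ is negative of algebraic size $c^{-2}$ throughout $(0,L)$, since $L_2^{-1}$ spreads mass globally with a $c$-independent kernel. So for large $c$ the resolvent maps nonnegative data to sign-changing outputs, your principal pair $(\lambda^*,\phi^*)$ need not exist, and the final test-against-$u_d$ computation, though correct as algebra, rests on an object that is not there. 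A sanity check that this step must fail: if $L_1+M$ always had a least real eigenvalue $\lambda^*>0$ with positive eigenfunction, essentially the same argument would give linear stability of every positive steady state, whereas the paper explicitly remarks after Proposition \ref{pro1} that stability is open and that Proposition \ref{pro:non} only excludes a zero eigenvalue, not Hopf bifurcation.

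What the paper actually does at this point is weaker and different: from \eqref{57}, $\phi=L_1^{-1}\bigl(u_df_v(u_d,v_d)\,L_2^{-1}(\tau^{-1}H'(u_d)\phi)\bigr)$, and since $u_df_v<0$ the right-hand side is $-T\phi$ with $T$ compact and strongly order-preserving; if $\phi$ were of one sign, say $\phi\geq 0$, then $-\phi=T\phi>0$ gives an immediate contradiction, so $\phi$ (and then $\psi$) must change sign in $(0,L)$. The contradiction is then completed not by spectral positivity but by the one-dimensional sign-change/oscillation lemmas of \cite[Lemma 3.1]{LG1993} and \cite[Lemma 5.2]{CasalEilbeck1994} --- and this is exactly where the restriction $\Omega=(0,L)$ is used. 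Note that your proposal never invokes one-dimensionality, which is itself a warning sign, since the proposition is stated and proved only on an interval. To repair your argument you would need a Sturm-type argument of this kind; the Krein--Rutman route through $L_1+M$ cannot be fixed by a different choice of $c$.
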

\begin{proof}
  Since $(u_d(x),v_d(x))$ solves (\ref{52}), the following equalities hold:
  \begin{equation*}
    \begin{cases}
      \ds \left(-d\frac{d^2}{dx^2}-f(u_d,v_d)\right)u_d=0,\\
      \ds \left(-d\frac{d^2}{dx^2}+\frac{1}{\tau}-\frac{1}{\tau}\frac{H(u_d)}{v_d}\right)v_d=0,\\
      u_d(0)=u_d(L)=v_d(0)=v_d(L)=0.
    \end{cases}
  \end{equation*}
Since $(u_d,v_d)$ is positive, it follows from the Krein-Rutman Theorem,
\begin{equation}\label{54}
\rho_1\left(-d\frac{d^2}{dx^2}-f(u_d,v_d)\right)=\rho_1\left(-d\frac{d^2}{dx^2}+\frac{1}{\tau}-\frac{1}{\tau}\frac{H(u_d)}{v_d}\right)=0,
\end{equation}
where $\rho_1(L)$ is the principal eigenvalue corresponding to the operator $L$. Clearly, the linearized equation (\ref{53}) can be rewritten as
  \begin{equation}\label{56}
    \begin{cases}
      \ds L_1\phi\triangleq\left(-d\frac{d^2}{dx^2}-u_df_u(u_d,v_d)-f(u_d,v_d)\right)\phi=u_df_v(u_d,v_d)\psi,\\
     \ds L_2\psi\triangleq\left(-d\frac{d^2}{dx^2}+\frac{1}{\tau}\right)\psi=\frac{1}{\tau}H'(u_d)\phi,\\
      \phi(0)=\phi(L)=\psi(0)=\psi(L)=0.
    \end{cases}
  \end{equation}
By the monotonicity of principal eigenvalue $\rho_1(\cdot)$, (\ref{54}) and {\bf (A7)}, we have
  \begin{equation}\label{55}
    \begin{split}
      & \rho_1(L_1)=\rho_1\left(-d\frac{d^2}{dx^2}-u_df_u(u_d,v_d)-f(u_d,v_d)\right)>\rho_1\left(-d\frac{d^2}{dx^2}-f(u_d,v_d)\right)=0,\\
      &\rho_1(L_2)=\rho_1\left(-d\frac{d^2}{dx^2}+\frac{1}{\tau}\right) >\rho_1\left(-d\frac{d^2}{dx^2}+\frac{1}{\tau}-\frac{1}{\tau}\frac{H(u_d)}{v_d}\right)=0.
    \end{split}
  \end{equation}
From (\ref{55}), we know that all eigenvalues of the operators $L_1$ and $L_2$ are positive, and they have the inverse operators $L_1^{-1}$ and $L_2^{-1}$ respectively, which are compact, strictly order-preserving with respect to the usual cone of positive functions.

We prove that the only solution of \eqref{56} is $(\phi,\psi)=(0,0)$ by contradiction.
Suppose (\ref{56}) has a nontrivial solution  $(\phi,\psi)\neq(0,0)$. From (\ref{56}), we have
\begin{equation}
  \label{57}
  \phi=L_1^{-1}\left(u_d f_v(u_d,v_d)L_2^{-1}\left(\frac{1}{\tau}H'(u_d)\phi\right)\right).
\end{equation}
Since $f_v(u,v)<0$ and $H'(u)>0$, and the right hand side of (\ref{57}) determines a compact, strongly order-preserving operator. Thus, $\phi$ must change sign in $(0,L)$, and consequently $\psi$ must change sign in $(0,L)$. Now we can follow the argument in \cite[Lemma 3.1]{LG1993} or \cite[Lemma 5.2]{CasalEilbeck1994} to show that $\phi(x)= \psi(x)\equiv 0$ for $x\in (0,L)$.
\end{proof}

Now we can prove the uniqueness of positive steady state and exact global bifurcation when $a=F_u(0,0)>0$ and $\Omega=(0,L)$.

\begin{theorem}\label{thm4}
Suppose that the conditions {\bf (A1')}, {\bf (A2)}-{\bf(A4)}, {\bf (A7)} hold and $a>0$. Then \eqref{52} has a unique positive solution $(u_d(x),v_d(x))$ which is non-degenerate when $0<d<d^*$, and it has no positive solution when $d\geq d^*$. Moreover all positive solutions of \eqref{52} are on a smooth curve $\Sigma_1=\{(d,u_d(x),v_d(x)):0<d<d^*\}$.
\end{theorem}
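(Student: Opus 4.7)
The strategy is to combine the global bifurcation structure of Theorem \ref{thm3} with the universal non-degeneracy of positive solutions provided by Proposition \ref{pro:non}. Non-degeneracy together with the implicit function theorem will force the positive solution set $\Sigma$ to be a smooth $1$-manifold on which $d$ has no critical points, and then the a priori bounds from Lemma \ref{lem5} and the Crandall--Rabinowitz uniqueness of the local branch at $(d^*,0,0)$ will identify $\Sigma$ with $\Sigma_1$.

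First I would record a few preliminary facts. Because $F(u,v)=uf(u,v)$, one has $a=f(0,0)$ and $b=F_v(0,0)=0$, so by Proposition \ref{lem10}(1) the bifurcation point is $d^*=a/\lambda_1$, independent of $\tau$. Moreover, with $b=0$, $r=F_{vv}(0,0)=0$, $p=2f_u(0,0)<0$, $q=f_v(0,0)<0$ under (A7), the formula \eqref{first} yields $d'(0)<0$, so $\Gamma_1$ emanates from $(d^*,0,0)$ into the region $d<d^*$. Next I would invoke Proposition \ref{pro:non}: at every positive solution $(d,u,v)\in\Sigma$ the linearization $W_{(u,v)}(d,u,v)$ is a Fredholm operator of index zero with trivial kernel, hence an isomorphism $X^2\to Y^2$. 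The implicit function theorem therefore makes $\Sigma$ a smooth $1$-dimensional submanifold of $\mathbb{R}\times X^2$, and the projection to the $d$-component is a local diffeomorphism. A standard turning-point argument shows $d$ is strictly monotonic along each connected component: if $d'(s_0)=0$ in an arc-length parametrization $(d(s),u(s),v(s))$, then the nonzero tangent $(u'(s_0),v'(s_0))$ would lie in $\ker W_{(u,v)}(d(s_0),u(s_0),v(s_0))$, contradicting non-degeneracy.

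Applying this to the global component $\Sigma_1$ from Theorem \ref{thm3}, I conclude that $\Sigma_1$ is a smooth curve on which $d$ is strictly monotonic, meeting $(d^*,0,0)$ in the direction $d<d^*$; combined with $P_d\Sigma_1\supset(0,d^*)$ and Lemma \ref{lem5}, this forces $d$ to decrease strictly from $d^*$ to $0$ along $\Sigma_1$ and yields the smooth parametrization $\Sigma_1=\{(d,u_d,v_d):0<d<d^*\}$ with uniqueness on this interval. To eliminate any other positive solutions, suppose $C$ is a connected component of $\Sigma$ disjoint from $\Sigma_1$. By Lemma \ref{lem5} and elliptic regularity, $C$ is precompact in $\mathbb{R}\times X^2$; strict monotonicity of $d$ rules out closed loops, so $C$ is a smooth open arc with two limit points in its closure. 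At each limit point, either $d\to 0^+$ or $(u,v)$ touches the boundary of the positive cone, in which case the strong maximum principle applied to both equations of \eqref{52} (using $H(0)=0$ and $H(u)>0$ for $u>0$ from (A2), (A7)) forces $(u,v)\equiv(0,0)$, giving a trivial endpoint $(d^\sharp,0,0)$. Since $d=d^*$ is the only bifurcation point from the trivial branch by Theorem \ref{thm:3.1} and Crandall--Rabinowitz pins down $\Gamma_1\subset\Sigma_1$ as the unique local positive branch there, no endpoint of $C$ can be $(d^*,0,0)$. Hence both endpoints of $C$ must have $d\to 0^+$, which contradicts the strict monotonicity of $d$ along $C$. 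Therefore $\Sigma=\Sigma_1$, and in particular \eqref{52} has no positive solution for $d\geq d^*$.

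The step I expect to be the main obstacle is the endpoint analysis in the last paragraph: converting the $L^\infty$ bounds of Lemma \ref{lem5} into genuine $X^2$-compactness via elliptic regularity, then tracking the direction in which $C$ approaches its boundary and ruling out every possibility except the trivial limit and the $d\to 0^+$ limit. The remaining pieces — the preliminary computation $d'(0)<0$, the appeal to Proposition \ref{pro:non}, and the turning-point argument — are routine in the non-degenerate framework.
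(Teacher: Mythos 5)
Your proposal is correct, and its skeleton is exactly the paper's: the global component $\Sigma_1$ from Theorem \ref{thm3}, non-degeneracy from Proposition \ref{pro:non} upgrading $\Sigma_1$ to a global graph over $d$, the computation $d'(0)<0$ from \eqref{first} with $b=r=0$, $p=2f_u(0,0)<0$, $q=f_v(0,0)<0$, and Crandall--Rabinowitz local uniqueness at $(d^*,0,0)$ to exclude a second component. The one genuine divergence is the nonexistence claim for $d\ge d^*$. The paper proves it first and directly: multiplying the $u$-equation by $u$ and integrating, (A7) gives $f(u,v)<f(0,0)$ for positive $(u,v)$, so $d\int_0^L (u'(x))^2\,dx < f(0,0)\int_0^L u^2(x)\,dx \le \lambda_1^{-1}f(0,0)\int_0^L (u'(x))^2\,dx$, hence $d< f(0,0)/\lambda_1=d^*$ for every positive solution, with no reference to any bifurcation structure. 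You instead deduce nonexistence only at the end, as a corollary of the classification $\Sigma=\Sigma_1$ together with strict monotonicity of $d$ along $\Sigma_1$; this is logically sound, but it places the entire weight of the theorem on your endpoint analysis (precompactness for $d$ bounded away from $0$, the trivial-limit alternative via the strong maximum principle, exclusion of $(d^*,0,0)$ as an endpoint of a second component), whereas the paper's two-line estimate insulates the nonexistence conclusion from precisely the compactness technicalities you flag as the main obstacle --- and which are indeed delicate as $d\to 0^+$, where the elliptic regularity constants in $-d\Delta u=F(u,v)$ degenerate, though your argument only ever needs compactness away from $d=0$. In the other direction, your explicit monotonicity and endpoint analysis supplies detail where the paper is terse: the paper simply asserts that a putative second component $\Sigma_2$ ``is also globally a smooth curve'' with $P_d\Sigma_2=(0,d^*)$ before invoking local uniqueness, and your version (a zero of $d'(s)$ would place the nonzero tangent in the kernel of the linearization, contradicting Proposition \ref{pro:non}; loops are excluded by monotonicity; both ends of a second component are forced to $d\to 0^+$, contradicting monotonicity) is the honest justification of that assertion. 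In short: same method, with the paper buying a cheaper, self-contained nonexistence proof, and your write-up buying a more complete account of the component-elimination step.
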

\begin{proof}
From  {\bf (A7)}, $f(u,v)<f(0,0)$ for any $(u,v)\in \R^+\times\R^+$. If $(u,v)$ is a positive solution of \eqref{52}, by integrating
\begin{equation*}
  -duu''=u^2f(u,v)<u^2 f(0,0), \;\; u(0)=u(L)=0,
\end{equation*}
we obtain
\begin{equation*}
  d\int_0^L [u'(x)]^2 dx<f(0,0)\int_0^L u^2(x) dx\leq \frac{f(0,0)}{\lambda_1}\int_0^L [u'(x)]^2 dx.
\end{equation*}
This implies that $d\leq \ds \frac{f(0,0)}{\lambda_1}=\frac{F_u(0,0)}{\lambda_1}=\frac{a}{\lambda_1}=d^*$. Hence \eqref{52} has no positive solution when $d\geq d^*$. On the other hand, the existence of  positive solution of \eqref{52} has been shown in Theorem \ref{thm3}. In particular, for $d\in (d^*-\epsilon,d^*)$, \eqref{52} has a positive solution $(u_d,v_d)$ so that $\ds\lim_{d\to (d^*)^{-}}(u_d,v_d)=(0,0)$, and these solutions are on a curve $\Gamma_1=\{(d(s),u(s),v(s)):s\in (0,\delta_1)\}$. Note now the direction of the curve $\Gamma_1$ is given by
\begin{equation*}
    d'(0)=(f_u(0,0)+f_v(0,0)M)\frac{\ds\int_0^L\phi^3_1(x)dx}{\lambda_1\ds\int_0^L\phi^2_1(x)dx}<0,
\end{equation*}
as $b=0$, $p=2f_u(0,0)<0$, $q=f_v(0,0)<0$, $r=0$, and $M=k/(a\tau+1)>0$. From Theorem \ref{thm3}, $\Gamma_1\subset \Sigma_1$ which is a connected component of the set of positive solution $\Sigma$ of \eqref{52}, and $P_d\Sigma_1= (0,d^*)$. From Proposition \ref{pro:non}, any positive solution on $\Sigma_1$ is non-degenerate, so $\Sigma_1$ is locally a smooth curve at any $(d,u_d,v_d)\in\Sigma_1$ hence $\Sigma_1$ can be globally parameterized by $d\in (0,d^*)$. Suppose that for some $d\in (0,d^*)$, there is another positive solution $(d,\hat{u_d},\hat{v_d})$ not on $\Sigma_1$, then using the same argument and Proposition \ref{pro:non}, we can show that $(d,\hat{u_d},\hat{v_d})$ is on another connected component $\Sigma_2$ of $\Sigma$, and $\Sigma_2$ is also globally a smooth curve. We also have $P_d\Sigma_2= (0,d^*)$ as $d=d^*$ is the only bifurcation point for positive solutions of \eqref{52}. But the local bifurcation result in Theorem \ref{thm:3.1} shows that near $d=d^*$ the positive solution is unique for \eqref{52}, which contradicts with the existence of two solutions $(d,u_d,v_d)$ and $(d,\hat{u_d},\hat{v_d})$. So such a second component $\Sigma_2$ cannot exist, and  all positive solutions of \eqref{52} are on the smooth curve $\Sigma_1=\{(d,u_d(x),v_d(x)):0<d<d^*\}$. In particular, the positive solution of \eqref{52} is unique for $0<d<d^*$.
\end{proof}

\section{Applications}

In this section, we apply the previous main results obtained in Sections 2-6 to the following logistic type and Nicholson's blowfly type models with nonlocal delay.

\subsection{Logistic type models}
We consider a modified Hutchinson's equation with diffusion and nonlocal delay:
\begin{equation}
  \label{66}
  \begin{cases}
     u_t(x,t)=d\Delta u(x,t)+\kappa u(x,t)(1-Au(x,t)-B(g\ast\ast u)(x,t)),&~x\in\Omega,~t>0,\\
     u(x,t)=0,&~x\in\partial\Omega,~t>0,
  \end{cases}
\end{equation}
where $\kappa>0$ is the maximum growth rate per capita, the parameters $A,B>0$ denote the portions of instantaneous and previous dependence of the growth rate, respectively.

Then by Section 2, the system (\ref{66}) is equivalent to the following system:
\begin{equation}\label{67}
  \begin{cases}
    u_t(x,t)=d\Delta u(x,t)+\kappa u(x,t)(1-Au(x,t)-Bv(x,t)),&~x\in\Omega,~t>0,\\
    \ds v_t(x,t)=d\Delta v(x,t)+\frac{1}{\tau}(u(x,t)-v(x,t)),&~x\in\Omega,~t>0,\\
     u(x,t)=v(x,t)=0,&~x\in\partial\Omega,~t>0.
  \end{cases}
\end{equation}
Let $F(u,v)=\kappa u(1-Au-Bv)$ and $H(u)=u$. It is easy to compute that
\begin{equation*}
  \begin{split}
    &a=F'_u(0,0)=\kappa>0,~b=F'_v(0,0)=0,~k=H'(0)=1,\\
    &p=F_{uu}(0,0)=-2A\kappa,~q=F_{uv}(0,0)=-B\kappa,~r=F_{vv}(0,0)=0,~l=H''(0)=0.
  \end{split}
\end{equation*}
By Theorem \ref{thm:3.1}(2), Theorem \ref{50}(1), Theorem \ref{thm3} and Theorem \ref{thm4}, we obtain the following results:
\begin{proposition}\label{pro1}
  Suppose that $A,B,\kappa,\tau>0$, and denote $d^*=\ds\frac{\kappa}{\lambda_1}$.
 \begin{enumerate}
\item System (\ref{67}) has at least one  positive steady state solution $(u_d(x),v_d(x))$ for any $d\in(0,d^*)$ and has no positive steady state solution for $d>d^*$; the positive steady state $(u_d(x),v_d(x))$ satisfies $u_d(x),v_d(x)\leq 1/A$; there is a connected component $\Sigma_1$ of the set of positive steady state solutions of \eqref{66} such that $P_d\Sigma_1=(0,d^*)$; near $d=d^*$, $\Sigma_1$ is a smooth curve $\{(d,u_d(\cdot),v_d(\cdot)): d^*-\epsilon<d<d^*\}$ such that $\ds\lim_{d\to (d^*)^-}u_d(\cdot)=\lim_{d\to (d^*)^-}v_d(\cdot)=0$, and $(u_d(\cdot),v_d(\cdot))$ is locally asymptotically stable for $d\in(d^*-\epsilon,d^*)$.
\item When $d>d^*$, the trivial steady state solution $u=0$ is globally asymptotically stable for  (\ref{67}); and when $0<d<d^*$, $u=0$ is unstable.
\item For $\Omega=(0,L)\subset \R^1$, the positive steady state solution $u_d(x)$ of system (\ref{67}) is unique and non-denegerate for $d\in(0,d^*)$.
     \end{enumerate}
\end{proposition}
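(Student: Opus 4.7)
The plan is to verify the hypotheses of Theorems \ref{thm:3.1}, \ref{50}, \ref{thm3}, and \ref{thm4} for $F(u,v) = \kappa u(1 - Au - Bv)$ and $H(u) = u$, supplemented by a direct comparison argument for the global stability in part (2).

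I would first compute the relevant derivatives at the origin: $a = \kappa$, $b = 0$, $k = 1$, $p = -2A\kappa$, $q = -B\kappa$, $r = 0$, $l = 0$, which makes conditions \textbf{(A1')}, \textbf{(A2)}, \textbf{(A3)} trivial. Proposition \ref{lem10}(1) (the $b=0$ case) then gives $d^*(\tau) = \kappa/\lambda_1$, independent of $\tau$. Condition \textbf{(A4)} holds with $F_1(u) = \kappa(1 - Au)$, $u^* = 1/A$, $K_0 = \kappa$, and Lemma \ref{lem5} yields the upper bounds $u_d(x) \le 1/A$ and $v_d(x) \le 1/A$ since $H^* = \max_{0 \le u \le 1/A} u = 1/A$.

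For part (1), Theorem \ref{thm:3.1} produces the local curve $\Gamma_1$ near $d^*$; with $M = 1/(\kappa\tau + 1)$, formula \eqref{first} reduces to
\begin{equation*}
d'(0) = -\frac{\kappa(A + BM)}{\lambda_1}\cdot\frac{\int_\Omega \phi_1^3\, dx}{\int_\Omega \phi_1^2\, dx} < 0,
\end{equation*}
so the bifurcation is subcritical in $d$. Since $k(p + 2qM + rM^2) + Mbl = -2\kappa(A + BM) < 0$, Theorem \ref{50}(1) gives local asymptotic stability of $(u_d, v_d)$ for $d \in (d^* - \epsilon, d^*)$. Theorem \ref{thm3} supplies the connected component $\Sigma_1 \supset \Gamma_1$ with $P_d \Sigma_1 = (0, d_0)$; since here $d^{**} = K_0/\lambda_1$ coincides with $d^*$, we obtain $d_0 = d^*$ (the supremum is not attained, as no positive solution exists at $d = d^*$), so solutions exist on all of $(0, d^*)$.

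For part (2), instability of $u = 0$ when $d < d^*$ follows from the decoupled linearization at $(0,0)$ (since $b = 0$): the Dirichlet principal eigenvalue of $d\Delta + \kappa$ is $\kappa - d\lambda_1 > 0$ in that regime. For global stability when $d > d^*$, I would argue by comparison: nonnegativity of $(u,v)$ gives $F(u,v) \le \kappa u$, hence $u_t \le d\Delta u + \kappa u$ with homogeneous Dirichlet condition. Since the linear problem decays exponentially to zero whenever $d\lambda_1 > \kappa$, comparison forces $\|u(\cdot,t)\|_\infty \to 0$, and then the variation-of-constants representation for $v$ (Lemma \ref{lem3} with $k = 1/\tau$ and forcing $H(u)/\tau$) yields $\|v(\cdot,t)\|_\infty \to 0$. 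For part (3), condition \textbf{(A7)} is immediate: $f(u,v) = \kappa(1 - Au - Bv)$ gives $f_u, f_v < 0$ and $H'(u) = 1 > 0$, so Theorem \ref{thm4} delivers uniqueness and non-degeneracy on $(0, d^*)$.

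The main potential obstacle is the global stability step in part (2), which is not covered by the abstract bifurcation theorems; the linear comparison trick succeeds only because the logistic nonlinearity is dominated by its derivative at zero, and a more delicate argument (e.g.\ a Lyapunov functional) would be needed in the absence of this feature. A secondary subtlety is the degenerate coincidence $d^* = d^{**}$ when invoking Theorem \ref{thm3}, which must be handled by interpreting $d_0$ as an unattained supremum and using the local curve $\Gamma_1$ to certify that solutions persist arbitrarily close to $d^*$ from below.
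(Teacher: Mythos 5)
Your proposal is correct and follows essentially the paper's own route: verify \textbf{(A1')}--\textbf{(A4)} and \textbf{(A7)} for $F(u,v)=\kappa u(1-Au-Bv)$, $H(u)=u$, then invoke Theorems \ref{thm:3.1}, \ref{50}, \ref{thm3} and \ref{thm4}, with Lemma \ref{lem5} giving the bound $u_d,v_d\le 1/A$; your derivative data and the sign computation $d'(0)<0$ agree with the paper's (and your $M=1/(\kappa\tau+1)$ is the correct value from \eqref{13}, fixing a typo in the paper's proof, which writes $M=1/(\tau+1)$). There are two genuine, if small, divergences. First, for global stability of $u=0$ when $d>d^*$, the paper bounds $F(u,v)\le \kappa u(1-Au)$ and cites known global-stability results for the Dirichlet logistic equation, whereas you use the cruder linear bound $F(u,v)\le\kappa u$ together with exponential decay of the semigroup of $d\Delta+\kappa$ when $d\lambda_1>\kappa$, followed by the variation-of-constants formula for $v$; both are valid comparison arguments, yours being more self-contained, while the paper's logistic comparison would also handle the borderline value $d=d^*$ (not claimed in the statement). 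Second, for nonexistence when $d\ge d^*$ and the identification $P_d\Sigma_1=(0,d^*)$, the paper does not rely on Theorem \ref{thm3} part 1 alone but explicitly appeals to the strict integral estimate from the proof of Theorem \ref{thm4} (for positive solutions, $f(u,v)<f(0,0)$ forces $d<d^*$ strictly) --- precisely because, as you correctly flagged, $d^{**}=K_0/\lambda_1$ collapses onto $d^*$ and the interval $[d^*,d^{**})$ in Theorem \ref{thm3} becomes vacuous. Your ``unattained supremum'' fix is the right idea, but to make it rigorous you should record the strict inequality $\lambda_1 d\int_\Omega u\phi_1\,dx=\kappa\int_\Omega u(1-Au-Bv)\phi_1\,dx<\kappa\int_\Omega u\phi_1\,dx$ for any positive solution, which excludes $d=d^*$ as well and pins $d_0=d^*$; this is exactly the thm4-style argument the paper substitutes at that point.
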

\begin{proof}
 1.  It is easy to verify that the conditions {\bf(A1)}-{\bf(A3)} and {\bf (A1')} hold and according to (\ref{first}), we have
  \begin{equation}\label{d0}
    d'(0)=-\frac{\kappa(A+BM)\int_{\Omega}\phi^3_1(x)dx}{\lambda_1\int_{\Omega}\phi^2_1(x)dx}<0,
  \end{equation}
  where $M=1/(\tau+1)>0$. Then the local bifurcation and stability of positive steady state solutions of \eqref{67} follows from Theorem \ref{thm:3.1} part 2 and Theorem \ref{50} part 1. Define $F_1(u)=\kappa(1-Au)$ which satisfies $F(u,v)\leq F_1(u)u$, $F_1(1/A)=0$ and $0<F_1(u)<\kappa$ for $u\in(0,1/A)$ and $F_1(u)<0$ for $u>1/A$. That is, {\bf (A4)} holds. Then by Theorem \ref{thm3},  (\ref{67}) has at least one  positive steady state solution $u_d(x)$ for any $d\in(0,d^*)$, and \eqref{66} has no positive steady state solution for $d>d^*$ following the proof of Theorem \ref{thm4}. Moreover from Lemma \ref{lem5}, any positive steady state satisfies $u_d(x),v_d(x)\leq 1/A$.

  2. When $d>d^*$, we have $u_t\leq d\Delta u+\kappa u(1-Au)$, then the global stability of $u=0$ follows from well-known results for the logistic reaction-diffusion model (see for example \cite{CC2003}). When $d<d^*$, it is standard to show that $u=0$ is unstable.

  3. Let $f(u,v)=\kappa (1-Au-Bv)$. Clearly $f_u(u,v)=-\kappa A<0$ and $f_v(u,v)=-\kappa B<0$ so the condition {\bf (A7)} holds. By Theorem \ref{thm4}, the system (\ref{67}) has a unique positive solution $(u_d(x),v_d(x))$ for $d\in(0,d^*)$ when $\Om=(0,L)$.
\end{proof}

As a numerical  example, we consider (\ref{66}) with $\kappa=1,~A=0.5,~B=0.4,~\tau=0.5,~\Omega=(0,\pi)$ and choose the initial condition $\eta(x,t)=0.1\sin{x},~t\in(-\infty,0)$. When $d=1.05>d^*=\kappa/\lambda_1=1$, the zero solution  is globally asymptotically stable from Proposition \ref{pro1}, illustrated in Fig. \ref{Figure7} (A). On the other hand when $d=0.5<d^*=1$, the zero solution loses its stability and the unique positive steady state solution appears to be asymptotically stable as shown in Fig. \ref{Figure7}  (B).

\begin{figure}[h]
\includegraphics[width=0.4\textwidth]{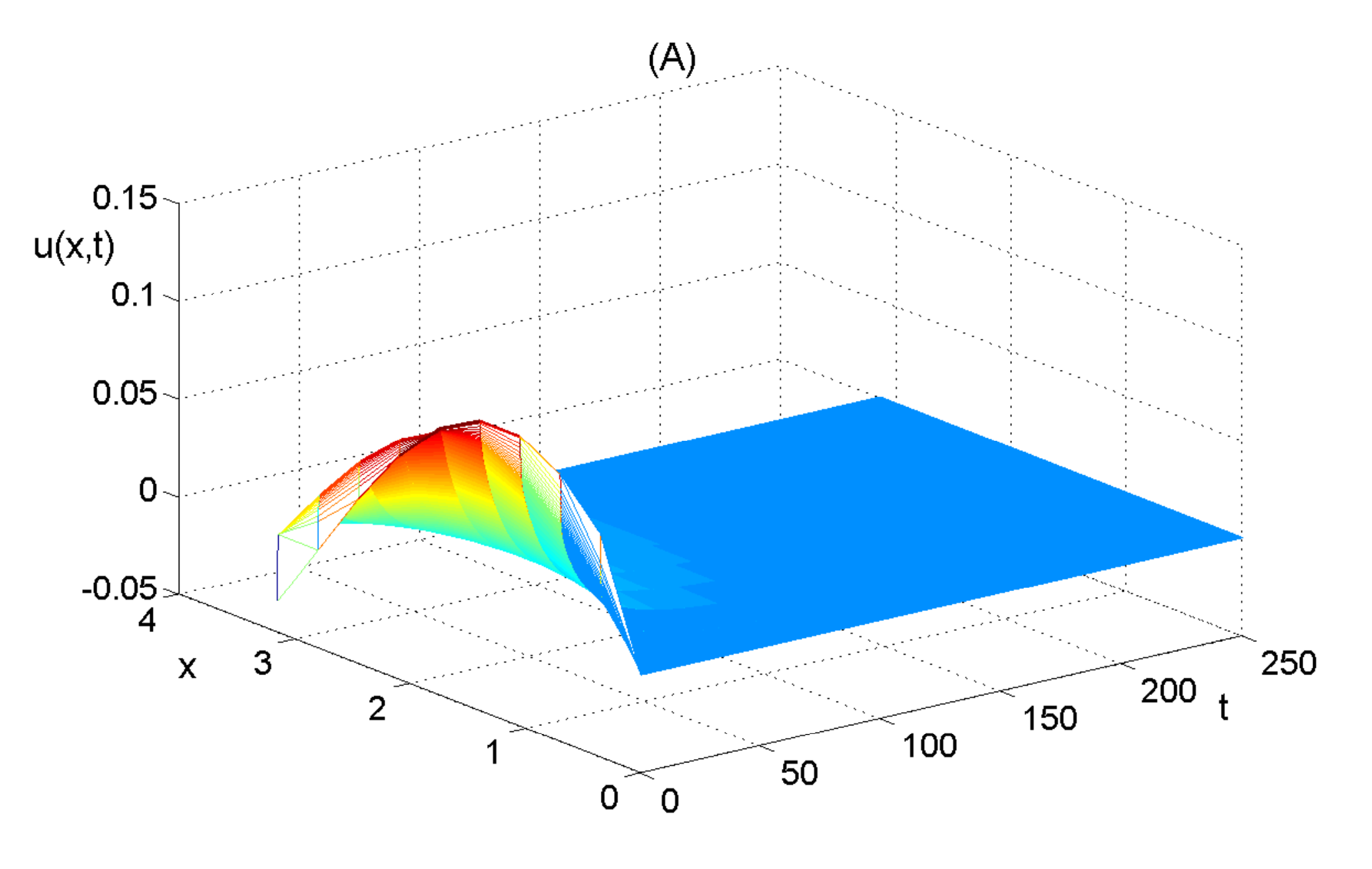}\hspace{0.1in}\includegraphics[width=0.4\textwidth]{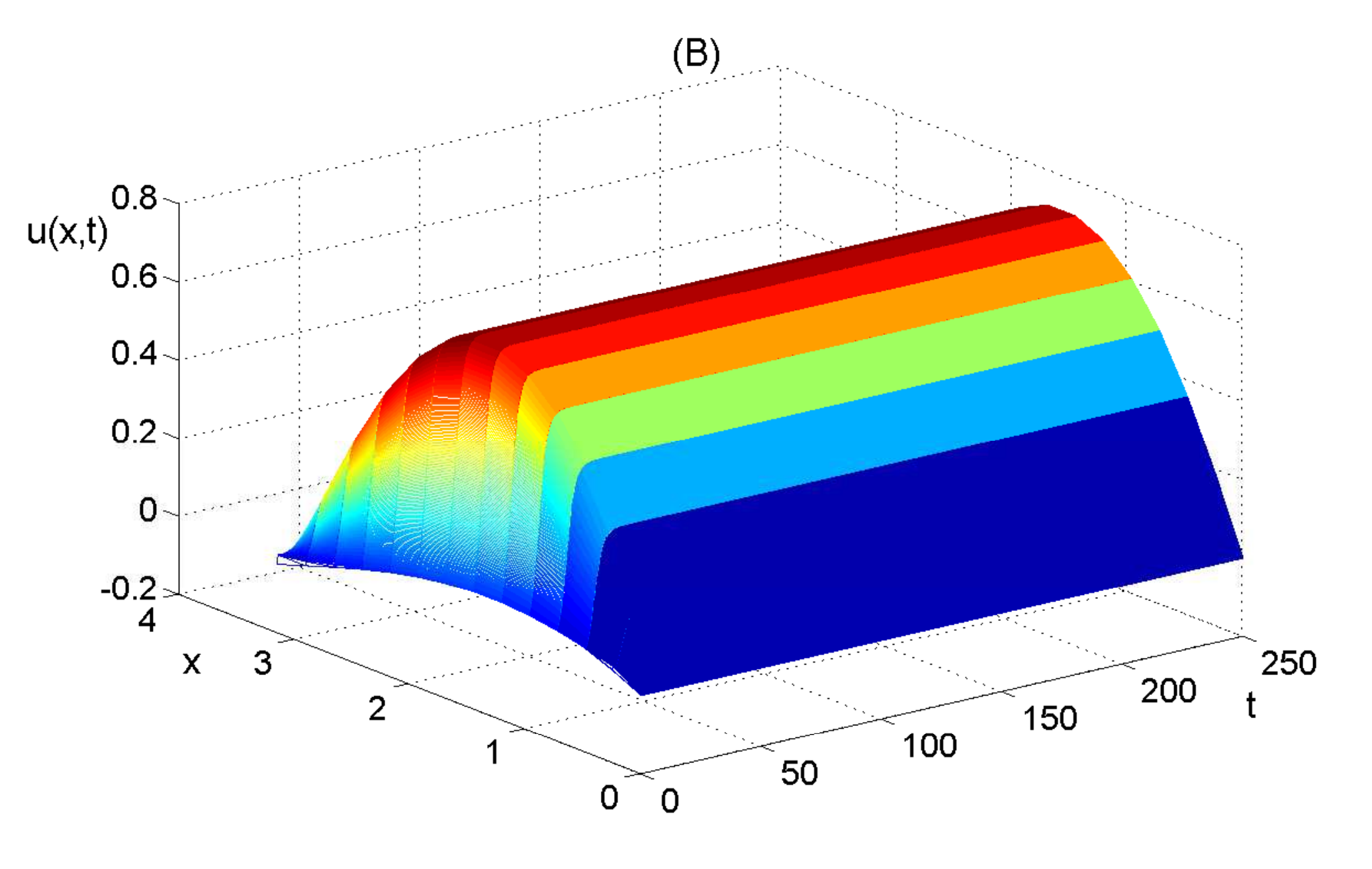}
\caption{Numerical simulations of (\ref{66}) with $\kappa=1,~A=0.5,~B=0.4,~\tau=0.5$ and $\eta(x,t)=0.1\sin{x},~t\in(-\infty,0)$. (A): $d=1.05$, the solution converges to the trivial solution $u\equiv 0$; (B): $d=0.5$, the solution converges to a positive steady state solution.}\label{Figure7}
\end{figure}

It is an interesting open question whether the uniqueness of positive solution of \eqref{66} holds for the general domain $\Omega\in \R^n$ with $n\ge 2$, and the local/global stability of the positive solution of \eqref{66} is also not known even in the case of $n=1$. Note that the non-degeneracy shown in Proposition \ref{pro:non} rules out the zero eigenvalue of linearized equation, but Hopf bifurcation can still occur to destabilize the positive steady state. When the boundary condition of \eqref{66} is Neumann one, it is known that the positive steady state is unique and is constant in space \cite{Ni2018}. We also remark that the bifurcation at $d=d^*$ is supercritical so that the bifurcating positive steady state solutions are stable ones. A subcritical bifurcation is possible in the following variant of \eqref{66}:
\begin{equation}
  \label{66vv}
  \begin{cases}
     u_t(x,t)=d\Delta u(x,t)+\kappa u(x,t)(1+Au(x,t)-B(g\ast\ast u)(x,t)-Cu^2(x,t)),&~x\in\Omega,~t>0,\\
     u(x,t)=0,&~x\in\partial\Omega,~t>0,
  \end{cases}
\end{equation}
where $\kappa,A,B,C>0$. For \eqref{66vv}, results similar to the ones in Proposition \ref{pro1} can be proved and the equation \eqref{d0} becomes
\begin{equation}\label{d000}
    d'(0)=-\frac{\kappa(-A+BM)\int_{\Omega}\phi^3_1(x)dx}{\lambda_1\int_{\Omega}\phi^2_1(x)dx}.
  \end{equation}
So the bifurcation is subcritical if $-A+BM<0$, and system \eqref{66vv} has multiple positive steady state solutions for $d\in (d^*,d^*+\epsilon)$.

Another example with similar structure is the food-limited population model considered in \cite{Gourley2002}:
\begin{equation}
  \label{88vv}
  \begin{cases}
     \ds u_t(x,t)=d\Delta u(x,t)+\kappa u(x,t)\frac{1-Au(x,t)-B(g\ast\ast u)(x,t)}{1+cAu(x,t)+cB(g\ast\ast u)(x,t)},&~x\in\Omega,~t>0,\\
     u(x,t)=0,&~x\in\partial\Omega,~t>0,
  \end{cases}
\end{equation}
where $\kappa,A,B,c>0$. Note that when $c=0$, \eqref{88vv} is reduced to \eqref{66}. Indeed all results in Proposition \ref{pro1} also hold for \eqref{88vv} as well.

 \subsection{Nicholson's blowfly type models}

Consider the diffusive Nicholson's Blowflies equation with nonlocal delay as follows \cite{LiRuanWang2007}:
\begin{equation}\label{70}
  \begin{cases}
    u_t(x,t)=d\Delta u(x,t)-\chi u(x,t)+\vartheta (g\ast\ast u)(x,t)e^{-\nu (g\ast\ast u)(x,t)},&~x\in\Omega,~t>0,\\
    u(x,t)=0,&~x\in\partial\Omega,~t>0.
  \end{cases}
\end{equation}
Here $\chi$ is the per capita daily adult death rate, $\vartheta$ is the maximum per capita daily egg production rate, $1/\nu$ is the size at which the blowfly population reproduces at its maximum rate, and $\tau$ is the generation time.
From the equivalence relation shown in Section 2, the system (\ref{70}) is equivalent to the reaction-diffusion system:
\begin{equation}\label{71}
  \begin{cases}
    u_t(x,t)=d\Delta u(x,t)-\chi u(x,t)+\vartheta v(x,t)e^{-\nu  v(x,t)},&~x\in\Omega,~t>0,\\
    \ds v_t(x,t)=d\Delta v(x,t)+\frac{1}{\tau}(u(x,t)-v(x,t)),&~x\in\Omega,~t>0,\\
    u(x,t)=v(x,t)=0,&~x\in\partial\Omega,~t>0,
  \end{cases}
\end{equation}
whose steady steady state solutions satisfy the following equations:
\begin{equation}\label{72}
  \begin{cases}
    -d\Delta u(x)=-\chi u(x)+\vartheta v(x)e^{-\nu  v(x)},&~x\in\Omega,\\
    \ds -d\Delta v(x)=\frac{1}{\tau}(u(x)-v(x)),&~x\in\Omega,\\
    u(x)=v(x)=0,&~x\in\partial\Omega.
  \end{cases}
\end{equation}
 Let $F(u,v)=-\chi u+\vartheta ve^{-\nu  v},~H(u)=u$. It is easy to compute that from (\ref{41}),
 \begin{equation*}
  \begin{split}
    &a=F_u(0,0)=-\chi<0,~b=F_v(0,0)=\vartheta,~k=H'(0)=1,~H(0)=0,\\
    &p=F_{uu}(0,0)=0,~q=F_{uv}(0,0)=0,~r=F_{vv}(0,0)=-2\vartheta^2<0,~l=H''(0)=0.
  \end{split}
\end{equation*}
By Theorem \ref{thm:3.1}(2), Theorem \ref{50}(1) and Theorem \ref{thm5}, we obtain the following results:
\begin{proposition}
  \label{pro2}
  Suppose that $\chi,\nu,\tau>0$ and $\vartheta>\chi$, and denote
  \begin{equation}\label{dstar}
    d^*=\ds\frac{1}{2\lambda_1\tau}(-\chi\tau-1+\sqrt{(-\chi\tau+1)^2+4\vartheta\tau}).
\end{equation}
 \begin{enumerate}
    \item System \eqref{71} has at least one positive steady state solution $(u_d(x),v_d(x))$ for any $d\in (0,d^*)$ and has no positive steady state solution for $d>d^*$; there is a connected component $\Sigma_1$ of the set of positive steady state solutions of \eqref{71} such that $P_d\Sigma_1=(0,d^*)$; near $d=d^*$, $\Sigma_1$ is a smooth curve $\{(d,u_d(\cdot),v_d(\cdot)): d^*-\epsilon<d<d^*\}$ such that $\ds\lim_{d\to (d^*)^-}u_d(\cdot)=\lim_{d\to (d^*)^-}v_d(\cdot)=0$, and $(u_d(\cdot),v_d(\cdot))$ is locally asymptotically stable for $d\in(d^*-\epsilon,d^*)$.
\item The positive steady state $(u_d(x),v_d(x))$ satisfies $u_d(x),v_d(x)\leq \vartheta/(\nu \chi e)$ for all $0<d<d^*$. Moreover if  $(u_d(x),v_d(x))$  satisfies $v_d(x)\leq 1/\nu$, then it is locally asymptotically stable.
      \end{enumerate}
\end{proposition}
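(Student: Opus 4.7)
The plan is to verify that the Nicholson nonlinearity fits the structural framework of Sections 3--5 and then specialize those results. With $F(u,v)=-\chi u+\vartheta v e^{-\nu v}$ and $H(u)=u$, both functions are $C^2$ and vanish at the origin with $H'(0)=1>0$, so \textbf{(A1)}, \textbf{(A1')} and \textbf{(A2)} hold. Since $a=-\chi$, $b=\vartheta$, $k=1$, the inequality $a+bk=\vartheta-\chi>0$ is exactly the hypothesis $\vartheta>\chi$, giving \textbf{(A3)}. For \textbf{(A5)}, using $e^{-\nu v}\le 1$ for $v\ge 0$, one has $F(u,v)\le -\chi u+\vartheta v$ and trivially $H(u)\le u$, so $K_1=\chi$, $K_2=\vartheta$, $K_3=1$; since $F_2(v)=\vartheta v e^{-\nu v}$ attains its global maximum $\vartheta/(\nu e)$ at $v=1/\nu$, \textbf{(A6a)} holds with $K_4=\vartheta/(\nu e)$. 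Substituting into \eqref{H7} one sees that the threshold $d^{***}$ there coincides with the $d^*$ of \eqref{dstar}.

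Part 1 is then immediate from the general theory: Theorem \ref{thm5}(1) gives nonexistence for $d>d^*$, while Theorem \ref{thm5}(2) produces the connected component $\Sigma_1\supseteq\Gamma_1$ with $P_d\Sigma_1=(0,d^*)$. The uniform bounds in the first half of Part 2 follow from Lemma \ref{lem4}(1): one has $u_d\le K_4/K_1=\vartheta/(\nu\chi e)$, and since $H$ is the identity $v_d\le H(u_d(x_1))\le\vartheta/(\nu\chi e)$ as well. For the direction and local stability of the branch at $d^*$, compute $p=q=0$, $r=F_{vv}(0,0)=-2\vartheta\nu<0$, $l=H''(0)=0$; then the stability indicator $k(p+2qM+rM^2)+Mbl=rM^2$ is strictly negative, so formula \eqref{first} gives $d'(0)<0$ and Theorem \ref{50}(1) yields local asymptotic stability for $d\in(d^*-\epsilon,d^*)$.

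The remaining assertion of Part 2, namely local asymptotic stability of every $(u_d,v_d)$ satisfying $v_d(x)\le 1/\nu$, is the main obstacle. The linearization of \eqref{71} at $(u_d,v_d)$ under Dirichlet conditions is
\begin{equation*}
L\begin{pmatrix}\phi\\ \psi\end{pmatrix}=\begin{pmatrix}d\Delta\phi-\chi\phi+\vartheta(1-\nu v_d)e^{-\nu v_d}\psi\\ d\Delta\psi+\tau^{-1}(\phi-\psi)\end{pmatrix}.
\end{equation*}
Under the hypothesis $v_d\le 1/\nu$ both off-diagonal entries of the reaction part are pointwise nonnegative (with $\tau^{-1}>0$), so $L$ is cooperative with irreducible coupling. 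After a suitable spectral shift the Krein--Rutman theorem yields a real simple principal eigenvalue $\mu_1$ with positive eigenvector, and $\operatorname{Re}\mu\le\mu_1$ for every other spectral value; local stability therefore reduces to showing $\mu_1<0$.

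To prove $\mu_1<0$ I would test $L$ against the positive vector $(u_d,v_d)$ itself. Using the steady state equations $d\Delta u_d=\chi u_d-\vartheta v_d e^{-\nu v_d}$ and $d\Delta v_d=-\tau^{-1}(u_d-v_d)$, an elementary simplification yields
\begin{equation*}
L\begin{pmatrix}u_d\\ v_d\end{pmatrix}=\begin{pmatrix}-\vartheta\nu v_d^2 e^{-\nu v_d}\\ 0\end{pmatrix},
\end{equation*}
which is $\le 0\cdot(u_d,v_d)^T$ componentwise with strict inequality in the first slot throughout $\Omega$ since $v_d>0$ there. The strict-comparison characterization of the principal eigenvalue for irreducible cooperative elliptic systems, in the spirit of \cite{LG1993,CasalEilbeck1994}, then forces $\mu_1<0$. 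The delicate step is propagating the one-component strict inequality to a strict spectral gap; this rests on the irreducibility of the coupling together with a component-wise strong maximum principle, after which local asymptotic stability of $(u_d,v_d)$ follows.
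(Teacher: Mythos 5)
Your proposal is correct, and it follows the paper's skeleton for most of the argument: verifying {\bf (A1')}--{\bf (A3)}, {\bf (A5)}, {\bf (A6a)} with $K_1=\chi$, $K_2=\vartheta$, $K_3=1$, $K_4=\vartheta/(\nu e)$; getting existence, the component $\Sigma_1$, and the bounds from Theorem \ref{thm5} and Lemma \ref{lem4}; and getting $d'(0)<0$ and local stability near $d^*$ from the indicator $k(p+2qM+rM^2)+Mbl=rM^2<0$ via Theorem \ref{50}. You diverge at two points, both legitimately. First, for nonexistence when $d>d^*$ you observe that because $K_1=-a$, $K_2=b$, $K_3=k$ here, the \emph{a priori} threshold $d^{***}$ of \eqref{H7} coincides exactly with the bifurcation value \eqref{dstar}, so Theorem \ref{thm5}(1) already yields the claim; the paper instead runs a separate duality argument, pairing the steady-state inequalities against the positive solution $(w,z)=(\phi_1,M\vartheta\tau\phi_1)$ of the linear system at $d^*$, which has the marginal advantage of also excluding $d=d^*$ (it gives strict $d<d^*$), while your route gets the stated claim with no extra computation. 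Second, for stability under $v_d\le 1/\nu$ the paper checks cooperativity and sublinearity and then cites \cite[Theorem 2.3]{Cui2013}, whereas you reprove the spectral fact directly: your identity $L(u_d,v_d)^T=(-\vartheta\nu v_d^2 e^{-\nu v_d},\,0)^T$ is correct and is precisely the sublinearity inequality evaluated at the steady state, exhibiting $(u_d,v_d)$ as a positive strict supersolution. The one step you leave as a sketch --- promoting the one-component strict inequality to $\mu_1<0$ --- closes in one line: pair against the principal eigenfunction $(\phi^*,\psi^*)>0$ of the adjoint system (also cooperative and irreducible, since $G_u=1/\tau>0$ and $F_v=\vartheta e^{-\nu v_d}(1-\nu v_d)\not\equiv 0$), which gives $\mu_1\int_\Omega(\phi^*u_d+\psi^*v_d)\,dx=-\vartheta\nu\int_\Omega\phi^*v_d^2e^{-\nu v_d}\,dx<0$, hence $\mu_1<0$; this is exactly the strict-comparison characterization you invoke from \cite{LG1993,CasalEilbeck1994}. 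One last remark: your value $r=F_{vv}(0,0)=-2\nu\vartheta$ is the correct one; the paper's $r=-2\vartheta^2$ (and the $\vartheta^2$ in its displayed $d'(0)$) is a typo, harmless since only the sign of $r$ enters the conclusion.
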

\begin{proof}
1. It is easy to verify that the conditions {(\bf{A1})}-{(\bf{A3})}, {\bf (A1')} hold and according to (\ref{first}), we have
  \begin{equation*}
    d'(0)=-\frac{-\vartheta^2M^2\int_{\Omega}\phi^3_1(x)dx}{\lambda_1(1+M^2\vartheta\tau)\int_{\Omega}\phi^2_1(x)dx}<0,
  \end{equation*}
  where
  \begin{equation*}
    M=\frac{2}{-\chi\tau+1+sqrt{(-\chi\tau+1)^2+4\vartheta \tau}}.
  \end{equation*}
    Then the local bifurcation and stability of positive steady state solutions of \eqref{71} follows from Theorem \ref{thm:3.1} part 2 and Theorem \ref{50} part 1. Let $K_1=\chi$, $K_2=\vartheta$, $K_3=1$  and $F_2(v)=\vartheta v e^{-\nu v}$. Then $F(u,v)\leq -K_1u+F_2(v)$ for $(u,v)\in \bar{\R}_+\times\bar{\R}_+$,  $F_2(v)\leq K_2 v$ for $v\in\bar{\R}_+$, and $H(u)\leq K_3 u$ for $u\in\bar{\R}_+$. So {\bf (A5)} is satisfied. Also $F_2(v)\le K_4=\vartheta/(\nu e)$ for $v\in\bar{\R}_+$ hence {\bf (A6a)} is satisfied. Then by Theorem \ref{thm5}, (\ref{71}) has at least one  positive steady state solution $(u_d(x),v_d(x))$ for any $d\in(0,d^*)$, and from Lemma \ref{lem4} part 1, any positive steady state satisfies $u_d(x),v_d(x)\leq \vartheta/(\nu \chi e)$.

     To prove \eqref{71} has no positive steady state solution for $d>d^*$, we notice that \eqref{72} implies that
     \begin{equation}\label{72a}
  \begin{cases}
    d\Delta u(x)-\chi u(x)+\vartheta v(x)\ge 0,&~x\in\Omega,\\
    \ds d\Delta v(x)+\frac{1}{\tau}u(x)-\frac{1}{\tau}v(x)=0,&~x\in\Omega,\\
    u(x)=v(x)=0,&~x\in\partial\Omega,
  \end{cases}
\end{equation}
 and on the other hand, $(w,z)=(\phi_1,M\vartheta\tau \phi_1)$ satisfies
 \begin{equation}\label{72b}
  \begin{cases}
    \ds d^*\Delta w(x)-\chi w(x)+ \frac{1}{\tau} z(x)= 0,&~x\in\Omega,\\
    \ds d^*\Delta z(x)+\vartheta w(x)-\frac{1}{\tau} z(x)=0,&~x\in\Omega,\\
    w(x)=z(x)=0,&~x\in\partial\Omega,
  \end{cases}
\end{equation}
where $d^*$ is defined in \eqref{dstar}. Multiplying the two equations in \eqref{72a} by $w$ and $z$, integrating and adding together, and subtracting the result of multiplying the two equations in \eqref{72b} by $u$ and $v$ and integrating and adding together, we obtain
\begin{equation*}
  0< (d-d^*)\int_{\Om}(\Delta w\cdot u+\Delta z\cdot v) dx=-(d-d^*)\la_1\int_{\Om} (wu+zv) dx,
\end{equation*}
which implies that $d<d^*$ as $u,v,w,z>0$.

2. Assume that a positive steady state solution  $(u_d(x),v_d(x))$ of \eqref{71} satisfies $v_d(x)\leq 1/\nu$. The linearized eigenvalue problem of \eqref{71} at $(u_d,v_d)$ is
 \begin{equation}\label{72c}
  \begin{cases}
    d\Delta \xi_1(x)-\chi \xi_1(x)+\vartheta e^{-\nu v_d(x)}(1-\nu v_d(x)) \xi_2 (x)=-\mu \xi_1(x),&~x\in\Omega,\\
    \ds d\Delta \xi_2(x)+\frac{1}{\tau}\xi_1(x)-\frac{1}{\tau}\xi_2(x)=-\mu \xi_2(x),&~x\in\Omega,\\
    \xi_1(x)=\xi_2(x)=0,&~x\in\partial\Omega.
  \end{cases}
\end{equation}
Since $v_d(x)\leq 1/\nu$, the system \eqref{72c} is cooperative in the sense that $F_v(u_d(x),v_d(x))=\vartheta e^{-\nu v_d(x)}(1-\nu v_d(x))>0$ and $G_u(u_d(x),v_d(x))=1/\tau>0$ (here $G(u,v)=(1/\tau)(u-v)$). Also the system \eqref{72c} is sublinear as
\begin{equation*}
\begin{split}
  &F(u_d,v_d)-u_dF_u(u_d,v_d)-v_dF_v(u_d,v_d)=\vartheta\nu v_d^2 e^{-\nu v_d}>0,\\
  &G(u_d,v_d)-u_dG_u(u_d,v_d)-v_dG_v(u_d,v_d)=0.
  \end{split}
\end{equation*}
Then from Theorem 2.3 of \cite{Cui2013}, the positive steady state solution  $(u_d(x),v_d(x))$ is locally asymptotically stable.
\end{proof}

In Proposition \ref{pro2}, the stability of positive steady state holds when the condition $v_d(x)\leq 1/\nu$ is satisfied. This is true when $d$ is close to $d^*$ (the bifurcation point), but it is not expected to be true when $d$ approaches to $0$. And the condition $v_d(x)\leq 1/\nu$ is also referred as the ``monotone" case for the Nicholson's blowfly model, while the ``non-monotone" case is the more complicated one.

As a numerical example, we consider (\ref{70}) with $\chi=0.8,~\vartheta=1,~\nu=0.6,~\tau=0.5,~\Omega=(0,\pi)$ and choose the initial condition $\eta(x,t)=0.1\sin{x},~t\in(-\infty,0)$. When $d=0.2>d^*=0.1362$, the zero solution $u\equiv 0$ is asymptotically stable, illustrated in Fig. \ref{Figure8} (A). However, when the  $d=0.1<d^*=0.1362$,  the zero solution loses its stability and a  positive steady state solution appears to be asymptotically stable as shown in Fig. \ref{Figure8} (B).

\begin{figure}[h]
\includegraphics[width=0.4\textwidth]{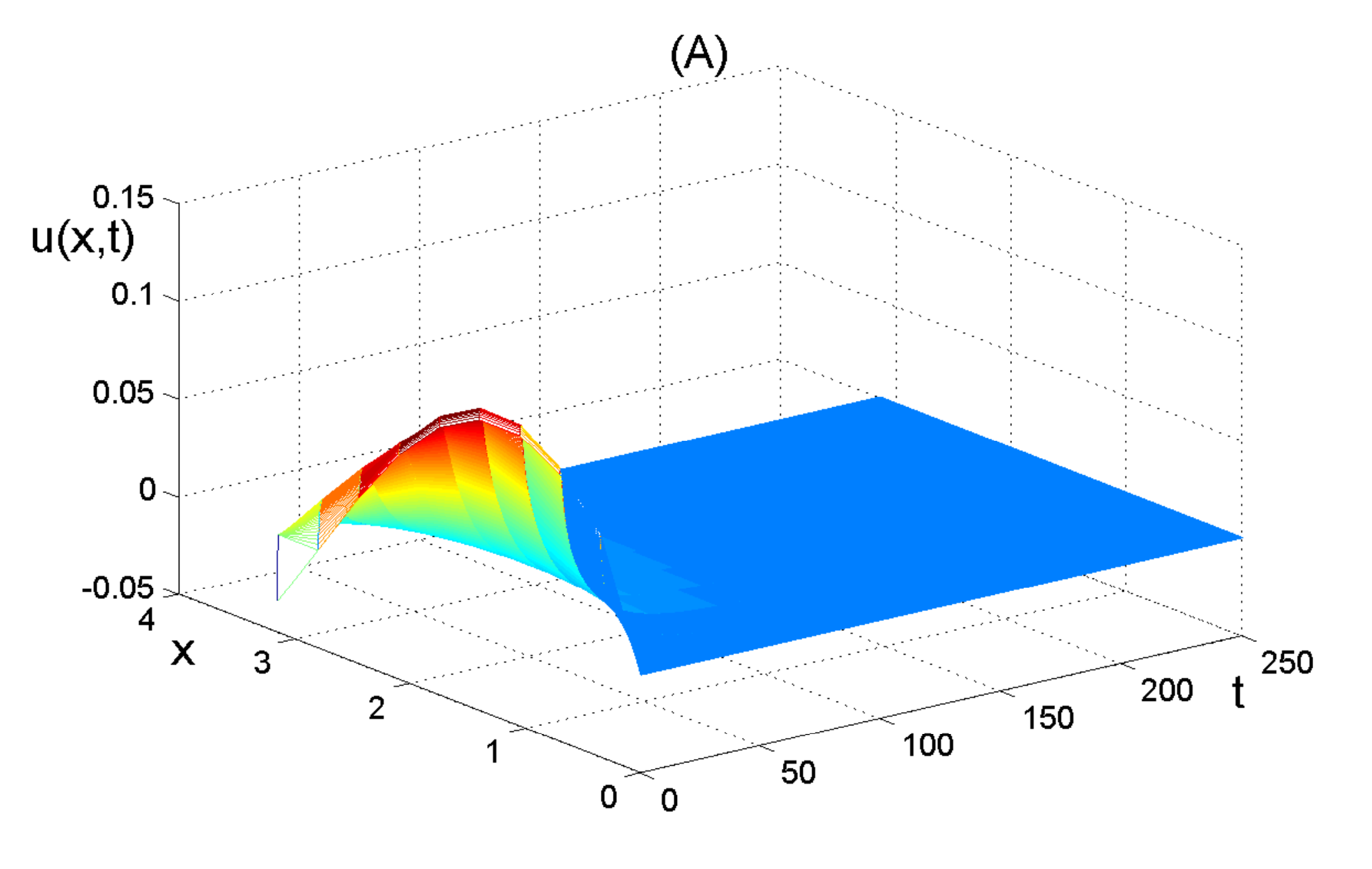}\hspace{0.1in}\includegraphics[width=0.4\textwidth]{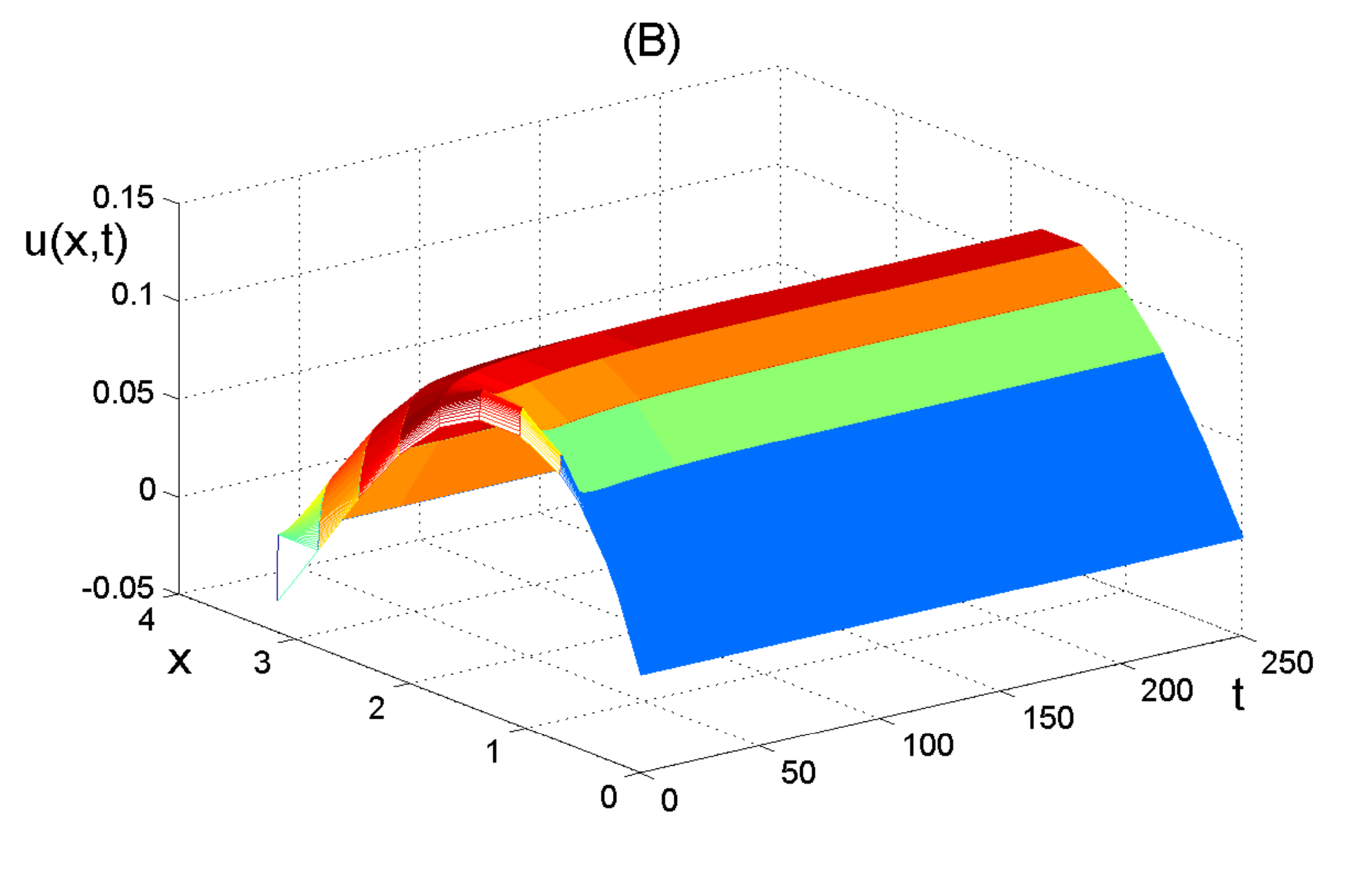}
\caption{Numerical simulations of (\ref{70}) with $\chi=0.8,~\vartheta=1,~\nu=0.6,~\tau=0.5,~\Omega=(0,\pi)$ and $\eta(x,t)=0.1\sin{x},~t\in(-\infty,0)$. (A): $d=0.2$, the solution converges to the trivial solution $u\equiv 0$; (B): $d=0.1$, the solution converges to a positive steady state solution.}\label{Figure8}
\end{figure}

A variant of the model \eqref{70} is
\begin{equation}\label{70a}
  \begin{cases}
    u_t(x,t)=d\Delta u(x,t)-\chi u(x,t)+\vartheta (g\ast\ast (ue^{-\nu u})(x,t)),&~x\in\Omega,~t>0,\\
    u(x,t)=0,&~x\in\partial\Omega,~t>0,
  \end{cases}
\end{equation}
and it is equivalent to
\begin{equation}\label{71a}
  \begin{cases}
    u_t(x,t)=d\Delta u(x,t)-\chi u(x,t)+\vartheta v(x,t),&~x\in\Omega,~t>0,\\
    \ds v_t(x,t)=d\Delta v(x,t)+\frac{1}{\tau}(u(x,t)e^{-\nu u(x,t)}-v(x,t)),&~x\in\Omega,~t>0,\\
    u(x,t)=v(x,t)=0,&~x\in\partial\Omega,~t>0.
  \end{cases}
\end{equation}
In this case, our theory in previous sections can also be applied with $F(u,v)=\chi u+\vartheta v$ and $H(u)=ue^{-\nu u}$, which satisfy {(\bf{A1})}-{(\bf{A3})}, {\bf (A1')}, {\bf (A5)} and {\bf (A6b)}. We can similarly prove
\begin{proposition}
  \label{pro3}
  Suppose that $\chi,\nu,\tau>0$ and $\vartheta>\chi$, and let $d^*$ be defined as in \eqref{dstar}. Then results in Proposition \ref{pro2} hold for \eqref{71a} except that the positive steady state $(u_d(x),v_d(x))$ satisfies $u_d(x)\leq \vartheta/(\chi\nu)$ and $v_d(x)\leq 1/\nu$ for all $0<d<d^*$, and if  $(u_d(x),v_d(x))$  satisfies $u_d(x)\leq 1/\nu$, then it is locally asymptotically stable.
\end{proposition}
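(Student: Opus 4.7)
The plan is to transplant the proof of Proposition \ref{pro2} to \eqref{71a}, noting that the nonlinearity now resides in $H(u)=ue^{-\nu u}$ rather than in the $v$-argument of $F$. First I verify the standing hypotheses for $F(u,v)=-\chi u+\vartheta v$ and $H(u)=ue^{-\nu u}$: the derivatives at zero are $a=-\chi$, $b=\vartheta$, $k=1$, $p=q=r=0$ and $l=H''(0)=-2\nu$, so {\bf (A1)}, {\bf (A1')}, {\bf (A2)} hold and {\bf (A3)} reduces to $\vartheta>\chi$. Substituting $(a,b,k)$ into \eqref{40} recovers the $d^*$ of \eqref{dstar}, so Theorem \ref{thm:3.1} provides a local branch bifurcating from $(d^*,0,0)$. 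The direction constant \eqref{first} collapses to
\begin{equation*}
d'(0)=\frac{Mbl\int_{\Om}\phi_1^3\,dx}{2\lambda_1(k+M^2b\tau)\int_{\Om}\phi_1^2\,dx}=-\frac{M\vartheta\nu\int_{\Om}\phi_1^3\,dx}{\lambda_1(1+M^2\vartheta\tau)\int_{\Om}\phi_1^2\,dx}<0,
\end{equation*}
and Theorem \ref{50}(1) then yields local asymptotic stability of the bifurcating branch for $d\in(d^*-\epsilon,d^*)$.

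To globalize, I verify the hypotheses of Theorem \ref{thm5}: take $K_1=\chi$, $F_2(v)=\vartheta v$ (so $K_2=\vartheta$), $K_3=1$ for {\bf (A5)}, and observe $H(u)\le 1/(\nu e)\le 1/\nu=:K_5$, giving {\bf (A6b)}. Theorem \ref{thm5} then produces a connected component $\Sigma_1\supset\Gamma_1$ of positive steady states whose projection onto the $d$-coordinate covers $(0,d_0)$ for some $d_0\ge d^*$. The a priori bounds in Lemma \ref{lem4}(2) read $v_d(x)\le K_5=1/\nu$ and $u_d(x)\le K_1^{-1}\max_{0\le v\le 1/\nu}\vartheta v=\vartheta/(\chi\nu)$, matching the stated bounds.

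For the non-existence claim when $d>d^*$, I imitate the adjoint test-function argument of Proposition \ref{pro2}, but with the key change that the nonlinear correction now sits in the second equation instead of the first. The adjoint eigenpair at $d^*$ is $(w,z)=(\phi_1,M\vartheta\tau\phi_1)$, which satisfies
\begin{equation*}
d^*\Delta w-\chi w+\tfrac{1}{\tau}z=0,\qquad d^*\Delta z+\vartheta w-\tfrac{1}{\tau}z=0,
\end{equation*}
with zero Dirichlet data. A positive steady state $(u_d,v_d)$ of \eqref{71a} then satisfies
\begin{equation*}
d\Delta u_d-\chi u_d+\vartheta v_d=0,\qquad d\Delta v_d+\tfrac{1}{\tau}u_d-\tfrac{1}{\tau}v_d=\tfrac{1}{\tau}u_d(1-e^{-\nu u_d})\ge 0,
\end{equation*}
with strict positivity of the last expression wherever $u_d>0$. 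Testing the first pair against $(w,z)$, the adjoint pair against $(u_d,v_d)$, subtracting, integrating by parts twice, and using $-\Delta\phi_1=\lambda_1\phi_1$ produces
\begin{equation*}
(d^*-d)\lambda_1\int_{\Om}(u_dw+v_dz)\,dx\ge\frac{1}{\tau}\int_{\Om}zu_d(1-e^{-\nu u_d})\,dx>0,
\end{equation*}
forcing $d<d^*$.

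Finally, for the conditional stability when $u_d(x)\le 1/\nu$, I verify the two structural hypotheses of \cite[Theorem 2.3]{Cui2013} for the associated cooperative sublinear system. With $G(u,v)=(1/\tau)(H(u)-v)$, cooperativity follows from $F_v=\vartheta>0$ and $G_u=(1/\tau)e^{-\nu u_d}(1-\nu u_d)\ge 0$, where the hypothesis $u_d\le 1/\nu$ is precisely what guarantees the second sign. The sublinearity identities $F-uF_u-vF_v\equiv 0$ and $G-uG_u-vG_v=(1/\tau)\nu u^2 e^{-\nu u}\ge 0$ are direct computations. The most delicate point, and the one I expect to require the most care, is tracking how the swap of nonlinearity (from $F_v$ in Proposition \ref{pro2} to $H$ here) reverses which equation carries the strict positivity: in the $d>d^*$ argument I must pair the nonlinear correction $u_d(1-e^{-\nu u_d})$ with the adjoint component $z$ rather than $w$, and in the stability argument the cooperativity threshold $1/\nu$ now applies to $u_d$ rather than $v_d$.
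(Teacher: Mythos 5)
Your proposal is correct and follows exactly the route the paper intends: the paper only states ``we can similarly prove'' Proposition \ref{pro3} by checking {\bf (A1)}--{\bf (A3)}, {\bf (A1')}, {\bf (A5)}, {\bf (A6b)} for $F(u,v)=-\chi u+\vartheta v$, $H(u)=ue^{-\nu u}$ and rerunning the proof of Proposition \ref{pro2}, and your write-up supplies precisely those details, including the two genuine adaptations (the correction term $\tfrac{1}{\tau}u_d(1-e^{-\nu u_d})$ now entering the second equation in the $d>d^*$ duality argument, and the cooperativity threshold $u_d\le 1/\nu$ coming from $G_u=\tfrac{1}{\tau}e^{-\nu u_d}(1-\nu u_d)$). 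All computations check out ($l=-2\nu$, $d'(0)<0$, and $K_5=1/\nu$ in Lemma \ref{lem4}(2) reproducing the stated bounds $u_d\le \vartheta/(\chi\nu)$, $v_d\le 1/\nu$), so this is a faithful completion of the paper's sketched proof.
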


Finally if we replace the Ricker type growth function $ue^{-\nu u}$ in \eqref{71} or \eqref{71a} by a Monod type (Holling type II) growth function $u/(A+u)$, much stronger results on the uniqueness and stability of positive steady state solution can be obtained. We use the model \eqref{71} as an example. Consider
\begin{equation}\label{79a}
  \begin{cases}
    \ds u_t(x,t)=d\Delta u(x,t)-\chi u(x,t)+ \frac{\vartheta (g\ast\ast u)(x,t)}{A+(g\ast\ast u)(x,t)},&~x\in\Omega,~t>0,\\
    u(x,t)=0,&~x\in\partial\Omega,~t>0,
  \end{cases}
\end{equation}
where $A>0$, and it is equivalent to
\begin{equation}\label{79b}
  \begin{cases}
    \ds u_t(x,t)=d\Delta u(x,t)-\chi u(x,t)+\frac{\vartheta v(x,t)}{A+ v(x,t)},&~x\in\Omega,~t>0,\\
    \ds v_t(x,t)=d\Delta v(x,t)+\frac{1}{\tau}(u(x,t)-v(x,t)),&~x\in\Omega,~t>0,\\
    u(x,t)=v(x,t)=0,&~x\in\partial\Omega,~t>0.
  \end{cases}
\end{equation}

\begin{proposition}
  \label{pro4}
  Suppose that $\chi,A,\tau>0$ and $\vartheta>\chi$, and let $d^*$ be defined as in \eqref{dstar}.
 Then system \eqref{79b} has a unique positive steady state solution $(u_d(x),v_d(x))$ for any $d\in (0,d^*)$ and has no positive steady state solution for $d>d^*$; the positive steady state $(u_d(x),v_d(x))$ satisfies $u_d(x),v_d(x)\leq \vartheta/\chi $ for all $0<d<d^*$; all positive steady state solutions of \eqref{79b} are on a curve $\Sigma_1=\{(d,u_d(\cdot),v_d(\cdot)): 0<d<d^*\}$ such that $\ds\lim_{d\to (d^*)^-}u_d(\cdot)=\lim_{d\to (d^*)^-}v_d(\cdot)=0$, and $(u_d(\cdot),v_d(\cdot))$ is globally asymptotically stable for $d\in(0,d^*)$.
\end{proposition}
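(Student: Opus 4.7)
The plan is to verify the hypotheses of the earlier theorems for the specific nonlinearities $F(u,v) = -\chi u + \vartheta v/(A+v)$ and $H(u) = u$, and then to exploit the cooperative and strictly sublinear structure of \eqref{79b} to upgrade existence to uniqueness and global stability. First I would compute $a = F_u(0,0) = -\chi < 0$, $b = F_v(0,0) = \vartheta/A > 0$, $k = H'(0) = 1$, and verify \textbf{(A1')}, \textbf{(A2)}, \textbf{(A3)}, together with \textbf{(A5)} (using $F(u,v) \le -\chi u + (\vartheta/A) v$) and \textbf{(A6a)} (using $F_2(v) = \vartheta v/(A+v) \le \vartheta =: K_4$). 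Then Theorem \ref{thm:3.1} and Theorem \ref{50} give local bifurcation and stability near $d=d^*$, while Theorem \ref{thm5} produces a connected component $\Sigma_1$ of positive steady states with $P_d \Sigma_1 = (0,d^*)$. The a priori bound $u_d, v_d \le \vartheta/\chi$ follows from the maximum principle: at an interior maximum $x_0$ of $u_d$ one has $\chi u_d(x_0) \le \vartheta v_d(x_0)/(A + v_d(x_0)) \le \vartheta$, and at an interior maximum $x_1$ of $v_d$ one has $v_d(x_1) \le u_d(x_1) \le \vartheta/\chi$. Nonexistence of positive steady states for $d > d^*$ comes from the integration-against-$\phi_1$ argument in Theorem \ref{thm5}(1), replacing the nonlinearity by its linear majorant $(\vartheta/A)v$.

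The key remaining step is uniqueness of the positive steady state for every $d \in (0,d^*)$. I would solve the second steady-state equation for $v$ through the positive inverse $v = Tu := \tau^{-1}(-d\Delta + \tau^{-1})^{-1}u$ with zero Dirichlet data, and substitute into the first equation to obtain the scalar fixed-point problem
\begin{equation*}
u = \mathcal{N}(u) := (-d\Delta + \chi)^{-1}\bigl[\vartheta\, g(Tu)\bigr], \qquad g(s) = \frac{s}{A+s}.
\end{equation*}
Both $T$ and $(-d\Delta+\chi)^{-1}$ are compact, strongly order-preserving operators on the positive cone of $C_0(\overline{\Omega})$, while $g$ is smooth, strictly increasing, strictly concave on $[0,\infty)$ with $g(0)=0$; hence $\mathcal{N}$ is a compact, strongly order-preserving, strictly sublinear cone map, i.e.\ $\mathcal{N}(t u) > t\, \mathcal{N}(u)$ for every $t \in (0,1)$ and every $u > 0$. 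The classical Krasnoselskii--Amann uniqueness theorem for fixed points of sublinear cone maps then gives at most one positive fixed point of $\mathcal{N}$ for each fixed $d$, which combined with the existence supplied by $\Sigma_1$ yields the unique positive solution $(u_d,v_d)$ for $d \in (0,d^*)$. The same concave fixed-point argument also implies non-degeneracy of the linearization at $(u_d,v_d)$, so $\Sigma_1$ is globally a smooth curve parametrized by $d \in (0,d^*)$ with $(u_d,v_d) \to (0,0)$ as $d \to (d^*)^-$.

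For global asymptotic stability, I would use that \eqref{79b} is a cooperative system because $F_v(u,v) = \vartheta A/(A+v)^2 > 0$ and $G_u(u,v) = 1/\tau > 0$, where $G(u,v) = \tau^{-1}(u-v)$, so its semiflow is order-preserving in the natural product order on $C_0(\overline{\Omega})^2$. Together with the strict sublinearity identities
\begin{equation*}
F(u,v) - u F_u(u,v) - v F_v(u,v) = \frac{\vartheta v^2}{(A+v)^2} > 0, \qquad G(u,v) - u G_u(u,v) - v G_v(u,v) = 0,
\end{equation*}
the uniform $L^\infty$ bound from part (1), and the uniqueness of the positive equilibrium, the standard theory of monotone dynamical systems for cooperative, strictly sublinear reaction--diffusion systems (as in Smith's monograph, or via a sweeping argument based on ordered sub- and supersolutions of the form $\varepsilon \phi_1 (1,M)^T$ and a large constant upper barrier) forces every positive trajectory to be squeezed between two ordered orbits both converging to $(u_d,v_d)$. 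The main obstacle will be propagating this monotone-sublinear squeeze across the entire range $d \in (0,d^*)$ rather than just a neighborhood of $d^*$; the crucial ingredients there are the strict sublinearity in the $u$-equation, which rules out any nontrivial ordered invariant set, and the construction of a pair of ordered sub/supersolutions that are attracted to $(u_d,v_d)$ under the monotone semiflow.
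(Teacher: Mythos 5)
Your proposal is correct, but it takes a genuinely different route for the core of the argument. The paper's proof is much shorter: after verifying exactly the structural facts you identify --- cooperativity ($F_v(u,v)=\vartheta A/(A+v)^2>0$, $G_u=1/\tau>0$), strict sublinearity ($F-uF_u-vF_v=\vartheta v^2/(A+v)^2>0$, $G-uG_u-vG_v=0$), and ultimate uniform boundedness of solutions --- it obtains uniqueness \emph{and} global attractivity of the positive steady state in one stroke by citing the abstract threshold-dynamics theorem for strongly monotone, strictly subhomogeneous semiflows (\cite[Theorem 2.3.2]{Zhao2017}); the remaining assertions are deferred to the proof of Proposition \ref{pro2}. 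You instead split the job: uniqueness via an elliptic reduction $u=\mathcal{N}(u)$ with $\mathcal{N}=(-d\Delta+\chi)^{-1}[\vartheta\, g(Tu)]$ and the Krasnoselskii--Amann theorem for strongly sublinear cone maps, then global stability via a hand-built monotone squeeze between $\varepsilon\phi_1(1,M)^T$ and a large constant supersolution. Your decomposition buys something the paper's one-line citation does not: the concave fixed-point argument also yields non-degeneracy of the linearization (since $g(s)>g'(s)s$ gives $\mathcal{N}'(u_d)u_d<u_d$, hence spectral radius $<1$), so you recover the smooth-curve parametrization of $\Sigma_1$ directly, in the spirit of Theorem \ref{thm4}, rather than inferring it from global attractivity; the paper's route, conversely, is shorter and gives attraction of \emph{all} positive trajectories without constructing barriers. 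The ``obstacle'' you worry about at the end is not actually one: for every $d\in(0,d^*)$ the principal eigenvalue of the linearization at $(0,0)$ is positive, so the sub/supersolution pair exists uniformly on that range.

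Two minor points. First, strong order-preservation fails on the positive cone of $C_0(\overline{\Omega})$, which has empty interior under Dirichlet conditions; the standard fix is to run your fixed-point and monotonicity arguments in $C_0^1(\overline{\Omega})$ (or with the $\phi_1$-weighted order), where the resolvents $(-d\Delta+\chi)^{-1}$ and $T$ are strongly positive by the Hopf lemma. Second, your computation $b=F_v(0,0)=\vartheta/A$ is the correct one, and it makes the bifurcation threshold
\begin{equation*}
\frac{1}{2\lambda_1\tau}\left(-\chi\tau-1+\sqrt{(-\chi\tau+1)^2+4(\vartheta/A)\tau}\right),
\end{equation*}
with {\bf (A3)} reading $\vartheta>\chi A$; this agrees with the $d^*$ of \eqref{dstar} quoted in the statement only when $A=1$, so your blind verification in fact exposes an apparent slip in the proposition's statement rather than a flaw in your argument.
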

\begin{proof}
We only prove the uniqueness and global stability of positive steady state solution as the other parts can be proved in a similar way as the proof of Proposition \ref{pro2}. Indeed in this case, the system \eqref{79b} is cooperative as $F_v(u,v)=\ds\frac{\vartheta A}{(A+v)^2}>0$ and $G_u(u,v)=1/\tau>0$, so the solutions of \eqref{79b} generate a semi-flow which is strongly monotone. The system \eqref{79b} is also sublinear (sub-homogeneous) as
\begin{equation*}
F(u,v)-uF_u(u,v)-vF_v(u,v)=\vartheta \ds\frac{v^2}{(A+v)^2}>0,\;\;
G(u,v)-uG_u(u,v)-vG_v(u,v)=0.
\end{equation*}
It is also easy to show the solutions of \eqref{79b} are ultimately uniformly bounded. Therefore from \cite[Theorem 2.3.2]{Zhao2017}, \eqref{79b} has a unique positive steady state that is globally attractive.
\end{proof}

\noindent{\bf Acknowledgements}
\small{This work was completed when the first author visited William \& Mary in 2015-2016, and she would like to thank W\&M for warm hospitality.}

\bibliographystyle{plain}
\bibliography{bifurcation-nonlocal,spatialtemporal}

\end{document}